\def\R{{\mathbb R}}
\def\N{{\mathbb N}}
\def\C{{\mathbb C}}
\def\ds{\displaystyle}
\def\div{\operatorname{div}\,}
\newcommand{\norm}[1]{\left\Vert#1\right\Vert}
\renewcommand{\leq}{\leqslant}
\renewcommand{\geq}{\geqslant}
\newtheorem{theorem}{Theorem}[section]
\newtheorem{proposition}[theorem]{Proposition}
\newtheorem{lemma}[theorem]{Lemma}
\newtheorem{remark}[theorem]{Remark}
\numberwithin{equation}{section}
\title{Switching controls for analytic semigroups and applications to parabolic systems\footnote{This project has been partially supported by the program CAPES/MATH-AMSUD under the project ACIPDE number: 8881.368715/2019-01.  The first author was also partially supported by CNPq-Brazil by the Grant 2019/0014 of Para\'iba State Research Foundation (FAPESQ-PB). 
The second author has been also supported by the Agence Nationale de la Recherche, Project IFSMACS, grant ANR-15-CE40-0010, and by the CIMI Labex, Toulouse, France, under grant ANR-11-LABX-0040-CIMI. The third author was partially supported by CNPq by the grant $313148$/$2017$-1 (Brazil), by Propesq - UFPE edital Qualis A (Brazil) and by grant MTM$2016$-$76990$-P, Ministry of Economy and Competitiveness (Spain).}}
\author{
	Felipe Wallison Chaves Silva\footnote{Department of Mathematics, Federal University of Para\'iba, UFPB, CEP 58050-085, Jo\~ao Pessoa-PB, Brazil.
E-mail: {\tt fchaves@mat.ufpb.br}.}
	\and
	Sylvain Ervedoza\footnote{Institut de Math\'ematiques de Bordeaux UMR 5251, Universit\'e de Bordeaux, Bordeaux INP, CNRS, F-33400 Talence, France. E-mail: {\tt sylvain.ervedoza@math.u-bordeaux.fr}}
	\and 
	Diego Araujo de Souza\footnote{Department of Mathematics, Federal University of Pernambuco, UFPE, CEP 50740-545, Recife, PE, Brazil.
E-mail: {\tt diego.souza@dmat.ufpe.br}}
}
\date\today
\begin{document}
\maketitle
\begin{abstract}
	In this work, we push further the analysis of the problem of switching controls proposed in \cite{ZuaSwitch}. The problem consists in the following one: assuming that one can control a system using two or more actuators, does there exist a control strategy such that at all times, only one actuator is active? We answer positively to this question when the controlled system corresponds to an analytic semigroup spanned by a positive self-adjoint operator which is null-controllable in arbitrary small times. Similarly as in \cite{ZuaSwitch}, our proof relies on analyticity arguments and will also work in finite dimensional setting and under some further spectral assumptions when the operator spans an analytic semigroup but is not necessarily self-adjoint.
\end{abstract}
%
%
%
%
%
%
%
%
%
%
\section{Introduction}
%
%
%

%
\paragraph{Settings and main results.}
In this article, we are interested in the following system:
\begin{equation}
	\label{Main-Eq-u}
	y' + A y = B u, \quad t \in (0,T), \qquad y(0) = y_0 \in H. 
\end{equation}
Here, $y$ is the state variable, assumed to belong to an Hilbert space $H$, $'$ denotes the time derivative and $A$ describes the free dynamics and $-A$ generates a $C^0$ semigroup. The function $u$ is the control, acting on the system through the control operator $B$, which we assumed to be in $\mathscr{L}(U,H)$, where $U$ is an Hilbert space, and $u$ will be searched in the space $L^2(0,T; U)$,  {with $T>0$}. 

Controllability of systems of the form \eqref{Main-Eq-u} have been analyzed thoroughly in many works. We do not intend to give an exhaustive account of the theory, and we simply refer to the textbook \cite{TWBook}.

Here, we focus on the case where $U$ can be identified to $U_1 \times U_2$ through an isomorphism, \emph{i.e.}
\begin{equation}
	\label{Link-U1-2-U}
	\hbox{there exists a linear isomorphism } \pi : U_1 \times U_2 \to U, 
\end{equation}
so that we can associate to $B \in \mathscr{L}(U,H)$ two operators $B_1 \in \mathscr{L}(U_1, H)$ and $B_2 \in \mathscr{L}(U_2, H)$ such that 
\begin{equation}
	\label{Def-B1-B2}
	\forall (u_1, u_2) \in U_1 \times U_2,  \quad B \pi (u_1, u_2) = B_1 u_1 + B_2 u_2.
\end{equation}
The control problem \eqref{Main-Eq-u} can then be rewritten as 
\begin{equation}
	\label{Main-Eq-u-1-2}
	y' + A y = B_1 u_1+ B_2 u_2,  \quad { t \in (0,T)} , \qquad y(0) = y_0, 
\end{equation}
with $u_1 \in L^2(0,T; U_1)$ and $u_2 \in L^2(0,T; U_2)$.

The question we are interested in is the possibility to construct \emph{switching controls}, that is,  controls $u_1 \in L^2(0,T; U_1)$ and $u_2 \in L^2(0,T; U_2)$ such that
\begin{equation}
	\label{Def-Switching-Controls}
	\hbox{a.e. in } t \in (0,T), \quad	\| u_1(t) \|_{U_1} \| u_2(t)\|_{U_2} = 0.  
\end{equation}
Informally, this means that at each time $t$, only one control is active. 

Of course, one cannot expect to have better controllability properties for \eqref{Main-Eq-u-1-2} under the condition \eqref{Def-Switching-Controls} than for the general case \eqref{Main-Eq-u}. We shall thus assume some controllability properties for \eqref{Main-Eq-u} and discuss what can be obtained for the control problem \eqref{Main-Eq-u-1-2} under the condition \eqref{Def-Switching-Controls}.

More precisely, we will assume that the system \eqref{Main-Eq-u} is null-controllable in arbitrary small times, \emph{ i.e.} for all $T>0$, there exists a constant $C_{T}$ such that for all $y_0 \in H$, there exists $u \in L^2(0,T; U)$ such that the solution $y$ of \eqref{Main-Eq-u} satisfies 
\begin{equation}
	\label{Null-Cont-Req}
	y(T) = 0,  
\end{equation}
and 
\begin{equation}
	\label{Bound-Control}
	\norm{u}_{L^2(0,T; U)} \leq C_T \norm{y_0}_H.
\end{equation}

In fact, we will rather use the following equivalent observability property (see \emph{e.g.} \cite[Theorem 11.2.1]{TWBook}): For all $T>0$, there exists $C_T$ such that for all $z_T \in H$, the solution $z$ of 
\begin{equation}
	\label{Adj-Eq}
	- z' + A^* z = 0, \quad t \in (0,T), \qquad z(T) = z_T \in H, 
\end{equation}
satisfies
\begin{equation}
	\label{Obs-T=0}
	\| z(0) \|_H \leq C_T \| B^* z \|_{L^2(0,T; U)}.
\end{equation}

Our goal then is to show the following result: 
\begin{theorem}
	\label{Thm-Main}
	Let us assume one of the two conditions: 
	\begin{itemize}
		\item $A: \mathscr{D}(A) \subset H \to H$ is a self-adjoint positive definite operator with compact resolvent, $H$ being a Hilbert space; 
		\item $H$ is a finite dimensional vector space.
	\end{itemize}
	Let $B \in \mathscr{L}(U,H)$, where $U$ is an Hilbert space, and assume that $U$ is isomorphic to $U_1 \times U_2$ for some Hilbert spaces $U_1$ and $U_2$, and define $B_1$ and $B_2$ as in \eqref{Def-B1-B2}.

	We also assume that system \eqref{Main-Eq-u} is null-controllable in arbitrary small times. 

	Then system \eqref{Main-Eq-u-1-2} is null-controllable in arbitrary small times with switching controls, \emph{i.e.} satisfying \eqref{Def-Switching-Controls}. To be more precise, given any $T>0$ and any $y_0 \in H$, there exist control functions $u_1 \in L^2(0,T; U_1)$ and $u_2 \in L^2(0,T; U_2)$ such that the solution $y$ of \eqref{Main-Eq-u-1-2} satisfies \eqref{Null-Cont-Req} while the control functions satisfy the switching condition \eqref{Def-Switching-Controls}.
\end{theorem}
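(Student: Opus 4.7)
The natural strategy, following the HUM philosophy of \cite{ZuaSwitch}, is to build the switching controls as extremals of a non-smooth convex functional. I would set
\begin{equation*}
J(z_T) = \frac{1}{2}\int_0^T \max\bigl(\|B_1^* z(t)\|_{U_1}^2,\, \|B_2^* z(t)\|_{U_2}^2\bigr)\, dt + \langle z(0), y_0 \rangle_H,
\end{equation*}
where $z$ solves the adjoint system \eqref{Adj-Eq} with final datum $z_T$. Since $\max(a^2, b^2) \geq \tfrac{1}{2}(a^2+b^2)$ and $\pi$ is a linear isomorphism, one has $\max(\|B_1^* z\|_{U_1}^2, \|B_2^* z\|_{U_2}^2) \geq c \|B^* z\|_U^2$ for some $c>0$, so the assumed observability \eqref{Obs-T=0} makes $J$ coercive. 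The functional is convex and continuous, hence by the direct method admits a minimizer $\hat z_T \in H$, with corresponding adjoint trajectory $\hat z$.

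I would then partition $(0,T)$ into $\omega_1 = \{t : \|B_1^* \hat z\|_{U_1} > \|B_2^* \hat z\|_{U_2}\}$, $\omega_2 = \{t : \|B_2^* \hat z\|_{U_2} > \|B_1^* \hat z\|_{U_1}\}$, and the \emph{tie set} $\omega_3 = \{t : \|B_1^* \hat z\|_{U_1} = \|B_2^* \hat z\|_{U_2}\}$. The integrand of $J$ has single-valued subdifferential on $\omega_1 \cup \omega_2$, so that, if $\omega_3$ has Lebesgue measure zero, the optimality condition $0 \in \partial J(\hat z_T)$ simplifies to
\begin{equation*}
\int_{\omega_1} \langle B_1^* \hat z, B_1^* \tilde z\rangle_{U_1}\, dt + \int_{\omega_2} \langle B_2^* \hat z, B_2^* \tilde z\rangle_{U_2}\, dt + \langle \tilde z(0), y_0\rangle_H = 0 \quad \text{for all } \tilde z_T \in H,
\end{equation*}
which is precisely the weak formulation of the null-controllability of \eqref{Main-Eq-u-1-2} with controls $u_1 = \mathbf{1}_{\omega_1} B_1^* \hat z$ and $u_2 = \mathbf{1}_{\omega_2} B_2^* \hat z$; these satisfy the switching condition \eqref{Def-Switching-Controls} by construction.

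The crux is therefore to show that $\omega_3$ is negligible, and this is where the analyticity of the semigroup is essential. Under either hypothesis of the theorem the map $t \mapsto \hat z(t) = e^{-(T-t)A^*}\hat z_T$ is real-analytic with values in $H$, hence $t \mapsto \|B_1^* \hat z\|_{U_1}^2 - \|B_2^* \hat z\|_{U_2}^2$ is a real-analytic function on $(0,T)$, either identically zero or with only isolated zeros. The case $y_0 = 0$ is trivial; for $y_0 \neq 0$, testing $J$ at $-\epsilon y_0$ shows $J(-\epsilon y_0) < 0$ for small $\epsilon$, hence $\hat z_T \neq 0$, and backward uniqueness forces $\hat z(t) \neq 0$ for every $t \leq T$. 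The main difficulty I anticipate is ruling out the degenerate identity $\|B_1^* \hat z\|_{U_1} \equiv \|B_2^* \hat z\|_{U_2}$ on all of $(0,T)$. In the self-adjoint case, I would first try a spectral expansion of $\hat z_T$ in eigenmodes of $A^*$, turning the identity into an over-determined generalized Dirichlet series in $T-t$ whose coefficient identification yields strong constraints linking $B_1$, $B_2$ and the spectral data of $\hat z_T$; if this does not close the argument for arbitrary $B_1, B_2$, a robust fallback is to introduce a symmetry-breaking perturbation $J_\epsilon$ of $J$ (adding, say, an asymmetric term $\epsilon\psi(t)$ inside one argument of the $\max$) whose minimizers manifestly produce switching controls, and then to pass to the limit $\epsilon \to 0^+$ via uniform bounds deduced from coercivity of $J$.
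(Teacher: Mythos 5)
Your overall architecture (minimize a $\max$-type functional over adjoint data, show the tie set is negligible, read off switching controls from the Euler--Lagrange equation) is the paper's architecture, but you have omitted the single ingredient that makes the tie-set step work: the paper's functional is not $\max\{\|B_1^*z\|_{U_1}^2,\|B_2^*z\|_{U_2}^2\}$ but $\max\{\|B_1^*z\|_{U_1}^2,\alpha(t)\|B_2^*z\|_{U_2}^2\}$ with the analytic oscillating weight $\alpha(t)=1+\tfrac12\sin(\omega t)$. Without such a weight the degenerate identity you flag as ``the main difficulty'' genuinely occurs for nontrivial minimizers: take $B_1=B_2$ with $(A,B_1)$ null-controllable; then $\|B_1^*\hat z\|\equiv\|B_2^*\hat z\|$ for \emph{every} adjoint trajectory, the tie set is all of $(0,T)$, and your construction produces nothing. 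Your spectral-expansion idea is in the right spirit but, applied to your unweighted functional, coefficient identification on the dominant mode $w_{k_0}$ only yields the single scalar relation $\|B_1^*w_{k_0}\|^2=\|B_2^*w_{k_0}\|^2$, which does not force $w_{k_0}$ out of the picture. The paper's fix is exactly a symmetry-breaking device, but a \emph{fixed} one, not one sent to zero: extending $Z$ analytically to $(-\infty,T_n)$ and letting $t\to-\infty$, the identity $\|B_1^*Z(t)\|^2=\alpha(t)\|B_2^*Z(t)\|^2$ forces $\|B_1^*w_{k_0}\|^2=\alpha_\infty\|B_2^*w_{k_0}\|^2$ for every accumulation value $\alpha_\infty$ of $\alpha$ at $-\infty$; since that accumulation set is the nontrivial interval $[1/2,3/2]$, both $B_1^*w_{k_0}$ and $B_2^*w_{k_0}$ must vanish, a contradiction. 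Your fallback of adding $\epsilon\psi(t)$ and letting $\epsilon\to0^+$ is the wrong move: the limit can restore the degeneracy and destroy the switching structure, whereas keeping the weight costs nothing --- it merely rescales $u_2$ by the bounded factor $\alpha(t)$ and the controls remain admissible for the original problem.

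A second, more routine gap: your coercivity claim on $H$ is false in the self-adjoint infinite-dimensional case. Observability gives $\|z(0)\|_H\leq C_T\|B^*z\|_{L^2(0,T;U)}$, not a bound on $\|z_T\|_H$; for the heat equation the observation norm on $z_T$ is much weaker than the $H$-norm, so $J$ is not coercive on $H$ and the minimizer must be sought in the completion $X$ of $H$ for the observation norm (as the paper does). This forces extra care in the tie-set analysis --- one works on subintervals $(0,T_n)$ with $T_n\uparrow T$, where $Z(T_n)\in H$ by the smoothing/observability estimate, before extending analytically to $(-\infty,T_n)$. Finally, note that analyticity of $t\mapsto\hat z(t)$ plus backward uniqueness ($\hat z(t)\neq0$) does not by itself exclude the identically-tied case; that exclusion is precisely what the $\alpha$-argument above supplies.
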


The proof of Theorem \ref{Thm-Main} is given in Section \ref{Sec-Proof-Main}. It is strongly inspired by the work \cite{ZuaSwitch} and revisits two ideas which are already presented there but that we exploit further. Indeed, to construct the controls $u_1$ and $u_2$, 
 we minimize, for $z_T \in H$, the functional 
\begin{equation}
	\label{Def-J-H}
	J(z_T) = \frac{1}{2} \int_0^T \max \{ \| B_1^* z(t)\|_{U_1}^2, \alpha(t) \| B_2^* z(t)\|_{U_2}^2 \} \, dt + \langle y_0, z(0) \rangle_H, 
\end{equation}
where $z$ is the solution of the adjoint problem \eqref{Adj-Eq}, and $\alpha = \alpha(t)$ is given by 
\begin{equation}
	\label{Example-Alpha}
	\alpha(t) = 1+ \frac{1}{2} \sin(\omega t), \quad t \in \R, 
\end{equation}
where $\omega \in \R^*$ is suitably chosen.

Similarly as in \cite{ZuaSwitch}, the main difficulty will be to guarantee that for any minimizer $Z_T$ of $J$ (in a suitable class to be defined later), the set $\{ t \in (0,T), \, \| B_1^* Z(t) \|_{U_1}^2 = \alpha (t) \| B_2^* Z(t)\|_{U_2}^2 \}$ is of measure zero, thus guaranteeing the switching structure of the controls provided that way, or corresponds to the straightforward case $Z_T = 0$, see Section \ref{Sec-Proof-Main} for more details. 

As it turns out, this property will strongly use the analyticity of the semigroup of generator $-A^*$. But more than that, we will strongly use the fact that $\alpha$ is analytic and oscillates at infinity and therefore no resonances effect preventing from the switching structure \eqref{Def-Switching-Controls} can arise. 

Before going further, let us remark that the work \cite{ZuaSwitch} proposes a similar strategy, see \cite[p.94 - 95 and Theorem 2.2]{ZuaSwitch}, but did not manage {to conclude that  the set $\{ t \in (0,T), \, \| B_1^* Z(t) \|_{U_1}^2 = \alpha (t) \| B_2^* Z(t)\|_{U_2}^2 \}$ is either of zero measure or corresponds to the trivial case $Z_T = 0$ in the general set up we propose: There,} only the finite dimensional case was considered and it was assumed  that $B_1$ and $B_2$ were scalar (\emph{i.e.} $U_1 = U_2 = \R$) and that $(A, B_1 - \alpha_- B_2)$ and $(A, B_1 + \alpha_+ B_2)$ satisfy Kalman rank conditions for some $\alpha_-$ and $\alpha_+$ in the accumulation sets of $\alpha$ at $-\infty$ and $+\infty$  respectively. Some extensions were given in some particular infinite dimensional settings, but under strong spectral assumptions. 

Here, our arguments avoid these strong spectral requirements by fully using the analytic function $\alpha = \alpha(t)$ in \eqref{Def-J-H} and the fact, that for $\alpha$ of the form \eqref{Example-Alpha},  {the set of}  accumulation points at $-\infty$ is a non-trivial interval.

\begin{remark}
	Let us also point out that this result is easy to obtain in finite dimensional settings, as it was mentioned to us by Marius Tucsnak, who we hereby thank. Indeed, when $H$ is of finite dimension, it is easy to check that for all $T>0$, for any $i \in \{1, 2\}$, considering any non-empty open time interval $I_i$, the set $R_i(I_i)$ defined by 
	$$
		R_i(I_i) = \left\{ \int_0^T e^{ -(T -s) A} B_i {\bf 1}_{I_i}(s) u_i(s) \, ds, \, \text{ with } u_i \in L^2(0,T) \right\}, 
	$$
	\emph{i.e.} the reachable set for \eqref{Main-Eq-u-1-2} at time $T$ starting from $y_0 = 0$ and with controls $u_i$ acting only in the time interval $I_i$ (${\bf 1}_{I_i} $ is the indicator function of the interval $I_i$), the other control being  {null}, equals to the set $\mathcal{R}_i$ defined by
	$$
	 	\mathcal{R}_i = \hbox{Ran\,} (B_i, AB_i, \cdots, A^{d-1} B_i ), 
	$$
	where $d$ is the dimension of the space $H$. In particular, $R_i(I_i)$ is independent of the choice of the time interval $I_i$. 
	
	Recall then that if  $H$ {is a finite dimensional space} of dimension $d$ and system \eqref{Main-Eq-u} is controllable, the Kalman rank condition is satisfied, \emph{i.e.} $ \text{Ran\,} (B, AB, \cdots, A^{n-1} B ) = \R^d$, so that by construction (recall \eqref{Def-B1-B2}) $\mathcal{R}_1 + \mathcal{R}_2 = \R^d$. 
	
	Therefore, using the above comments, given any initial datum $y_0 \in H$ and non-empty open time sub-intervals $I_1$ and $I_2$ of $(0,T)$, there exists controls $u_1 \in L^2(0,T; U_1)$ and $u_2 \in L^2(0,T; U_2)$ such that the solution $y$ of \eqref{Main-Eq-u-1-2} satisfies \eqref{Null-Cont-Req} while $u_1$ is supported in $I_1$ and $u_2$ is supported in $I_2$.
	
	Even if this is a stronger statement than the one of Theorem \ref{Thm-Main} in the case of finite dimension, our approach has the advantage of building a strategy which naturally constructs switching controls, optimizing the choice of the switching times, while the above result would give switching structures through a priori choices of the supports of the controls $u_1$ and $u_2$.
\end{remark}

In fact, our proofs can be adapted to the case of more than $2$ control operators and to unbounded control operators $B \in \mathscr{L}(U, \mathscr{D}(A^*)')$. Assume that $U$ is isomorphic to $U_1 \times \cdots \times U_n$ for some $n \in \N^*$ satisfying $n \geq 2$, \emph{i.e.} 
\begin{equation}
	\label{Link-U1-2-3-U}
	\hbox{there exists a linear isomorphism } \pi : U_1 \times \cdots \times U_n \to U, 
\end{equation}
so that we can associate to $B \in  \mathscr{L}(U, \mathscr{D}(A^*)')$ $n$ operators $B_i \in \mathscr{L}(U_i, \mathscr{D}(A^*)')$, $i \in \{1, \cdots,n\}$, by the formula
\begin{equation}
	\label{Def-B1-B2-B3}
	\forall (u_1, \cdots, u_n) \in U_1 \times \cdots \times U_n,  \quad B \pi (u_1, \cdots, u_n) = \sum_{i = 1}^n B_i u_i.
\end{equation}
When having $n$ controls $u_i \in L^2(0,T; U_i)$, the interesting notion of \emph{switching control} is then the following: 
\begin{equation}
	\label{Def-Switching-3}
	\hbox{a.e. in } t \in (0,T), \,
				\ds 
				\quad
				\prod_{i =1}^n \left( \sum_{j \neq i} \| u_j(t)\|_{U_j} \right) = 0. 
\end{equation}
In other words, we shall say that the controls $(u_1, \cdots, u_n) \in L^2(0,T; U_1 \times \cdots \times U_n)$ are switching if  almost everywhere in $t \in (0,T)$, {at most} one control is active. 

We then claim that Theorem \ref{Thm-Main} can be generalized to this case: 
\begin{theorem}
	\label{Thm-Main-Gal}
	Let us assume one of the two conditions: 
	\begin{itemize}
		\item $A: \mathscr{D}(A) \subset H \to H$ is a self-adjoint positive definite operator with compact resolvent, $H$ being an Hilbert space; 			
		\item $H$ is a finite dimensional vector space.
	\end{itemize}
	Let $B \in \mathscr{L}(U,\mathscr{D}(A^*)')$, where $U$ is an Hilbert space, let $n \in \N$ with $n \geq 2$, and assume that $U$ is isomorphic to $U_1 \times \cdots \times U_n$ for some Hilbert spaces $U_i$, $i \in \{1, \cdots, n\}$, and define $B_i$ for $i \in \{1, \cdots, n\}$ as in \eqref{Def-B1-B2-B3}.

	We assume that system \eqref{Main-Eq-u} is null-controllable in arbitrary small times.

	Then the system 
	\begin{equation}
		\label{Main-Eq-u-1-2-3} 
		y' + A y = \sum_{i = 1}^n B_i u_i, \quad t \in (0,T), \qquad y(0) = y_0,  
	\end{equation}
	is null-controllable in arbitrary small times with switching controls, \emph{i.e.} {controls} satisfying \eqref{Def-Switching-3}. To be more precise, given any $T>0$ and any $y_0 \in H$, there exist $n$ control functions $u_i \in L^2(0,T; U_i)$, $i \in \{1, \cdots, n\}$, such that the solution $y$ of \eqref{Main-Eq-u-1-2-3} satisfies \eqref{Null-Cont-Req} while the control functions satisfy the switching condition \eqref{Def-Switching-3}.
\end{theorem}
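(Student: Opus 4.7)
The plan is to extend the strategy underlying the proof of Theorem \ref{Thm-Main} by replacing the binary maximum in \eqref{Def-J-H} by an $n$-fold maximum, each term weighted by an oscillating analytic coefficient with its own frequency. Concretely, I would choose pairwise distinct positive frequencies $\omega_1, \ldots, \omega_n$ (so that also $\omega_i \neq -\omega_j$ for $i \neq j$, which is automatic), set $\alpha_i(t) = 1 + \frac{1}{2}\sin(\omega_i t)$, and define
\[
  J(z_T) = \frac{1}{2} \int_0^T \max_{1 \leq i \leq n} \alpha_i(t) \|B_i^* z(t)\|_{U_i}^2 \, dt + \langle y_0, z(0)\rangle_H,
\]
where $z$ solves the adjoint equation \eqref{Adj-Eq}. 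The natural functional setting is the Hilbert space $X$ obtained by completing $\mathscr{D}(A^*)$ under the observation norm $\|z_T\|_X^2 = \int_0^T \|B^* z(t)\|_U^2 \, dt$.

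Since $\pi$ is an isomorphism and $\alpha_i \geq 1/2$, one has
\[
  \int_0^T \max_{1 \leq i \leq n} \alpha_i(t) \|B_i^* z\|_{U_i}^2 \, dt \geq \frac{1}{2n} \int_0^T \sum_{i=1}^n \|B_i^* z\|_{U_i}^2 \, dt \geq c\, \|z_T\|_X^2
\]
for some $c>0$, which combined with the observability \eqref{Obs-T=0} yields coercivity of $J$ on $X$. Together with the obvious convexity and continuity, the direct method provides a minimizer $Z_T \in X$. The subdifferential calculus for the pointwise maximum then supplies measurable nonnegative weights $\xi_i(t)$ with $\sum_{i=1}^n \xi_i(t) = 1$ and $\xi_i(t) > 0 \Rightarrow i \in I^*(t) := \{k : \alpha_k(t)\|B_k^* Z(t)\|_{U_k}^2 = \max_j \alpha_j(t)\|B_j^* Z(t)\|_{U_j}^2\}$, in such a way that the controls $u_i(t) := \xi_i(t)\alpha_i(t) B_i^* Z(t)$ solve \eqref{Main-Eq-u-1-2-3} with $y(T) = 0$.

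The switching condition \eqref{Def-Switching-3} amounts to $|I^*(t)| \leq 1$ for almost every $t$ lying in the set where some $B_k^* Z$ is nonzero, so it is enough to show that for each pair $i \neq j$ the set
\[
  E_{ij} := \{t \in (0,T) : \alpha_i(t)\|B_i^* Z(t)\|_{U_i}^2 = \alpha_j(t)\|B_j^* Z(t)\|_{U_j}^2 > 0\}
\]
has Lebesgue measure zero. By analyticity of the semigroup $e^{-tA^*}$ and of each $\alpha_i$, the function $g_{ij}(t) := \alpha_i(t)\|B_i^* Z(t)\|_{U_i}^2 - \alpha_j(t)\|B_j^* Z(t)\|_{U_j}^2$ is real analytic on $(0,T)$, and in the self-adjoint case the spectral expansion $Z(t) = \sum_k c_k e^{-(T-t)\lambda_k}\phi_k$ extends this analyticity to $(-\infty, T)$. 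If $|E_{ij}| > 0$, then $g_{ij} \equiv 0$ on that interval. Writing $\alpha_i(t) = 1 + \frac{1}{4\i}(e^{\i\omega_i t} - e^{-\i\omega_i t})$, the identity $g_{ij} \equiv 0$ becomes a vanishing generalized Dirichlet series whose exponents split into a real family $\lambda_k + \lambda_l$, contributed on both sides, and two complex families $(\lambda_k+\lambda_l) \pm \i\omega_i$ and $(\lambda_k+\lambda_l) \pm \i\omega_j$, contributed only by the $i$-side and the $j$-side respectively. Since $\omega_i \neq \pm \omega_j$ and both are nonzero, these two complex families are disjoint, so linear independence of exponentials with distinct exponents forces the $\pm\i\omega_i$ coefficients to vanish identically, which amounts to $\|B_i^* Z(t)\|_{U_i}^2 \equiv 0$ on $(0,T)$; symmetrically $\|B_j^* Z(t)\|_{U_j}^2 \equiv 0$, contradicting the strict positivity in the definition of $E_{ij}$.

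The main obstacle I anticipate is making this linear-independence step rigorous when the spectrum $\{\lambda_k\}$ of $A$ is infinite and accumulates at $+\infty$, so that the expansion of $g_{ij}$ is an a priori infinite series of complex exponentials. Two viable routes are: working with the Laplace transform of $g_{ij}$ on $(-\infty, T)$, whose singular support on each vertical line must coincide on both sides, ruling out a rigid translation by $\i\omega_i$ versus $\i\omega_j$; or truncating the eigen-expansion of $Z$ to finitely many modes, exploiting the exponential decay of the remainder as $T-t \to +\infty$, and passing to the limit. In the finite-dimensional case this difficulty disappears since the series is finite (with polynomial factors in the presence of Jordan blocks) and separating imaginary shifts is elementary. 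Once this analytic obstruction is cleared, measurability of the $\xi_i$, verification of \eqref{Null-Cont-Req}, and the switching property \eqref{Def-Switching-3} all follow exactly as in the proof of Theorem \ref{Thm-Main}.
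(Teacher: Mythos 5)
Your overall strategy---the $n$-fold maximum weighted by $\alpha_i(t)=1+\tfrac12\sin(\omega_i t)$, coercivity on the completion under the observation norm, existence of a minimizer, and reduction of the switching property to showing that each pairwise coincidence set $E_{ij}$ is null---is exactly the paper's, and your device of building strict positivity into the definition of $E_{ij}$ correctly handles the fact that $B_i^*Z\equiv 0$ and $B_j^*Z\equiv 0$ for a single pair does not force $Z_T=0$ (the paper deals with this by a separate case distinction at the end of Section \ref{Sec-Thm-Gal}). The gap is precisely the step you flag, and it is the heart of the matter. In the self-adjoint case the paper does \emph{not} separate all exponents of the (infinite) Dirichlet series. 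It works on $(0,T_n)$ with $T_n\uparrow T$ so that $Z(T_n)\in H$, isolates the smallest eigenvalue $\lambda_{k_0}$ whose eigencomponent is observed, shows the remainder is smaller by a factor $e^{(\lambda_{k_0+1}-\lambda_{k_0})(t-T_n)}$, and lets $t\to-\infty$: the resulting relation $\alpha_i(t)\|B_i^*w_{k_0}\|_{U_i}^2-\alpha_j(t)\|B_j^*w_{k_0}\|_{U_j}^2\to 0$, together with the fact that the accumulation set of $(\alpha_i,\alpha_j)$ at $-\infty$ is not contained in a line through the origin (this is where $\omega_i\neq\omega_j$ enters, via the non-trivial interval of accumulation values of $\alpha_i/\alpha_j$), forces $B_i^*w_{k_0}=B_j^*w_{k_0}=0$, contradicting the choice of $k_0$. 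This is your ``truncation'' route carried out in full; as written, your proposal leaves it as an announced obstacle, and the Laplace-transform alternative is not developed anywhere and would need its own justification.

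In the finite-dimensional case your frequency condition is genuinely insufficient, not just unverified. When $A$ has complex eigenvalues sharing a real part, the dominant group of $g_{ij}$ is a trigonometric polynomial whose frequencies are $0$, $\pm\omega_i$, $\pm\omega_j$, $\delta$, $\pm\omega_i+\delta$, $\pm\omega_j+\delta$, where $\delta$ runs over the differences $\Im(\lambda_k)-\Im(\lambda_{k_1})$ of eigenvalues with equal real parts. Pairwise distinctness of positive $\omega_i$'s does not exclude collisions such as $\omega_i=\delta$, $2\omega_i=\delta$ or $\omega_i=\omega_j+\delta$, and when these occur the coefficients from the $i$-side and the $j$-side mix, so ``linear independence of exponentials with distinct exponents'' no longer isolates them. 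This is exactly why the paper chooses the frequencies inductively, $\omega_1=0$, $\omega_2\in\R\setminus W$, $\omega_3\in\R\setminus W_3$ with $W_3$ depending on $\omega_2$, see \eqref{Def-W} and \eqref{Def-W-3}. Two further points you would need: because of Jordan blocks and sub-dominant terms, the dominant trigonometric polynomial does not vanish identically but only decays like $C/(T-t)$ as $t\to-\infty$, so the relevant tool is the quantitative statement that a finite exponential sum tending to $0$ at infinity has all coefficients zero (Lemma \ref{Lem-a-j=0}), not independence on an interval; and it suffices to extract the coefficients of the constant term and of $e^{i\omega_j t}$ to contradict the definition of the dominant set, which is why the resonance set to be avoided is finite and explicit.
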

The proof of Theorem \ref{Thm-Main-Gal} is given in Section \ref{Sec-Thm-Gal} and follows the same steps as the one of Theorem \ref{Thm-Main}.

We will then give several examples of applications, in particular regarding general parabolic systems and Stokes problem, in Section \ref{Sec-Example}.
We shall also explain under which assumptions Theorems \ref{Thm-Main} and \ref{Thm-Main-Gal} can be extended {to non-self adjoint operators $A$} with compact resolvent which generates an analytic semigroup, see Section \ref{Sec-Extensions} and Theorem \ref{Thm-Ext}. However, it is important to point out immediately that the assumptions required to deal with non-self adjoint operators seem quite delicate to check in practice, as we will explain in two examples, due to the possible complexity of the spectrum in those cases. 

\paragraph{Related results.}

As said above, this work is strongly related to the work \cite{ZuaSwitch}, which triggered our analysis. But more generally, it is related to the common idea that minimizing $\ell^1$ norms enforces sparsity. This idea has been developed thoroughly in the context of optimal control, see e.g. \cite{Alt-Schneider,Ito-Kunisch-2014, Kalise-Kunisch-Rao-2017,Kalise-Kunisch-Rao-2018} and references therein.

As we will se later in the examples described in Section \ref{Sec-Example}, when considering parabolic systems or Stokes problem, Theorem \ref{Thm-Main-Gal} will easily provide controllability results with controls having at each time {at most} one active component. This is in sharp contrast with the questions addressed for parabolic systems or Stokes models when the control can act on only one component, in which the controllability properties can be strongly modified depending on the geometry of the domains or the time of controllability, see e.g. \cite{Ammar-Khodja-et-al-Survey-2011,Ammar-Khodja-et-al-2016,DuprezLissy} and the references therein, while the use of non-linear terms may help to re-establish control properties, see e.g. the works \cite{Chowdhury-Erv-2019, Coron-Guerrero-09,Coron-Lissy}. In other words, the notion that we are analyzing in this context truly lies in between the notions of controllability with controls acting on all components and controllability with controls acting only on one component.

\paragraph{Acknowledgements.} We deeply thank Franck Boyer for his encouragements regarding this work, and we thank him especially for having pointed out to us the example \eqref{Ex-Boyer} provided in Section \ref{Sec-Extensions}.
%
%
\section{Proof of Theorem \ref{Thm-Main}}
\label{Sec-Proof-Main}
The structure of the proof of Theorem \ref{Thm-Main} is exactly the same whether $A$ is a self-adjoint operator or $H$ is a finite dimensional space, and strongly follows the one presented in \cite{ZuaSwitch}.

Let $y_0 \in H$ and $T>0$ be fixed, and then introduce the functional $J$ defined in \eqref{Def-J-H} for $z_T \in H$ and $z$ solving \eqref{Adj-Eq}. 

Since $\inf \alpha = 1/2 > 0$ and $\sup \alpha = 3/2 < \infty$, it is clear that the observability property \eqref{Obs-T=0} implies that for all $T>0$, there exists a constant $C_T$ such that for all $z_T \in H$, 
\begin{equation}
	\label{Obs-T=0-B12-alpha}
	\| z(0)\|_H^2 \leq C_T^2 \int_0^T \max\{\| B_1^* z(t) \|_{U_1}^2, \alpha(t) \| B_2^* z(t) \|_{U_2}^2 \} \, dt.
\end{equation}
Although the functional $J$ in \eqref{Def-J-H} is convex, the functional $J$ is in general not coercive with respect to the norm of $H$ (this is for instance the case when considering the heat equation). We thus introduce the space 
\begin{equation}
	\label{Def-X-Space}
	X = \overline{ H }^{\| \cdot \|_{obs}},
\end{equation}
\emph{i.e.} the completion of the space $H$ with respect to the norm $\| \cdot \|_{obs}$ given by 
\begin{equation}
	\label{Norm-B12-alpha}
	\| z_T \|_{obs}^2 =  \int_0^T \max\{\| B_1^* z(t) \|_{U_1}^2, \alpha(t) \| B_2^* z(t) \|_{U_2}^2 \} \, dt.
\end{equation}
One then easily checks that, since this norm is equivalent to 
$$
	\int_0^T \| B^* z(t)\|_U^2 \, dt, 
$$
for $\alpha$ of the form \eqref{Example-Alpha}, the space $X$ does not depend on the choice of the parameter $\omega$ in \eqref{Example-Alpha}.

Using \eqref{Obs-T=0-B12-alpha}, it is clear that the functional $J$ in \eqref{Def-J-H} admits a unique extension (still denoted the same)  as a continuous functional in $X$, and that it is coercive in $X$, and stays convex. 

The functional $J$ has therefore a minimizer $Z_T \in X$. To derive the Euler-Lagrange equation satisfied by $Z_T$, it is convenient to first analyze when the set
\begin{equation}
	\label{Def-I-Switching-Times}
	I = \{ t \in (0,T), \, \| B_1^* Z(t) \|_{U_1}^2 = \alpha(t) \| B_2^* Z(t) \|_{U_2}^2 \} 
\end{equation}
is of non-zero measure. We claim that this can happen only in the straightforward case $Z_T = 0$ in the two cases we are interested in:
\begin{lemma}
	\label{Lemma-A-SelfAdj}
	When $A$ is a self adjoint positive definite operator with compact resolvent and $\alpha$ is as in \eqref{Example-Alpha} with $\omega \in \R \setminus \{0\}$, the set $I$ is necessarily of zero measure, except in the case $ \| B_1^* Z \|_{L^2(0,T;U_1)} = \| B_2^* Z \|_{L^2(0,T; U_2)}  =0$ where $I = (0,T)$.
\end{lemma}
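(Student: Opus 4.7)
My plan is to exploit the analyticity of the semigroup $e^{-tA^*}$, which follows from the self-adjointness and positivity of $A$, together with the oscillatory behaviour of $\alpha$ at $-\infty$. The argument proceeds by deriving a Dirichlet-type expansion of $\|B_i^* Z(t)\|_{U_i}^2$ in the basis of eigenfunctions of $A$, and then analysing its asymptotic behaviour term by term as $t \to -\infty$.

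First, let $(e_k)_{k \geq 1}$ be an orthonormal basis of $H$ consisting of eigenvectors of $A$, associated with eigenvalues $0 < \lambda_1 \leq \lambda_2 \leq \cdots$ tending to $+\infty$. For $Z_T \in X$, the adjoint state $Z(t) = e^{-(T-t)A^*} Z_T$ is well defined in $H$ for every $t < T$ thanks to the smoothing effect of the analytic semigroup, and admits the expansion $Z(t) = \sum_k c_k e^{-\lambda_k(T-t)} e_k$ for suitable scalars $c_k$. Since $B_i^* \in \mathscr{L}(H, U_i)$, expanding the norm and grouping terms with the same value of $\lambda_k + \lambda_l$ produces
\[
\| B_i^* Z(t) \|_{U_i}^2 \;=\; \sum_{\mu \in \Lambda} A_{i,\mu} \, e^{-\mu(T-t)}, \qquad A_{i,\mu} \;=\; \sum_{\lambda_k + \lambda_l = \mu} c_k c_l \langle B_i^* e_k, B_i^* e_l \rangle_{U_i},
\]
where $\Lambda = \{\lambda_k + \lambda_l\}_{k,l \geq 1}$ intersects every bounded interval in finitely many points (since $\lambda_k \to +\infty$), so $\Lambda$ can be listed as $\mu_1 < \mu_2 < \cdots \to +\infty$. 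In particular, each map $t \mapsto \|B_i^* Z(t)\|_{U_i}^2$ is real-analytic on $(-\infty, T)$.

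Suppose now that $I$ has positive Lebesgue measure. Then the real-analytic function $f(t) = \|B_1^* Z(t)\|_{U_1}^2 - \alpha(t) \|B_2^* Z(t)\|_{U_2}^2$ vanishes on a set with accumulation points, hence, by the identity theorem, $f \equiv 0$ on $(-\infty, T)$. Assume for contradiction that some $A_{i,\mu}$ is nonzero, and let $\mu_* = \min \{\mu \in \Lambda : (A_{1,\mu}, A_{2,\mu}) \neq (0,0)\}$. Multiplying the identity $f(t) = 0$ by $e^{\mu_*(T-t)}$ and letting $t \to -\infty$, every term $e^{-(\mu-\mu_*)(T-t)}$ with $\mu > \mu_*$ decays to zero, whereas $\alpha(t)$ remains bounded, so
\[
A_{1,\mu_*} - \alpha(t) A_{2,\mu_*} \longrightarrow 0 \quad \text{as } t \to -\infty.
\]
Because $\omega \neq 0$, the set of accumulation values of $\alpha$ at $-\infty$ is the whole interval $[1/2, 3/2]$, so the above identity forces $A_{1,\mu_*} = a\, A_{2,\mu_*}$ for every $a \in [1/2, 3/2]$. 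This yields $A_{1,\mu_*} = A_{2,\mu_*} = 0$, contradicting the choice of $\mu_*$. Consequently all coefficients $A_{i,\mu}$ vanish, which gives $\|B_1^* Z\|_{L^2(0,T;U_1)} = \|B_2^* Z\|_{L^2(0,T;U_2)} = 0$ and $I = (0,T)$, the trivial case.

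The main technical point in carrying this out rigorously is the term-by-term passage to the limit as $t \to -\infty$, which requires that the tail $\sum_{\mu > \mu_*} A_{i,\mu} e^{-\mu(T-t)}$ be uniformly dominated by a constant times $e^{-\mu_{*+}(T-t)}$ for some $\mu_{*+} > \mu_*$. This should follow from the absolute convergence of the spectral expansion of $Z(t)$ for any $t < T$ combined with the boundedness of $B_i^*$ between $H$ and $U_i$.
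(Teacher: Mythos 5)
Your overall strategy is the same as the paper's: extend $Z$ analytically to the left, isolate the slowest--decaying spectral contribution, send $t\to-\infty$, and use that the set of accumulation values of $\alpha$ at $-\infty$ is the whole interval $[1/2,3/2]$ to force the leading coefficients to vanish. However, the step you defer to your last paragraph is precisely the heart of the proof, and the justification you sketch does not hold in general. The absolute convergence of the grouped Dirichlet series $\sum_{\mu}|A_{i,\mu}|e^{-\mu(T-t)}$ would essentially require $\sum_k |c_k|\,e^{-\lambda_k s}<\infty$ for $s>0$, which does \emph{not} follow from $Z(t)\in H$ (i.e.\ from square--summability of the $c_k e^{-\lambda_k(T-t)}$) without additional information such as $e^{-sA}$ being Hilbert--Schmidt; a positive self-adjoint operator with compact resolvent gives no control on the eigenvalue counting function, so this is a genuine gap. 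The paper never expands the squared norms as a Dirichlet series: it writes $Z=w_{k_0}e^{\lambda_{k_0}(t-T_n)}+Z_r$, where $w_{k_0}$ is the component in the first eigenspace not annihilated by both $B_1^*$ and $B_2^*$, and bounds the remainder by $\norm{B_i^* Z_r(t)}_{U_i}\leq \norm{B_i}\,\bigl\Vert\sum_{k>k_0}w_k e^{\lambda_k(t-T_n)}\bigr\Vert_H\leq \norm{B_i}\,e^{\lambda_{k_0+1}(t-T_n)}\norm{Z_n}_H$, using only the orthogonality of the eigenspaces and the boundedness of $B_i^*$. Since your argument only ever uses the single leading coefficient $A_{i,2\lambda_{k_0}}=\norm{B_i^* w_{k_0}}_{U_i}^2$, you should discard the full expansion and substitute this remainder estimate.

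A second, more minor but still real issue: $Z(t)$ being well defined in $H$ for $t<T$ is not a consequence of ``the smoothing effect of the analytic semigroup.'' The minimizer $Z_T$ lives in the completion $X\supset H$ for the observation norm, and the semigroup has no reason to act on $X$; what puts $Z(t)$ in $H$ is the observability inequality applied on a subinterval, which shows that $z_T\mapsto z(t)$ extends continuously from $(H,\norm{\cdot}_{obs})$ to $X$ with values in $H$. This is why the paper works on $(0,T_n)$ with $T_n<T$, anchors the spectral expansion at $Z(T_n)=Z_n\in H$ (the coefficients $c_k$ of $Z_T$ itself are meaningless when $Z_T\in X\setminus H$), and only at the very end lets $T_n\to T$. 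Both gaps are repairable, and once repaired your argument coincides with the paper's.
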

\begin{lemma}
	\label{Lemma-H-Finite-Dim}
	Let $H$ be a finite-dimensional space. Let $(\lambda_k)_{k \in \{1, \cdots, K\}}$ be the eigenvalues of the matrix $A^*$ ordered so that $\Re(\lambda_k) \leq \Re(\lambda_{k+1})$ for all $k$ and define the set $W$ as follows: 
	\begin{equation}
		\label{Def-W}
		W = \{0\} 
		\cup \left\{ \Im (\lambda_{k}) - \Im(\lambda_{k_1}), \frac{1}{2}(\Im (\lambda_{k}) - \Im(\lambda_{k_1})) , \, \hbox{ for all } (k,k_1) \hbox{ such that }   \Re (\lambda_{k})=  \Re(\lambda_{k_1}) \right\}.
	\end{equation}
	Then, for $\alpha$ as in \eqref{Example-Alpha} with $\omega \in \R \setminus W$, the set $I$ is necessarily of zero measure, except in the trivial case $ \| B_1^* Z \|_{L^2(0,T;U_1)} = \| B_2^* Z \|_{L^2(0,T; U_2)}  =0$ where $I = (0,T)$.
\end{lemma}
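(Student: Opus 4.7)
Since $H$ is finite-dimensional, $Z(t) = e^{(T-t) A^*} Z_T$ is entire in $t$ and $\alpha$ in \eqref{Example-Alpha} is real-analytic, so
\[ f(t) := \|B_1^* Z(t)\|_{U_1}^2 - \alpha(t)\|B_2^* Z(t)\|_{U_2}^2 \]
is real-analytic on $\R$. If $I$ has positive Lebesgue measure, $f$ vanishes on a set with an accumulation point, whence $f \equiv 0$ on $\R$; the plan is then to show that this identity forces $B_1^* Z \equiv B_2^* Z \equiv 0$, which by the (trivial) finite-dimensional observability yields $Z_T = 0$. Switching to $s = T - t$ and setting $g_i(s) := \|B_i^* Z(T-s)\|_{U_i}^2$, $\tilde\alpha(s) := \alpha(T-s)$, the identity reads $g_1 = \tilde\alpha g_2$ on $\R$.

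\textbf{Step 2 (reduction to the leading stratum).} Using the Jordan decomposition of $A^*$, write $B_i^* e^{s A^*} Z_T = \sum_k e^{s\lambda_k} \phi_{i,k}(s)$ with $\phi_{i,k}(s) \in U_i$ polynomial in $s$, so that
\[ g_i(s) = \sum_{j,k} e^{s(\lambda_j + \overline{\lambda_k})} \langle \phi_{i,j}(s), \phi_{i,k}(s)\rangle_{U_i}. \]
Assuming $B_2^* Z \not\equiv 0$, set $\sigma := \max\{\Re(\lambda_k) : \phi_{2,k} \not\equiv 0\}$. Matching the leading exponential rate as $s \to +\infty$ in $g_1 = \tilde\alpha g_2$ (using $1/2 \leq \tilde\alpha \leq 3/2$) forces $\sigma = \max\{\Re(\lambda_k) : \phi_{1,k} \not\equiv 0\}$; dividing by $e^{2\sigma s}$ and isolating the leading part yields, valid on $\R$,
\[ \|F_1(s)\|_{U_1}^2 = \tilde\alpha(s)\,\|F_2(s)\|_{U_2}^2, \quad F_i(s) := \sum_{k : \Re(\lambda_k) = \sigma} e^{i s \Im(\lambda_k)}\,\phi_{i,k}(s), \]
with $F_2 \not\equiv 0$.

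\textbf{Step 3 (Fourier analysis, role of $\omega \notin W$, and main obstacle).} Expand $\|F_i(s)\|_{U_i}^2 = \sum_{\gamma \in D_\sigma} c_{i,\gamma}(s)\,e^{i \gamma s}$, where $D_\sigma := \{\Im(\lambda_j) - \Im(\lambda_k) : \Re(\lambda_j) = \Re(\lambda_k) = \sigma\}$ is finite and $c_{i,\gamma}$ are polynomials in $s$ (set to $0$ outside $D_\sigma$). Writing $\tilde\alpha(s) = 1 + \tfrac{i}{4}\,e^{-i\omega T}\,e^{i\omega s} - \tfrac{i}{4}\,e^{i\omega T}\,e^{-i\omega s}$ and using linear independence of the $s^j e^{i\gamma s}$ with distinct $(j,\gamma)$, the identity $\|F_1\|^2 = \tilde\alpha\|F_2\|^2$ becomes the three-term recurrence
\[ c_{1,\gamma}(s) = c_{2,\gamma}(s) + \tfrac{i}{4}\,e^{-i\omega T}\,c_{2,\gamma-\omega}(s) - \tfrac{i}{4}\,e^{i\omega T}\,c_{2,\gamma+\omega}(s), \qquad \gamma \in \R. \]
The hypothesis $\omega \notin W$ is equivalent to $\omega, 2\omega \notin D_\sigma$. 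Taking $\gamma_*$ to be the maximum over the union of the (finite) supports of $c_{1,\cdot}$ and $c_{2,\cdot}$ (WLOG $\omega > 0$, the case $\omega < 0$ being symmetric), the recurrence at $\gamma_* + \omega$ has $c_{1,\gamma_*+\omega}$, $c_{2,\gamma_*+\omega}$, and $c_{2,\gamma_*+2\omega}$ all zero by maximality, forcing $c_{2,\gamma_*} \equiv 0$; the recurrence at $\gamma_*$ itself then yields $c_{2,\gamma_*-\omega} \not\equiv 0$. The delicate point---and the main obstacle---is to iterate this extremal argument, together with its symmetric counterpart at the minimum frequency $-\gamma_*$, back along the arithmetic progression $\{\gamma_* - n\omega\}_{n \geq 0}$: the condition $\omega \notin D_\sigma$ prevents one-step reabsorption of shifted frequencies into $D_\sigma$, while $2\omega \notin D_\sigma$ (corresponding precisely to the half-difference term $\tfrac{1}{2}(\Im(\lambda_k) - \Im(\lambda_{k_1}))$ in the definition of $W$) blocks the two-step loops that would otherwise sustain self-consistent nonzero chains; together they force vanishing to propagate across the finite frequency set, yielding $c_{2,\gamma} \equiv 0$ for all $\gamma$ and contradicting $F_2 \not\equiv 0$. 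Iterating on successively lower strata then gives $B_2^* Z \equiv 0$, whence $g_1 \equiv 0$ gives $B_1^* Z \equiv 0$, and we conclude $Z_T = 0$.
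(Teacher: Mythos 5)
Your Steps 1--2 and the derivation of the three-term recurrence
\[
c_{1,\gamma} = c_{2,\gamma} + \tfrac{i}{4}e^{-i\omega T} c_{2,\gamma-\omega} - \tfrac{i}{4}e^{i\omega T} c_{2,\gamma+\omega}
\]
are sound and run parallel to the paper's proof (which works asymptotically as $t\to-\infty$ and extracts Fourier coefficients via Lemma \ref{Lem-a-j=0}, rather than by exact coefficient matching; it also isolates the top polynomial degree $\ell_1$, which your exact-identity formulation lets you avoid). The problem is the decisive step. Your extremal-frequency induction is not carried out: you label it ``the main obstacle'' and then simply assert that $\omega,2\omega\notin D_\sigma$ ``force vanishing to propagate.'' As you have set it up, the first iteration does not even point in that direction: you conclude $c_{2,\gamma_*}\equiv 0$ but also $c_{2,\gamma_*-\omega}\not\equiv 0$, i.e.\ both $\gamma_*$ and $\gamma_*-\omega$ lie in $D_\sigma$, so that $\omega\in D_\sigma-D_\sigma$. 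That is \emph{not} excluded by $\omega\notin W$ (which only controls $D_\sigma$ itself and its halves, not differences of differences), so no contradiction arises at that stage and it is not clear the chain ever terminates. This is a genuine gap.

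The gap is easily closed inside your own framework, and the closing move is exactly what the paper does. Evaluate the recurrence at the single frequency $\gamma=\omega$: since $\omega\notin D_\sigma$ you have $c_{1,\omega}=c_{2,\omega}=0$, and since $2\omega\notin D_\sigma$ you have $c_{2,\omega+\omega}=0$; hence $c_{2,0}\equiv 0$. But $c_{2,0}(s)=\sum_{k:\Re(\lambda_k)=\sigma}\|\phi_{2,k}(s)\|_{U_2}^2$ is a sum of non-negative terms (distinct eigenvalues in the stratum have distinct imaginary parts, so only the diagonal contributes to the zero frequency), whence every $\phi_{2,k}\equiv 0$ and $F_2\equiv 0$, contradicting the choice of $\sigma$; then $\|F_1\|^2=\tilde\alpha\|F_2\|^2$ kills $F_1$ as well. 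The essential ingredient you are missing is the \emph{positivity} of the zero-frequency coefficient: once $c_{2,0}$ is killed, everything in the stratum dies at once, and no propagation along $\{\gamma_*-n\omega\}$ is needed. This is precisely the paper's argument, phrased there as: the coefficients of the constant term and of $e^{i\omega t}$ in \eqref{Expansion-B-1-B-2} appear only once because $\omega\notin W$, Lemma \ref{Lem-a-j=0} forces them to vanish, and the two resulting identities $\sum_{k\in D}\|B_2^*w_{k,\ell_1}\|^2=0$ and $\sum_{k\in D}\|B_1^*w_{k,\ell_1}\|^2=\sum_{k\in D}\|B_2^*w_{k,\ell_1}\|^2$ give the contradiction with $D\neq\emptyset$.
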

The proofs of Lemma \ref{Lemma-A-SelfAdj}, respectively Lemma \ref{Lemma-H-Finite-Dim}, are postponed to Section \ref{Subsec-Proof-Lem-SelfAdj}, respectively Section \ref{Subsec-Proof-Lem-H-Finite-Dim}. 

\begin{remark}\label{Rem-Lemma-and-UC}
	We point out that Lemma \ref{Lemma-A-SelfAdj} and Lemma \ref{Lemma-H-Finite-Dim} do not use the unique continuation property $ \| B_1^* Z \|_{L^2(0,T;U_1)} = \| B_2^* Z \|_{L^2(0,T; U_2)}  =0$ implies that $Z = 0$ in $(0,T)$, but only the analyticity of the semigroup and the clear structure of the spectrum of the operator $A$ when $A$ is a matrix or a self-adjoint operator.  	This will be of interest in extending Theorem \ref{Thm-Main} to $n$ operators, see Section \ref{Sec-Thm-Gal}.
\end{remark}

Based on the above results, using the observability property \eqref{Obs-T=0}, we deduce that the set $I$ is of zero measure except in the trivial case $Z_T = 0$. Therefore, when $Z_T \neq 0$ setting 
\begin{align*}
	I_1 &= \{ t \in (0,T), \, \| B_1^* Z(t) \|_{U_1}^2 > \alpha(t) \| B_2^* Z(t) \|_{U_2}^2 \}, 
	\\
	\hbox{ and } 
	\\
	\quad
	I_2 &= \{ t \in (0,T), \, \| B_1^* Z(t) \|_{U_1}^2 < \alpha(t) \| B_2^* Z(t) \|_{U_2}^2 \}, 
\end{align*}
the Euler-Lagrange equation satisfied by $Z$ easily yields that for all $z_T \in H$, 
\begin{equation}
	\label{Euler-Lag-Eq}
	0 
	= 
	\int_{I_1} \langle B_1^* Z(t),  B_1^* z(t) \rangle_{U_1} \, dt 
	+ 
	\int_{I_2} \alpha(t) \langle B_2^* Z(t),  B_2^* z(t) \rangle_{U_2} \, dt 
	+ 
	\langle y_0, z(0) \rangle_H, 
\end{equation} 
see \cite[p.91--93]{ZuaSwitch} for the careful justification of this identity.

It is then easy to check that, setting 
\begin{equation}
	\label{Def-Controls}
	u_1(t) 
	= 
		\left\{ 
			\begin{array}{ll}
				 B_1^* Z(t) \, &\hbox{ for }t \in I_1, 
				 \\
				 0\, &\hbox{ for }t \in I_2, 
			\end{array}
		\right.
	\quad
	\hbox{ and }
	\quad
	u_2(t) 
	= 
		\left\{ 
			\begin{array}{ll}
				 0  \, &\hbox{ for }t \in I_1, 
				 \\
				 \alpha(t) B_2^* Z(t)\, &\hbox{ for }t \in I_2, 
			\end{array}
		\right.
\end{equation}
the corresponding solution $y$ of \eqref{Main-Eq-u-1-2} satisfies \eqref{Null-Cont-Req}, while $u_1$ and $u_2$ satisfy the switching condition \eqref{Def-Switching-Controls}.

On the other hand, it is easy to check that, if $Z_T = 0$, then $y_0 = 0$ and the controls $u_1 = 0$ and $u_2 = 0$ are also suitable to control the trajectory \eqref{Main-Eq-u-1-2} to zero at time $T$ (\emph{i.e.} \eqref{Null-Cont-Req}), and obviously satisfy the switching condition \eqref{Def-Switching-Controls}.

It therefore remains to show Lemma \ref{Lemma-A-SelfAdj} and Lemma \ref{Lemma-H-Finite-Dim}, whose proofs are given in the next sections.

\subsection{Proof of Lemma \ref{Lemma-A-SelfAdj}: The case of a self-adjoint positive definite operator $A$ with compact resolvent}
\label{Subsec-Proof-Lem-SelfAdj}
In order to prove that the set $I$ is of zero measure except when $ \| B_1^* Z \|_{L^2(0,T;U_1)} = \| B_2^* Z \|_{L^2(0,T; U_2)}  =0$, we will consider a strictly positive and strictly increasing sequence $T_n$ going to $T$ as $n \to \infty$ and we will show that for all $n \in \N$, the set 
\begin{equation}
	\label{Def-I-n}
	I_n = I \cap (0, T_n)
\end{equation}
is of zero measure except in the trivial case where $B_1^* Z$ and $B_2^* Z$ vanish identically on $(0,T_n)$. This will entail that $I$ is of zero measure as well except in the trivial case where $B_1^* Z$ and $B_2^* Z$ vanish identically.

Let then $n \in \N$ corresponding to $T_n$. From \eqref{Obs-T=0} applied between $T_n$ and $T$, there exists $C_n$ such that for all $z_T \in H$, the solution $z$ of \eqref{Adj-Eq} satisfies
$$
	\sup_{t \in (0,T_n)} \| z(t)\|_{H} \leq C_n \| B^* z\|_{L^2(0, T; U)} \leq {\sqrt{2}}C_n \| z_T\|_{obs}. 
$$
Therefore, the map $z_T \in H \mapsto z(t) \in L^\infty(0,T_n;H)$ extends by continuity to $X$, and in particular, for $Z_T \in X$, 
\begin{align}
	& Z \hbox{ is well-defined on }\, (0,T_n)\, \hbox{ with values in } H, 
	\hbox{ and } 
	\notag
	\\
	& - Z' + A^* Z = 0, \quad t \in (0,T_n), \qquad Z(T_n) = Z_n \in H, 
	\label{Eq-Z-n}
\end{align}
and the set $I_n$ can be equivalently defined as
$$
	I_n = \{ t \in (0,T_n), \, \| B_1^* Z(t) \|_{U_1}^2  = \alpha(t) \| B_2^* Z(t) \|_{U_2}^2 \}. 
$$
Now, since $Z$ satisfies \eqref{Eq-Z-n}, $Z$ is an analytic function on $(0,T_n)$ because $- A^* = -A$ is the generator of an analytic semigroup, and it can thus be extended uniquely as an analytic function on $(- \infty, T_n)$ as the solution of 
\begin{equation}
	\label{Eq-Z-n-infinity}
	- Z' + A^* Z = 0, \quad t \in (-\infty,T_n), \qquad Z(T_n) = Z_n \in H.
\end{equation}
Therefore, since $\alpha$ also is an analytic function, if $I_n$ is of positive measure, then 
\begin{equation}
	\label{Infinite-Time-Identity}
	\forall t \in (-\infty, T_n),\quad \| B_1^* Z(t) \|_{U_1}^2  = \alpha(t) \| B_2^* Z(t) \|_{U_2}^2
\end{equation}
Our next goal is to prove that \eqref{Infinite-Time-Identity} cannot be satisfied except in the trivial case $ \| B_1^* Z \|_{L^2(0,T_n;U_1)} = \| B_2^* Z \|_{L^2(0,T_n; U_2)}  =0$. We thus assume \eqref{Infinite-Time-Identity}. 

Now, since $A$ is a positive definite self-adjoint operator with compact resolvent, its spectrum is given by a positive strictly increasing sequence of eigenvalues $0 < \lambda_1 < \lambda_2 < \cdots < \lambda_k < \lambda_{k+1} \to \infty$ and of corresponding eigenspace $H_k = \hbox{Kernel} ( A - \lambda_k I)$, which are two by two orthogonal.

We expand $Z_n \in H$ using this basis, 
\begin{equation}
	Z_n = \sum_{k \in \N} w_k, \quad \hbox{ with } w_k \in H_k, 
	\quad \hbox{ and } \quad \| Z_n \|_H^2 = \sum_{k} \|w_k\|_H^2,
\end{equation}
so that
\begin{equation}
	\label{Exp-Z(t)}
	\forall t < T_n, \quad Z(t) = \sum_{k \in \N} w_k e^{ \lambda_k (t - T_n)} .
\end{equation} 
Now, let 
\begin{equation}
	\label{Def-k-0}
	k_0 = \inf \{ k \in \N, \, \|B_1^* w_k\|_{U_1} + \| B_2^* w_k \|_{U_2} \neq 0 \}.
\end{equation}
Our goal is thus to check that $k_0$ cannot be finite. If $k_0$ is finite, then we should have
\begin{equation}
	\label{Cond-k-0}
	\|B_1^* w_{k_0}\|_{U_1} + \| B_2^* w_{k_0} \|_{U_2} \neq 0.
\end{equation}
Therefore, setting
$$
	Z_r(t) = \sum_{ k \neq k_0} w_k e^{ \lambda_k (t - T_n)}, \qquad (t < T_n), 
$$
the identity \eqref{Infinite-Time-Identity} implies that for all $t < T_n$, 
\begin{align*}
	\| B_1^* w_{k_0} \|_{U_1} ^2 - \alpha(t) \|B_2^* w_{k_0}\|_{U_2}^2 
	= & 
	- 2 e^{- \lambda_{k_0} (t-T_n)} \Re\left( \langle B_1^* w_{k_0}, B_1^* Z_r(t) \rangle_{U_1} - \alpha(t) \langle B_2^* w_{k_0}, B_2^* Z_r(t) \rangle_{U_2}\right)
	\\
	&
	- e^{-2 \lambda_{k_0} (t-T_n) } \left( \| B_1^* Z_r(t) \|_{U_1} ^2 - \alpha(t) \|B_2^* Z(t)\|_{U_2}^2  \right).
\end{align*}
Since
$$
	\exists C>0, \, \forall t < T_n, \quad \| B_1^* Z_r(t) \|_{U_1} + \| B_2^* Z_r(t)\|_{U_2} \leq C e^{ \lambda_{k_0+1} (t- T_n)}, 
$$
the last identity yields, for all $t < T_n$, 
$$
	\left| \| B_1^* w_{k_0} \|_{U_1} ^2 - \alpha(t) \|B_2^* w_{k_0}\|_{U_2}^2 \right|
	\leq C e^{(\lambda_{k_0+1} - \lambda_{k_0})(t- T_n)} + C e^{2 (\lambda_{k_0+1} - \lambda_{k_0})(t- T_n)}.
$$
Since $\lambda_{k_0+1} > \lambda_{k_0}$, making $t \to -\infty$, we obtain that, 
\begin{equation}
	\forall \alpha_\infty \in [\liminf_{t \to -\infty} \alpha, \limsup_{t \to -\infty} \alpha], 
	\quad
	 \| B_1^* w_{k_0} \|_{U_1} ^2 - \alpha_\infty \|B_2^* w_{k_0}\|_{U_2}^2 = 0. 
\end{equation}
Since $\liminf_{t \to -\infty} \alpha < \limsup_{t \to \infty} \alpha$, we easily get that this implies
$$
	B_1^* w_{k_0} = 0, \quad \hbox{ and } \quad B_2^* w_{k_0} = 0. 
$$
This contradicts \eqref{Cond-k-0}, so that $k_0$ is infinite and thus, $B_1^* Z = 0$ and $B_2^* Z = 0$ on $(-\infty, T_n)$. This shows that, except when $B_1^* Z$ and $B_2^* Z$ vanish identically on $(0,T_n)$, $I_n$ is of zero measure. In particular, passing to the limit $n \to \infty$, we easily get that $I$ is of zero measure except if $B^*_1 Z$ and $B_2^* Z$ vanish identically on $(0,T)$.
\begin{remark}
	In the above proof, we did not really use the specific form of $\alpha$. In fact, as one can check, the proof of Lemma \ref{Lemma-A-SelfAdj} works for any function $\alpha$ satisfying 
	\begin{equation}
		\label{Conditions-on-alpha}
		\left\{
		\begin{array}{l}
			\ds \alpha \hbox{ is an analytic function on } \R, 
			\smallskip\\
			\ds 0 < \inf_\R \alpha  < \sup_\R \alpha < \infty, 
			\\
			\ds \liminf_{t \to - \infty} \alpha < \limsup_{t \to -\infty} \alpha. 
		\end{array}
		\right.
	\end{equation}
\end{remark}
\subsection{Proof of Lemma \ref{Lemma-H-Finite-Dim}: The case of a finite-dimensional space $H$}
\label{Subsec-Proof-Lem-H-Finite-Dim}

In order to prove Lemma \ref{Lemma-H-Finite-Dim}, we will use the following result: 
\begin{lemma}
	\label{Lem-a-j=0}
	Let $J$ be a finite set and $(\mu_j)_{j \in J}$ be a finite sequence of two by two distinct real numbers.
	\\	
	Then, for any finite sequence $(a_j)_{j \in J}$ of elements of $\C$ such that 
	\begin{equation}
		\label{Condition-a-j}
		 \lim_{t \to - \infty} \left( \sum_{j \in J} a_j e^{ i \mu_j t} \right) = 0, 
	\end{equation}
	we have 
	\begin{equation}
		\forall j \in J, \quad a_j = 0.
	\end{equation}
\end{lemma}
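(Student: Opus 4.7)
The plan is to extract each coefficient $a_k$ by an averaging argument on long time intervals deep in the negative real axis, exploiting the fact that the exponentials $e^{i(\mu_j - \mu_k) t}$ with $\mu_j \neq \mu_k$ have vanishing Cesàro mean while $e^{i(\mu_k - \mu_k) t} = 1$ has mean $1$. The hypothesis $f(t) := \sum_{j\in J} a_j e^{i\mu_j t} \to 0$ as $t \to -\infty$ will be used to show that this isolated mean must be arbitrarily small, hence zero.

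More precisely, I would fix $k \in J$ and $\varepsilon > 0$, and pick $M>0$ such that $|f(t)| \le \varepsilon$ for all $t \le -M$. Then I fix any $\tau \le -M$ and consider, for $T>0$, the quantity
\begin{equation*}
	\frac{1}{T} \int_{\tau - T}^{\tau} f(t)\, e^{-i \mu_k t}\, dt
	= a_k + \sum_{\substack{j \in J \\ j \neq k}} a_j \cdot \frac{1}{T} \int_{\tau - T}^{\tau} e^{i(\mu_j - \mu_k) t}\, dt.
\end{equation*}
Because the $\mu_j$ are pairwise distinct, for each $j \neq k$ an explicit computation gives
\begin{equation*}
	\left| \frac{1}{T} \int_{\tau - T}^{\tau} e^{i(\mu_j - \mu_k) t}\, dt \right|
	\le \frac{2}{T\, |\mu_j - \mu_k|},
\end{equation*}
so, since $J$ is finite, the sum on the right of the previous identity tends to $0$ as $T \to \infty$ (with $\tau$ kept fixed). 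On the other hand, since $t \le \tau \le -M$ throughout the interval of integration, the integrand on the left satisfies $|f(t) e^{-i\mu_k t}| \le \varepsilon$, and therefore the left-hand side is bounded by $\varepsilon$ uniformly in $T$. Passing to the limit $T \to \infty$ yields $|a_k| \le \varepsilon$, and since $\varepsilon > 0$ was arbitrary, $a_k = 0$. As $k \in J$ was arbitrary, this concludes the proof.

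I do not anticipate any real obstacle here; the only subtle point is the order of the quantifiers in the limit, namely first fixing $\tau$ far enough into $-\infty$ (so the uniform $\varepsilon$-bound on $f$ applies on the whole interval $[\tau - T,\tau]$ no matter how large $T$ is) and only then sending $T \to \infty$. Inverting this order would spoil the averaging argument, since the Cesàro means of the oscillatory exponentials only become small as $T$ grows.
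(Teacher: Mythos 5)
Your proof is correct. The averaging identity, the $O(1/T)$ bound on the off-diagonal Ces\`aro means coming from $\mu_j \neq \mu_k$, the uniform bound $\varepsilon$ on the left-hand side (valid because the whole interval $[\tau-T,\tau]$ lies in $(-\infty,-M]$), and the order of limits you insist on at the end are all sound. The route is genuinely different from the paper's: the paper observes that $b \mapsto \bigl( \int_0^1 | \sum_j b_j e^{i\mu_j t}|^2 \, dt \bigr)^{1/2}$ is a norm on the finite-dimensional space of coefficient vectors, hence equivalent to $\bigl(\sum_j |b_j|^2\bigr)^{1/2}$, and then bounds $\sum_j |a_j|^2$ by $C$ times the $L^2$ mass of $f$ on a unit-length window $[-T,-T+1]$ translated to $-\infty$, which tends to $0$. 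That argument recovers all coefficients at once but hides the constant in a non-explicit norm equivalence; yours extracts one coefficient at a time by exploiting the vanishing long-time mean of $e^{i(\mu_j-\mu_k)t}$ over windows of growing length, and is fully explicit and quantitative (the error is controlled by $2/(T\,\min_{j\neq k}|\mu_j-\mu_k|)$ per term). Both are elementary and equally adequate for the use made of the lemma in the paper.
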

\begin{proof}
	To prove Lemma \ref{Lem-a-j=0}, we use that  since there is a finite number of $\mu_j$, 
	$$
		\int_0^1 \left| \sum_{j \in J} b_j e^{i \mu_j t} \right|^2 \, dt 
	$$
	is a norm on $\{ b = (b_j)_{j \in J}, \, b_j \in \C\}$, and is thus equivalent to the quantity 
	$$
		\sum_{j \in J} |b_j|^2. 
	$$
	Now, for $(a_j)_{j \in J}$ as in \eqref{Condition-a-j}, we have, for any $T \in \R$, 
	$$
		\sum_{j \in J} |a_j|^2
		= 
		\sum_{j \in J} | a_j e^{- i \mu_j T} | ^2
		\leq 
		C  \int_0^1 \left| \sum_{j \in J} a_j e^{ i \mu_j (t- T)} \right|^2 \, dt 
		\leq 
		 C\int_{-T}^{-T+1} \left| \sum_{j \in J} a_j e^{ i \mu_j t } \right|^2 \, dt. 
	$$
	Thus, choosing $T$ going to {$+\infty$}, the assumption \eqref{Condition-a-j} and the above estimates give Lemma \ref{Lem-a-j=0}.
\end{proof}
Let us now come back to the proof of Lemma \ref{Lemma-H-Finite-Dim}. To start with, we put the matrix $A^*$ into its Jordan form, and call $(\lambda_k)_{k \in \{1, \cdots, K\}}$ its eigenvalues ordered so that $\Re(\lambda_k) \leq \Re(\lambda_{k+1})$ for all $k$, and we call $H_k$ the corresponding generalized eigenspaces. 

%
We then prove that when $\omega \in \R \setminus W$ (recall the definition of $W$ in \eqref{Def-W}), with the choice of $\alpha$ as in \eqref{Example-Alpha}, $I$ necessarily is of zero measure except in the trivial case $ \| B_1^* Z \|_{L^2(0,T;U_1)} = \| B_2^* Z \|_{L^2(0,T; U_2)}  =0$. 

We thus assume that $I$ is of non-zero measure and we let $Z$ be the solution of \eqref{Adj-Eq} with initial datum $Z_T \in X$. Here, since $H$ is finite dimensional, $X = H$ and $Z_T \in H$. Then the solution $Z$ of \eqref{Adj-Eq}  can be defined on $\R$, is an analytic function of time, and we write it under the form 
\begin{equation}
	\label{Expansion-Z-t}
	Z(t) = \sum_{ k} e^{\lambda_k (t - T)} \left( \sum_{\ell = 0}^{m_k} (T -t)^\ell w_{k,\ell} \right), \qquad (t \in \R),
\end{equation}
where $m_k$ is the size of the maximal Jordan block corresponding to $\lambda_k$ (or equivalently, its algebraic multiplicity), and each $w_{k, \ell}$ belongs to $H_k$. Besides, since we assume that $I$ is of non zero measure and since $Z$ in \eqref{Expansion-Z-t} is analytic with respect to time, we should have {$I = (0,T)$, and it follows that}
\begin{equation}
	\label{I=R-Identity}
	\forall t \in \R, \quad \| B_1^* Z(t) \|_{U_1}^2 = \alpha(t) \| B_2^* Z(t)\|_{U_2}^2.
\end{equation}

Now, let 
$$
	k_0 = \inf \left\{ k \in \{1, \cdots, K\}: \  \exists \ell \in \{0, \cdots, m_k\} \ {\text{such that}}  \   \| B_1^* w_{k, \ell} \|_{U_1}+ \| B_2^* w_{k, \ell} \|_{U_2} \neq 0 \right\}.
$$
If {$k_0 <\infty $}, we define $\ell_1$ by
$$
	\ell_1 = \sup \left\{\ell: \, {\exists} k \hbox{ with } \Re(\lambda_{k}) = \Re(\lambda_{k_0})  \hbox{ and }  \| B_1^* w_{k, {\ell}} \|_{U_1}+ \| B_2^* w_{k, {\ell}} \|_{U_2} \neq 0 \right\}, 
$$
and the set 
$$
	D = \left \{ k:  \ \Re(\lambda_{k}) = \Re(\lambda_{k_0}) \hbox{ and } \| B_1^* w_{k, \ell_1} \|_{U_1}+ \| B_2^* w_{k, \ell_1} \|_{U_2} \neq 0 \right \}, 
$$
which describes the indices giving the dominant terms in $\| B_1^* Z(t) \|_{U_1}^2 - \alpha(t) \| B_2^* Z(t)\|_{U_2}^2$ as $t \to -\infty$. Indeed, setting 
\begin{equation}
	\label{Def-Z-d-Z-r}
	Z_d(t) =  \sum_{k \in D} w_{k,\ell_1} e^{i \Im(\lambda_k) (t-T)}, 
	\quad
	\hbox{ and }
	\quad
	Z_r(t) = Z(t) - e^{{\Re (\lambda_{k_0})} (t-T)} (T-t)^{\ell_1} Z_d(t), 
\end{equation}
we have, for some $C$ independent of time
\begin{equation}
	\label{Estimate-B-Z-r-0}
	\forall t \in (- \infty, T), \quad  \| B_1^* Z_r(t) \|_{U_1}+ \| B_2^* Z_r(t) \|_{U_2} \leq 
	{ 
	\left\{
		\begin{array}{ll}
		C e^{\Re(\lambda_{k_0}) t}( 1+ (T - t)^{\ell_1 - 1}) & \text{ if }\ell_1 \geq 1, 
		\\
		C e^{(\Re(\lambda_{k_0+1})+ \Re(\lambda_{k_0})) t/2} & \text{ if } \ell_1 = 0, 
	\end{array}
	\right.
	}
	.  
\end{equation}
{and thus, possibly changing the constant,}
\begin{equation}
	\label{Estimate-B-Z-r}
	{ 
	\forall t \in (- \infty, T-1), \quad  \| B_1^* Z_r(t) \|_{U_1}+ \| B_2^* Z_r(t) \|_{U_2} \leq 
		C e^{\Re(\lambda_{k_0}) t} (T - t)^{\ell_1 - 1} 
	}
	.  
\end{equation}
Therefore, using \eqref{I=R-Identity}, we easily get that 
\begin{equation}
	\label{Est-Z-d}
	\forall t \in (-\infty, T-1), 
	\quad 
	\left| 
		 \| B_1^* Z_d(t) \|_{U_1}^2 - \alpha(t) \| B_2^* Z_d(t)\|_{U_2}^2
	\right| 
	\leq 
	{\frac{C}{T - t} }. 
\end{equation}
Now, we expand $ \| B_1^* Z_d(t) \|_{U_1}^2 - \alpha(t) \| B_2^* Z_d(t)\|_{U_2}^2$: 
\begin{align}
	\notag
	 \| B_1^* Z_d(t) \|_{U_1}^2
	 & - \alpha(t) \| B_2^* Z_d(t)\|_{U_2}^2
	  =
	 \sum_{k\in D}  \| B_1^* w_{k,\ell_1} \|_{U_1}^2 
	 - 
	\left(1 + \frac{ \sin(\omega t)}{2} \right) \sum_{k\in D} \| B_2^* w_{k,\ell_1}\|_{U_2}^2
	\\
	\label{Expansion-B-1-B-2} 
	& + 
	2  \sum_{k\in D} \sum_{k_1 \in D, \, k_1 > k} \Re\left( e^{  i( \Im(\lambda_k) - \Im(\lambda_{k_1})) t} \langle B_1^* w_{k,\ell_1}, B_1^* w_{k_1,\ell_1} \rangle_{U_1}\right)
	\\
	\notag
	&- 	
	2 \left(1 + \frac{ \sin(\omega t)}{2} \right)  \sum_{k \in D} \sum_{k_1 \in D, \, k_1 > k} \Re\left( e^{  i( \Im(\lambda_k) - \Im(\lambda_{{k_1}})) t} \langle B_2^* w_{k,\ell_1}, B_2^* w_{k_1,\ell_1} \rangle_{U_2}\right).
\end{align}
From this, we deduce that the function $ \| B_1^* Z_d(t) \|_{U_1}^2 - \alpha(t) \| B_2^* Z_d(t)\|_{U_2}^2$ is of the form $\sum_j a_j e^{ i \mu_j t}$, where 
$$
	\{ \mu_j \} = 
	\{
		0, 
		\,
		\pm \omega, 
		\, 
		 (\Im(\lambda_k) - \Im(\lambda_{k_1})),
		\, 
		\pm \omega + (\Im(\lambda_k) - \Im(\lambda_{k_1})) \hbox{ for } k, k_1 \in D
	  \}.
$$
This set is finite, but there might be some non-distinct values in the set given on the right hand-side. We shall thus rely on the choice $\omega \notin W$ (recall that $W$ is defined in \eqref{Def-W}), which guarantees that $0$ and $\omega$ appears only once in the above list. Therefore, using \eqref{Est-Z-d}, Lemma \ref{Lem-a-j=0} guarantees at least that the numbers in front of the constant term (corresponding to $\mu = 0$) and of $e^{ i \omega t}$ in \eqref{Expansion-B-1-B-2} vanish, \emph{i.e.}:
\begin{align*}
	0 & =   \sum_{k\in D}  \| B_1^* w_{k,\ell_1} \|_{U_1}^2 
	 - 
	 \sum_{k\in D} \| B_2^* w_{k,\ell_1}\|_{U_2}^2, 
	 \\
	 0 & =  \sum_{k\in D} \| B_2^* w_{k,\ell_1}\|_{U_2}^2. 
\end{align*}
Combining the two above identities, we easily deduce that 
$$
	\forall k \in D, \quad  \| B_1^* w_{k, \ell_1} \|_{U_1}+ \| B_2^* w_{k, \ell_1} \|_{U_2}= 0.
$$
In view of the definition of the set $D$, the set $D$ is necessarily empty. This contradicts the definition of $k_0$ and $\ell_1$. Hence, {$k_0 = \infty$} and $B_1^* Z(t) = 0$ and $B_2^*Z(t) = 0$ for all $t \in \R$.
%
%
\section{Proof of Theorem \ref{Thm-Main-Gal}}
\label{Sec-Thm-Gal}

Of course, the proof of Theorem \ref{Thm-Main-Gal} follows the one of Theorem \ref{Thm-Main}. We only point out the main differences that are needed in the proof of Theorem \ref{Thm-Main} to conclude Theorem \ref{Thm-Main-Gal}. 

To fix the ideas, we consider only the case $ n = 3$, as the case of $ n \geq 4$ control operators can be done exactly similarly to the price of adding some notations.

	Given $y_0 \in H$, we consider the functional 
	\begin{equation}
		\label{Def-J-H-3-Controls}
		J(z_T) = \frac{1}{2} \int_0^T \max \{ \alpha_1(t) \| B_1^* z(t)\|_{U_1}^2, \alpha_2(t) \| B_2^* z(t)\|_{U_2}^2, \alpha_3 (t) \| B_3^* z(t) \|_{U_3}^2 \} \, dt + \langle y_0, z(0) \rangle_H, 
	\end{equation}
	defined for $z_T \in \mathscr{D}(A^*)$, where $z$ is the solution of the adjoint problem \eqref{Adj-Eq}, and $\alpha_i = \alpha_i(t)$ is given by 
	\begin{equation}
		\label{Example-Alpha-3-controls}
		\alpha_i(t) = 1+ \frac{1}{2} \sin(\omega_i t), \quad t \in \R, \quad i \in \{1, 2, 3\}.
	\end{equation}
	where the frequencies $\omega_i $ are suitably chosen. 

	Similarly as in the proof of Theorem \ref{Thm-Main}, the functional $J$ can be extended by continuity on the space 
$$
		X = \overline{\mathscr{D}(A^*)}^{\|\cdot \|_{obs} },
$$
where the norm $\|\cdot \|_{obs}$ is the one defined by 
	$$
		\| z_T\|_{obs}^2 = \int_0^T \max \{ \alpha_1(t) \| B_1^* z(t)\|_{U_1}^2, \alpha_2(t) \| B_2^* z(t)\|_{U_2}^2, \alpha_3 (t) \| B_3^* z(t) \|_{U_3}^2 \} \, dt, 
	$$
and is coercive on that space $X$. Therefore, $J$ has a  minimizer $Z_T \in X$. Next, to properly derive the Euler-Lagrange equation satisfied by $Z_T$, we study the sets 
	\begin{equation}
		\forall (i,j) \in \{1, 2, 3\}^2 \hbox{ with } i < j, \quad
		I_{i,j} = \left\{ t \in (0,T), \, \alpha_i(t) \|  B_i^* Z(t) \|_{U_i}^2 = \alpha_j(t) \|  B_j^* Z(t) \|_{U_j}^2 \right\}.
	\end{equation}
	\medskip

	{\bf The case $A$ self-adjoint positive definite operator with compact resolvent.} When $A$ is a self-adjoint positive definite operator with compact resolvent, Lemma \ref{Lemma-A-SelfAdj} can be easily adapted to show the following result: 
	
	\begin{lemma}
	\label{Lemma-A-SelfAdj-Gal}
	When $A$ is a self adjoint positive definite operator with compact resolvent and $(\alpha_i)_{i \in \{1, 2, 3\} }$ are as in \eqref{Example-Alpha-3-controls} with $(\omega_1, \omega_2, \omega_3) \in \R_+^3$ two by two distincts, for all $i,j\in \{1, \cdots, 3\}$ with $i \neq j$, the set $I_{i,j}$ is necessarily of zero measure, except in the trivial case $ \| B_i^* Z \|_{L^2(0,T;U_i)} = \| B_j^* Z \|_{L^2(0,T; U_j)}  =0$.
	\end{lemma}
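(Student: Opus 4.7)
The plan is to adapt the argument of Lemma \ref{Lemma-A-SelfAdj} to each pair $(i,j) \in \{1,2,3\}^2$ with $i \neq j$ independently; the presence of a third control plays no role in the analysis of $I_{i,j}$. The only novelty is that the identity to be promoted by analyticity now involves two oscillating weights $\alpha_i$ and $\alpha_j$ at distinct frequencies rather than a single one, which forces a replacement of the ad hoc $\liminf/\limsup$ argument used in Section \ref{Subsec-Proof-Lem-SelfAdj} by an application of Lemma \ref{Lem-a-j=0}.

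For a fixed pair $(i,j)$, I would first introduce an increasing sequence $T_n \nearrow T$ and use the observability inequality \eqref{Obs-T=0} between $T_n$ and $T$ exactly as in Section \ref{Subsec-Proof-Lem-SelfAdj} to extend $Z$ analytically to $(-\infty, T_n)$ as the solution of \eqref{Eq-Z-n-infinity}. If $I_{i,j} \cap (0, T_n)$ were of positive measure, the analyticity of $Z$ together with that of $\alpha_i$ and $\alpha_j$ would yield
\[
	\alpha_i(t) \| B_i^* Z(t) \|_{U_i}^2 = \alpha_j(t) \| B_j^* Z(t) \|_{U_j}^2 \quad \text{for all } t \in (-\infty, T_n).
\]
Expanding $Z(t) = \sum_k w_k e^{\lambda_k(t-T_n)}$ in the eigenbasis of $A$, I would set
\[
	k_0 = \inf \{ k \in \N : \| B_i^* w_k \|_{U_i} + \| B_j^* w_k \|_{U_j} \neq 0 \}
\]
and argue by contradiction that $k_0$ is finite. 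Isolating the dominant term $e^{\lambda_{k_0}(t - T_n)}$ as in the proof of Lemma \ref{Lemma-A-SelfAdj} and using the exponential bound $\| B_i^* Z_r(t) \|_{U_i} + \| B_j^* Z_r(t) \|_{U_j} \leq C e^{\lambda_{k_0+1}(t-T_n)}$ on the remainder, I obtain
\[
	\alpha_i(t) \| B_i^* w_{k_0} \|_{U_i}^2 - \alpha_j(t) \| B_j^* w_{k_0} \|_{U_j}^2 = O\!\left( e^{(\lambda_{k_0+1}-\lambda_{k_0})(t-T_n)} \right),
\]
whose right-hand side tends to $0$ as $t \to -\infty$ since $\lambda_{k_0+1} > \lambda_{k_0}$.

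The heart of the argument is then to conclude from this decay that $\| B_i^* w_{k_0} \|_{U_i}$ and $\| B_j^* w_{k_0} \|_{U_j}$ both vanish. Substituting $\alpha_i(t) = 1 + \frac{1}{2}\sin(\omega_i t)$ and $\alpha_j(t) = 1 + \frac{1}{2}\sin(\omega_j t)$, the left-hand side is a finite exponential sum indexed by the five frequencies $\{ 0, \pm \omega_i, \pm \omega_j \}$; under the hypothesis that $\omega_1, \omega_2, \omega_3$ are strictly positive and pairwise distinct, these frequencies are two by two distinct. Lemma \ref{Lem-a-j=0} therefore forces every coefficient to vanish: the coefficients of $e^{\pm i \omega_i t}$ give $\| B_i^* w_{k_0} \|_{U_i} = 0$ and those of $e^{\pm i \omega_j t}$ give $\| B_j^* w_{k_0} \|_{U_j} = 0$, contradicting the definition of $k_0$. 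Hence $k_0 = \infty$, so that $B_i^* Z \equiv 0$ and $B_j^* Z \equiv 0$ on $(0, T_n)$, and letting $n \to \infty$ yields the same on $(0, T)$.

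The only genuinely new ingredient compared to Lemma \ref{Lemma-A-SelfAdj} is thus this application of Lemma \ref{Lem-a-j=0}, and the \emph{only} reason it goes through cleanly is the separation of the frequencies $\omega_i$; any resonance would introduce additional coefficients in the exponential sum that could compensate each other and spoil the argument. I do not anticipate any further obstacle, since the spectral structure of a self-adjoint positive definite operator with compact resolvent and the analyticity of its semigroup continue to provide everything needed to reproduce the steps of Section \ref{Subsec-Proof-Lem-SelfAdj} verbatim.
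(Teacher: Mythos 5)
Your proof is correct, and its skeleton (truncation to $(0,T_n)$, analytic extension to $(-\infty,T_n)$, spectral expansion, isolation of the dominant mode $k_0$) is exactly the adaptation of Lemma \ref{Lemma-A-SelfAdj} that the paper intends. Where you genuinely diverge is in the concluding step: the paper's (sketched) argument divides the identity by $\alpha_i(t)$ and exploits the fact that the ratio $\alpha_j/\alpha_i$ has a set of accumulation points at $-\infty$ containing a non-trivial interval, so that $\|B_i^* w_{k_0}\|_{U_i}^2 - \beta \|B_j^* w_{k_0}\|_{U_j}^2 = 0$ for a whole interval of values of $\beta$, forcing both norms to vanish. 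You instead keep the difference $\alpha_i(t)\|B_i^* w_{k_0}\|_{U_i}^2 - \alpha_j(t)\|B_j^* w_{k_0}\|_{U_j}^2$, view it as a finite exponential sum with frequencies $\{0,\pm\omega_i,\pm\omega_j\}$, and invoke Lemma \ref{Lem-a-j=0}. Both conclusions are valid; yours has the merit of unifying the self-adjoint case with the machinery of the finite-dimensional proof, while the paper's route is slightly more elementary and extends verbatim to any analytic weights whose ratio satisfies the analogue of the last condition in \eqref{Conditions-on-alpha}, not only sinusoids. Two small caveats on your version: if $\R_+$ is read as containing $0$ (and indeed at most one $\omega_i$ may vanish under the pairwise-distinctness hypothesis), the five frequencies collapse to three, but the conclusion still follows from the coefficient of $e^{i\omega_j t}$ together with the constant term, so nothing breaks; and since Theorem \ref{Thm-Main-Gal} allows $B \in \mathscr{L}(U,\mathscr{D}(A^*)')$, one should note that the parabolic smoothing places $Z(t)$ in $\mathscr{D}(A^*)$ for $t<T_n$, so that $B_i^* Z(t)$ is well defined and analytic — a point the paper also glosses over.
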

	Since the proof of Lemma \ref{Lemma-A-SelfAdj-Gal} is completely similar to the {proof of Lemma \ref{Lemma-A-SelfAdj}}, relying on the fact that $\alpha_i/\alpha_j$ admits a set of accumulation points at $-\infty$ which contains a non-trivial interval, we skip it and leave it to the reader.	
	\medskip
	
	{\bf The case $H$ of finite dimension.} When $H$ is a finite dimensional vector space, we choose the parameters $\omega_i$ successively, for instance we can take 
	\begin{equation}
		\label{Def-om-1-2}
		\omega_1 = 0, \quad \omega_2 \in \R \setminus W, 
	\end{equation}
	where $W$ is defined in \eqref{Def-W}, and 
	\begin{equation}
		\label{Def-om-3}
		\omega_3 \in \R \setminus W_3, 
	\end{equation}
	where $W_3$ is defined by
	\begin{equation}
		 \label{Def-W-3}
		W_3 = W \cup \{\pm \omega_2, \pm \omega_2 +  \Im(\lambda_k) - \Im(\lambda_{k_1} ) 
		, \hbox{ for all $(k,k_1)$ such that $\Re(\lambda_k) = \Re(\lambda_{k_1} )$} \}. 
	\end{equation}
	We then prove the following result:
	\begin{lemma}
		\label{Lemma-H-Finite-Dim-3}
	When $H$ is a finite-dimensional space, setting $\omega_1 = 0$, and choosing $\omega_2 \in \R \setminus W$ (defined in \eqref{Def-W}) and $\omega_3 \in \R \setminus W_3$ (defined in \eqref{Def-W-3}) and taking $\alpha_i$ as in \eqref{Example-Alpha} corresponding to $\omega_i$, for all $(i,j) \in \{1, 2, 3\}$ with $i < j$, the set $I_{i,j}$ is necessarily of zero measure, except in the trivial case $ \| B_i^* Z \|_{L^2(0,T;U_i)} = \| B_j^* Z \|_{L^2(0,T; U_j)}  =0$.
	\end{lemma}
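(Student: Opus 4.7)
The plan is to adapt the argument of Lemma \ref{Lemma-H-Finite-Dim} to each of the three pairs $(i,j) \in \{(1,2),(1,3),(2,3)\}$ with $i<j$, exploiting that the frequency $\omega_1 = 0$ makes $\alpha_1 \equiv 1$ so that the first two pairs reduce essentially to the two-control case, while $W_3$ is engineered precisely to handle the remaining pair $(2,3)$.

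For pair $(1,2)$ the identity defining $I_{1,2}$ reads $\|B_1^* Z(t)\|_{U_1}^2 = \alpha_2(t)\|B_2^* Z(t)\|_{U_2}^2$, which is exactly the equality treated in Lemma \ref{Lemma-H-Finite-Dim} (with $B_2$ replaced by $B_2$ and $\alpha$ replaced by $\alpha_2$). Since by assumption $\omega_2 \in \R\setminus W$, the proof there applies verbatim and forces $B_1^* Z \equiv 0 \equiv B_2^* Z$. The argument for pair $(1,3)$ is identical upon noting that $\omega_3 \in \R\setminus W_3 \subset \R\setminus W$, so the required non-resonance condition on $\omega_3$ is in place.

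The substantive new case is the pair $(2,3)$. Assuming $I_{2,3}$ has positive measure, analyticity of $Z$ and of $\alpha_2,\alpha_3$ upgrades the identity to
\begin{equation*}
\alpha_2(t)\|B_2^* Z(t)\|_{U_2}^2 = \alpha_3(t)\|B_3^* Z(t)\|_{U_3}^2 \qquad \forall t \in \R.
\end{equation*}
I then carry out the same dominant-term extraction as in Section \ref{Subsec-Proof-Lem-H-Finite-Dim}: expanding $Z$ in the Jordan basis via \eqref{Expansion-Z-t}, I define $k_0$ as the smallest index for which $B_2^* w_{k,\ell} + B_3^* w_{k,\ell} \neq 0$ for some $\ell$, $\ell_1$ as the largest such $\ell$ among all $k$ with $\Re(\lambda_k) = \Re(\lambda_{k_0})$, and $D$ as the corresponding index set. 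Splitting $Z = Z_d + Z_r$ as in \eqref{Def-Z-d-Z-r} with the analogous estimate \eqref{Estimate-B-Z-r} for the remainder, and dividing by the common dominant factor $e^{2\Re(\lambda_{k_0})(t-T)}(T-t)^{2\ell_1}$, the identity reduces, modulo a term of size $O(1/(T-t))$, to
\begin{equation*}
\alpha_2(t)\|B_2^* Z_d(t)\|_{U_2}^2 - \alpha_3(t)\|B_3^* Z_d(t)\|_{U_3}^2 = o(1) \quad \text{as } t \to -\infty.
\end{equation*}

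Expanding $\|B_j^* Z_d(t)\|_{U_j}^2$ and $\alpha_j(t) = 1 + \frac{1}{2}\sin(\omega_j t)$, the left-hand side takes the form $\sum_j a_j e^{i \mu_j t}$ with $\mu_j$ lying in the finite set
\begin{equation*}
\{0\} \cup \{\pm\omega_2, \pm\omega_3\} \cup \{\pm(\Im\lambda_k-\Im\lambda_{k'})\} \cup \{\pm\omega_p \pm(\Im\lambda_k - \Im\lambda_{k'})\}_{p\in\{2,3\},\, k,k'\in D}.
\end{equation*}
The choice $\omega_2 \in \R\setminus W$ and $\omega_3 \in \R\setminus W_3$ is precisely what guarantees that the values $0$, $\omega_2$ and $\omega_3$ each occur exactly once in this list (the exclusion $\omega_3 \notin \{\pm\omega_2, \pm\omega_2 + \Im\lambda_k - \Im\lambda_{k'}\}$ together with $\omega_2,\omega_3\notin W$ rules out every possible coincidence). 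Applying Lemma \ref{Lem-a-j=0} to extract the coefficients of $1$, $e^{i\omega_2 t}$ and $e^{i\omega_3 t}$ then yields
\begin{equation*}
\sum_{k\in D}\|B_2^* w_{k,\ell_1}\|_{U_2}^2 = \sum_{k\in D}\|B_3^* w_{k,\ell_1}\|_{U_3}^2, \qquad \sum_{k\in D}\|B_2^* w_{k,\ell_1}\|_{U_2}^2 = 0, \qquad \sum_{k\in D}\|B_3^* w_{k,\ell_1}\|_{U_3}^2 = 0,
\end{equation*}
which forces $B_2^* w_{k,\ell_1} = B_3^* w_{k,\ell_1} = 0$ for every $k\in D$, emptying $D$ and contradicting the definition of $k_0$ and $\ell_1$. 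The main obstacle, and indeed the motivation behind the definition of $W_3$, is to ensure that the three frequencies one wishes to extract remain isolated; once this book-keeping on $\omega_2,\omega_3$ is correct the rest is a routine transcription of the two-control proof.
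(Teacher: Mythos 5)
Your proposal is correct and follows essentially the same route as the paper: reduce the pairs $(1,2)$ and $(1,3)$ to Lemma \ref{Lemma-H-Finite-Dim} (using $\alpha_1\equiv 1$ and $W\subset W_3$), then for $(2,3)$ redo the dominant-term extraction and apply Lemma \ref{Lem-a-j=0} to the isolated frequencies. The only (harmless) difference is that you extract three coefficients ($1$, $e^{i\omega_2 t}$, $e^{i\omega_3 t}$) where the paper uses only the constant term and $e^{i\omega_3 t}$, which already suffice to force $\sum_{k\in D}\|B_2^*w_{k,\ell_1}\|_{U_2}^2=\sum_{k\in D}\|B_3^*w_{k,\ell_1}\|_{U_3}^2=0$.
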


	We briefly sketch the proof of Lemma \ref{Lemma-H-Finite-Dim-3} below. 
	
	\begin{proof}[Sketch of the proof of Lemma \ref{Lemma-H-Finite-Dim-3}]
		Clearly, when $i = 1$, the proof of Lemma \ref{Lemma-H-Finite-Dim-3} reduces to the proof of Lemma \ref{Lemma-H-Finite-Dim}.
		
		We thus focus on the case $i = 2$ and $j = 3$. Similarly as in the proof of Lemma \ref{Lemma-H-Finite-Dim}, we assume that $I_{2,3}$ is of positive measure. By analyticity, this implies that $I_{2,3} = (0,T)$, and by extending $Z$ on $\R$ by analyticity, that for all $t \in \R$, $\alpha_2(t) \| B_2^* Z(t) \|_{U_2}^2 = \alpha_3(t) \| B_3^* Z(t) \|_{U_2}^2$. We then expand $Z$ as in \eqref{Expansion-Z-t} and define, as in the proof of Lemma \ref{Lemma-H-Finite-Dim}, 
	\begin{align*}
			k_{0} & = \inf \left\{ k \in \{1, \cdots, K\}:  \, \exists \ell \in \{0, \cdots, m_k\} \  {\text{such that}} \  \| B_2^* w_{k, \ell} \|_{U_2}+ \| B_3^* w_{k, \ell} \|_{U_3} \neq 0 \right\}, 
	\end{align*}
	{and, if $k_0 < \infty$,}
	\begin{align*}
		\\
			\ell_{1} & = \sup \left\{\ell: \, {\exists}  k \hbox{ with } \Re(\lambda_{k}) = \Re(\lambda_{k_{0}})  \hbox{ and }  \| B_2^* w_{k, {\ell}} \|_{U_2}+ \| B_3^* w_{k, {\ell}} \|_{U_3} \neq 0 \right\}, 
		\\
			D & =  \left \{ k:   \Re(\lambda_{k}) = \Re(\lambda_{k_{0}}) \hbox{ and } \| B_2^* w_{k, \ell_1} \|_{U_2}+ \| {B_3^*} w_{k, \ell_1} \|_{U_3} \neq 0 \right \}, 
		\\
			Z_{d}(t) & =  \sum_{k \in D} w_{k,\ell_1} e^{i \Im(\lambda_k) (t-T)}, \qquad (t \in \R).
	\end{align*}
	With the above choices, similarly as in \eqref{Expansion-B-1-B-2}, we have the formula, for all $t \in \R$,  
	\begin{align}
			\notag
			\alpha_2(t) \| B_2^* Z_d(t) \|_{U_2}^2
			 & - \alpha_3(t) \| B_3^* Z_d(t)\|_{U_3}^2
			 \\
			\notag
			  = &
			\left(1 + \frac{ \sin(\omega_2 t)}{2} \right)  \sum_{k\in D}  \| B_2^* w_{k,\ell_1} \|_{U_2}^2 
			 - 
			\left(1 + \frac{ \sin(\omega_3 t)}{2} \right) \sum_{k\in D} \| B_3^* w_{k,\ell_1}\|_{U_3}^2
			\\
			\label{Expansion-B-i-B-j} 
			& + 
			2  \left(1 + \frac{ \sin(\omega_2 t)}{2} \right)  \sum_{k\in D} \sum_{k_1 \in D, \, k_1 > k} \Re\left( e^{  i( \Im(\lambda_k) - \Im(\lambda_{k_1})) t} \langle B_2^* w_{k,\ell_1}, B_2^* w_{k_1,\ell_1} \rangle_{U_2}\right)
			\\
			\notag
			&- 	
			2 \left(1 + \frac{ \sin(\omega_3 t)}{2} \right)  \sum_{k \in D} \sum_{k_1 \in D, \, k_1 > k} \Re\left( e^{  i( \Im(\lambda_k) - \Im(\lambda_{k+1})) t} \langle B_3^* w_{k,\ell_1}, B_3^* w_{k_1,\ell_1} \rangle_{U_3}\right). 
	\end{align}
	which stands instead of \eqref{Expansion-B-1-B-2}. Besides, since for all $t \in \R$, we have $\alpha_2(t) \| B_2^* Z(t) \|_{U_2}^2 - \alpha_3(t) \| B_3^* Z(t)\|_{U_3}^2 = 0$, we can also deduce, as in \eqref{Estimate-B-Z-r}, that 
	\begin{equation}
		\label{Est-Z-d-23}
		\forall t \in (-\infty, {T-1}), 
		\quad 
			\left| 
				\alpha_2(t) \| B_2^* Z_d(t) \|_{U_2}^2	- \alpha_3(t) \| B_3^* Z_d(t)\|_{U_3}^2
			\right| 
			\leq 
		\frac{C}{T - t} . 
	\end{equation}	
	Accordingly, using Lemma \ref{Lem-a-j=0} on the function $t \mapsto \alpha_2(t) \| B_2^* Z_d(t) \|_{U_2}^2	- \alpha_3(t) \| B_3^* Z_d(t)\|_{U_3}^2$, which goes to $0$ as $t \to -\infty$, and considering the coefficients in front of the constant term and in front of $e^{i \omega_3t }$ in \eqref{Expansion-B-i-B-j}, which appear only once in the expansion \eqref{Expansion-B-i-B-j} since $\omega_3 \notin W_3$,  we deduce
	\begin{align*}
			0 & =   \sum_{k\in D}  \| B_2^* w_{k,\ell_1} \|_{U_2}^2 
				 - 
				 \sum_{k\in D} \| B_3^* w_{k,\ell_1}\|_{U_3}^2, 
			 \\
			 0 & =  \sum_{k\in D} \| B_3^* w_{k,\ell_1}\|_{U_3}^2. 
	\end{align*}
	{This easily yields that $k_0 = \infty$}, and consequently that $\norm{B_2^* Z(t)}_{U_2} + \norm{B_3^* Z(t)}_{U_3}  = 0$ for all $t \in \R$, and concludes the proof of Lemma \ref{Lemma-H-Finite-Dim-3}.
	\end{proof}
	%
	%
	\medskip
	
	{\bf End of the proof of Theorem \ref{Thm-Main-Gal}.}
	We choose the coefficients $(\omega_1, \omega_2, \omega_3) \in \R^3$ such that the assumptions of Lemma \ref{Lemma-A-SelfAdj-Gal} are satisfied in the case of a self-adjoint operator, or such that the assumptions of Lemma \ref{Lemma-H-Finite-Dim-3} are satisfied when considering the case of $H$ of finite dimension. According to Lemma \ref{Lemma-A-SelfAdj-Gal} and \ref{Lemma-H-Finite-Dim-3}, if $I_{i,j}$ is of positive measure for some $i,j \in \{1, 2,3\}$ with $i \neq j$, taking $\ell \in \{1, 2, 3\} \setminus \{i,j\}$, only two cases arise: 
	\begin{itemize}
		\item If $t \mapsto \| B_\ell^* Z(t)\|_{U_{\ell}}^2$ is identically zero, the observability property \eqref{Obs-T=0} implies that $Z = 0$ identically, which corresponds to a minimizer for $J$ only in the case $y_0 = 0$, which can be steered to $0$ by keeping all the controls equal to $0$ at all times.
		\item If $t \mapsto \| B_\ell^* Z(t)\|_{U_{\ell}}^2$ is not identically zero, since it is an analytic function, its zero set has no accumulation point and thus 
		$$
			\hbox{a.e. } t \in (0,T), \quad 
			\alpha_{\ell}(t) \| B^*_\ell Z(t) \|_{U_{\ell}}^2
			> 
			\max \{\alpha_i(t) \| B_i^* {Z}(t)\|_{U_i}^2, \alpha_j(t) \| B_j^* {Z}(t)\|_{U_j}^2\}.
		$$
	\end{itemize}
	Accordingly, except in the trivial case $Z_T =0$, we have the following:
	\begin{equation}
		\label{Cond-For-Eul-Lag}
		\hbox{a.e. } t \in (0,T),\, \exists ! \ell \in \{1, 2, 3\}, \hbox{ sucht that }
		\alpha_{\ell}(t) \| B^*_\ell Z(t) \|_{U_{\ell}}^2
			> 
		\max_{i \neq \ell} \{\alpha_i(t) \| B_i^* {Z}(t)\|_{U_i}^2\}.
	\end{equation}
	 We can then write the Euler-Lagrange equation satisfied by a minimizer $Z_T$ of $J$, and obtain that, setting for each $i \in \{1, 2, 3\}$,
	$$
		u_i(t) 
		= 
		\left\{ 
			\begin{array}{ll} 
				\ds\alpha_i(t) B_i^* Z(t) & \hbox{ when } \alpha_i(t) \| B_i^* Z(t) \|_{U_i}^2 > \displaystyle \max_{j \neq i} \{\alpha_j(t) \| B_j^* Z(t) \|_{U_j}^2\}, 
				\\
				\ds \,0  &\hbox{ else},
			\end{array}
		\right.
	$$
	the corresponding solution $y$ of \eqref{Main-Eq-u-1-2-3} satisfies $y(T) = 0$ while the controls $u_1$, $u_2$, $u_3$ satisfy the switching condition \eqref{Def-Switching-3}.

\section{Examples}
\label{Sec-Example}

\subsection{Examples in finite dimension}
\label{Subsec-Examples-Finite-Dim}

Theorems \ref{Thm-Main} and \ref{Thm-Main-Gal} have many interesting consequences even for finite dimensional systems. Let us give below some examples. 

\paragraph{Example 1: General matrix $A$.} Let us fixed $H = \R^d$ for $d \in \N^*$ and $A$ a $d\times d$ matrix. Then it is clear that the control system
\begin{equation}
	\label{Ex-FD-Gal-A}
	y' +A y  = \left( \begin{array}{c} u_1 \\ u_2 \\ \vdots \\ u_d \end{array} \right), {\quad t \in (0,T)}, \qquad y(0) = y_0 \in {\R^d}, 
\end{equation}
is exactly controllable at any time $T$. Indeed, controllability can be achieved as follows: given $y_0$ and $y_1$ in $\R^d$, we take $y$ a smooth function of time with values in $\R^d$ such that $y(0) = y_0$ and $y(T) = y_1$, and simply set $u = y' + A y$. 

Therefore, it is clear that Theorem \ref{Thm-Main-Gal} applies when considering the operators $B_i u_i = u_i e_i$ for $i \in \{1, \cdots, d\}$, where $e_i$ is the vector of $\R^d$ whose $i$-th component equals $1$ and all the others vanish. We thus get the following result: 
\begin{theorem}
	\label{Thm-FD-Gal-A}
	Let $d \in \N^*$, $H = \R^d$ and $A$ a $d\times d$ matrix. Then for any $y_0 \in \R^d$, there exist $d$ control functions $u_i \in L^2(0,T; \R)$ such that the controlled trajectory of \eqref{Ex-FD-Gal-A} satisfies $y(T) = 0$ and with control functions satisfying condition \eqref{Def-Switching-3}, \emph{i.e.} such that almost everywhere in $(0,T)$, at most one of the controls $u_i(t)$ for $i \in \{1, \cdots, d\}$ is non-zero.
\end{theorem}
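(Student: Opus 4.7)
The plan is to verify that Theorem \ref{Thm-FD-Gal-A} is a direct consequence of Theorem \ref{Thm-Main-Gal} applied with $n = d$, once the prerequisites are checked. The setup is as follows: take $H = \R^d$ (which is finite-dimensional, hence fitting the second structural assumption of Theorem \ref{Thm-Main-Gal}), take $U = \R^d$, and take the isomorphism $\pi : \R \times \cdots \times \R \to \R^d$ to be the identity, so that the decomposition \eqref{Def-B1-B2-B3} becomes $B(u_1, \dots, u_d) = \sum_{i=1}^d u_i e_i$, which exactly identifies each $B_i : \R \to \R^d$ with $u_i \mapsto u_i e_i$ and makes the full control operator $B \in \mathscr{L}(\R^d, \R^d)$ equal to the identity.

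The second step is to verify that the system \eqref{Main-Eq-u} with the above choice of $B$ is null-controllable in arbitrary small times. Since $B = \Id$, this is essentially the observation already recorded in the paragraph preceding the statement: given $T > 0$ and $y_0 \in \R^d$, picking any smooth $y : [0,T] \to \R^d$ with $y(0) = y_0$ and $y(T) = 0$ and then defining $u = y' + A y \in L^2(0,T; \R^d)$ drives $y_0$ to $0$ in time $T$. A standard choice (for instance $y(t) = (1 - t/T) y_0$) yields the required linear bound $\|u\|_{L^2(0,T;\R^d)} \leq C_T \|y_0\|_{\R^d}$, so that \eqref{Bound-Control} holds and, equivalently, the observability estimate \eqref{Obs-T=0} is in force.

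With both structural hypotheses and the controllability assumption verified, Theorem \ref{Thm-Main-Gal} applies directly: for every $T > 0$ and every $y_0 \in \R^d$ there exist control functions $u_i \in L^2(0,T; \R)$, $i \in \{1, \dots, d\}$, such that the solution $y$ of \eqref{Ex-FD-Gal-A} satisfies $y(T) = 0$ and the switching condition \eqref{Def-Switching-3} holds, which in this scalar setting is exactly the condition that at almost every $t \in (0,T)$ at most one of the scalars $u_1(t), \dots, u_d(t)$ is nonzero. There is essentially no obstacle here — the only substantive point is to recognize that the trivially available controllability of a fully actuated finite-dimensional system, combined with the isomorphism decomposition, places us squarely within the scope of Theorem \ref{Thm-Main-Gal}; the construction of the switching controls themselves is then handled by the minimization of the functional \eqref{Def-J-H-3-Controls} (extended to $n = d$) and by Lemma \ref{Lemma-H-Finite-Dim-3} (extended likewise), with frequencies $\omega_i$ chosen successively outside the appropriate exceptional sets built from the spectrum of $A^*$.
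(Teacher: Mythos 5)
Your proposal is correct and follows exactly the paper's own route: it observes that the fully actuated system \eqref{Ex-FD-Gal-A} is trivially null-controllable (via $u = y' + Ay$ for a smooth trajectory joining $y_0$ to $0$), decomposes $B = \Id$ as $\sum_i B_i u_i$ with $B_i u_i = u_i e_i$, and then invokes Theorem \ref{Thm-Main-Gal}. No gaps.
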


This result can be applied for instance to the following case, which corresponds to the space semidiscretization of the $1$-d heat equation on $(0,L)$ with homogeneous Dirichlet boundary conditions at $x = 0$ and $x = L$:
\begin{equation}
	\label{Disc-Heat-Equation}
	\left\{ \begin{array}{ll}
	\ds y_j'  - \frac{1}{h^2}( y_{j+1} - 2 y_j + y_{j-1} ) = u_j, \quad &  t \in (0,T),\, j \in \{1, \cdots, d\}, 
	\\
	y_0(t) = y_{d+1}(t) = 0, & t \in (0,T),
	\\
	\ds y_j(0) = y^0_j & j \in \{1, \cdots, d\}, 
	\end{array}
	\right.
\end{equation}
where $h>0$ is a (small) parameter.
Indeed, equation \eqref{Disc-Heat-Equation} can be seen as the finite difference approximation of the heat equation 
\begin{equation}
	\label{1-d-Heat-Equation}
	\left\{ \begin{array}{ll}
	\ds \partial_t y - \partial_{xx} y = u, \quad & t \in (0,T), \, x \in (0,L),
	\\
	y(t,0) = y(t,L) = 0, & t \in (0,T),
	\\
	\ds y(0, x) = y_0(x) & x \in (0,L),
	\end{array}
	\right.
\end{equation}
choosing the parameter $h$ in \eqref{Disc-Heat-Equation} of the form $h = L/(d+1)$. Theorem \ref{Thm-FD-Gal-A} then yields that \eqref{Disc-Heat-Equation} can be controlled to zero with controls $u_i \in L^2(0,T; \R)$ for each $i \in \{1, \cdots, d\}$ such that at any time, only one of the controls $u_i$ is active. 

It is not clear how that process can pass to the limit as $d \to \infty$, and this is an interesting open question. 


\paragraph{Example 2: General matrices $(A,B)$ satisfying Kalman condition.} 

If $A$ is a $d \times d$ matrix and $B$ is a $d \times n$ matrix, it is well known (see \emph{e.g.} \cite{TWBook}) that the system \eqref{Main-Eq-u} is controllable if and only if the following Kalman condition is satisfied: 
\begin{equation}
	\label{Kalman-Cond}
	{\hbox{Rank}} (B \,,  A B\, , \, A^2 B  \, \cdots , \, A^{d-1} B ) = d. 
\end{equation}
Now, we have chosen $B$ under the form of a $d \times n$ matrix, meaning that the control function $u$ belongs to $u \in L^2(0,T; \R^n)$. As before, when $n \geq 2$, it is interesting to write
\begin{equation}
	B u = \sum_{i = 1}^n B_i u_i, \quad \hbox{ where } B_i \hbox{ is the $i$-th column of $B$}.
\end{equation}
Applying then Theorem \ref{Thm-Main-Gal}, we get the following result: 
\begin{theorem}
	\label{Thm-FD-Kalman}
	Let $A$ be a $d \times d$ matrix and $B$ be a $d \times n$ matrix such that the Kalman rank condition \eqref{Kalman-Cond} holds, and let $B_i$ denote the $i$-th column of the matrix $B$. Then for any $y_0 \in \R^d$, there exist $n$ control functions $u_i \in L^2(0,T; \R)$ such that the controlled trajectory of \eqref{Main-Eq-u-1-2-3} satisfies $y(T) = 0$ and with control functions satisfying condition \eqref{Def-Switching-3}, \emph{i.e.} such that almost everywhere in $(0,T)$, at most one of the controls $u_i(t)$ for $i \in \{1, \cdots, d\}$ is non-zero.
\end{theorem}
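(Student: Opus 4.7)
The proof is essentially a direct application of Theorem \ref{Thm-Main-Gal} to the finite-dimensional setting, once the data are identified in the framework of that theorem. The plan is to verify the three hypotheses of Theorem \ref{Thm-Main-Gal}: the structural assumption on $H$, the product structure of the control space, and the null-controllability of \eqref{Main-Eq-u} in arbitrary small times.

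First, I would set $H = \R^d$, which is a finite dimensional vector space, so the second bullet of the hypotheses of Theorem \ref{Thm-Main-Gal} is satisfied. I would then take $U = \R^n$ and $U_i = \R$ for each $i \in \{1, \dots, n\}$, with $\pi : U_1 \times \cdots \times U_n \to U$ the canonical isomorphism $(u_1, \dots, u_n) \mapsto (u_1, \dots, u_n)^\top$. Under this identification, the operator $B_i \in \mathscr{L}(U_i, H) = \mathscr{L}(\R, \R^d)$ defined by \eqref{Def-B1-B2-B3} is exactly the map $u_i \mapsto u_i B_i$, where $B_i$ is interpreted as the $i$-th column of the matrix $B$. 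The controlled equation \eqref{Main-Eq-u} with control $u \in L^2(0,T;\R^n)$ is then precisely equivalent to \eqref{Main-Eq-u-1-2-3} with the decomposition $u = (u_1, \dots, u_n)$.

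Next, I would verify that system \eqref{Main-Eq-u} is null-controllable in arbitrary small times. This is the classical statement that, in finite dimension, the Kalman rank condition \eqref{Kalman-Cond} is equivalent to exact controllability at every positive time $T$, and in particular implies null-controllability with the quantitative estimate \eqref{Bound-Control}; see for instance \cite{TWBook}. Since $H$ is finite dimensional, null-controllability and exact controllability coincide, and both are available for every $T > 0$ as soon as \eqref{Kalman-Cond} holds.

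With the hypotheses of Theorem \ref{Thm-Main-Gal} verified, the conclusion is immediate: for any $T>0$ and any $y_0 \in \R^d$, there exist $n$ control functions $u_i \in L^2(0,T; \R)$ such that the solution $y$ of \eqref{Main-Eq-u-1-2-3} satisfies $y(T) = 0$ while the switching condition \eqref{Def-Switching-3} holds almost everywhere in $(0,T)$. There is no genuine obstacle here; the only point that requires a word of justification is the equivalence between the Kalman rank condition and the null-controllability hypothesis of Theorem \ref{Thm-Main-Gal}, which is standard and does not depend on the choice of the decomposition of $B$ into the columns $B_i$.
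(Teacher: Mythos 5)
Your proposal is correct and follows exactly the route taken in the paper: identify $H=\R^d$, $U_i=\R$ with the canonical isomorphism so that the $B_i$ are the columns of $B$, invoke the classical equivalence of the Kalman rank condition with exact (hence null-) controllability at every $T>0$, and apply Theorem \ref{Thm-Main-Gal}. Nothing further is needed.
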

Again, a nice application is given by the space semi-discretization of some PDE, for instance of the wave equation. Indeed, if we consider the wave equation 
\begin{equation}
	\label{1-d-Wave-Equation}
	\left\{ \begin{array}{ll}
	\ds \partial_{tt} y - \partial_{xx} y = u, \quad & t \in (0,T), \, x \in (0,L),
	\\
	y(t,0) = y(t,L) = 0, & t \in (0,T),
	\\
	\ds (y(0, x), \partial_t y(0,x)) = (y_0(x),y_1(x)) & x \in (0,L), 
	\end{array}
	\right.
\end{equation}
its finite difference semi-discretization is given by 
\begin{equation}
	\label{Disc-Wave-Equation}
	\left\{ \begin{array}{ll}
	\ds y_j''  - \frac{1}{h^2}( y_{j+1} - 2 y_j + y_{j-1} ) = u_j, \quad &  t \in (0,T),\, j \in \{1, \cdots, d\}, 
	\\
	y_0(t) = y_{d+1}(t) = 0, & t \in (0,T),
	\\
	\ds (y_j(0), y_j'(0)) = (y_{j}^0, y_{j}^1) & j \in \{1, \cdots, d\}, 
	\end{array}
	\right.
\end{equation}
where $h = L/(d+1)$. It is clear that the system \eqref{Disc-Wave-Equation} is controllable in any arbitrary time, so that Theorem \ref{Thm-FD-Kalman} applies immediately and provides controls $u_i \in L^2(0,T)$ for all $i \in \{1, \cdots, d\}$ such that at all times only one of the control is active. 

Here again, it is completely unclear how this process can pass to the limit as $d \to \infty$. It is even probably more difficult {to analyze} than in the previous example since the limit equation \eqref{1-d-Wave-Equation} does not correspond to an analytic semigroup. Still, the recent works on sparse optimal controls for the wave equation, see in particular \cite{Kunisch-Trautman-Vexler}, may yield some insights on this problem. 

\subsection{Distributed control of parabolic systems}
\label{Subsec-Ex-1}
To give a non-trivial PDE example, {let us consider $\Omega$ a smooth bounded domain of  $\mathbb{R}^N$ ($N \geq 1$), an open subset $\mathcal{O} \subset \Omega$} and  the following parabolic system: 
\begin{equation}
	\label{ParaSys-Ex1}
	\left\{
		\begin{array}{ll}
			\ds \partial_t y - D \Delta y + P y = 1_\mathcal{O} \left( \begin{array}{c} u_1 \\ u_2 \end{array} \right), & \hbox{ in } (0,T) \times \Omega, 
			\\
			y = 0 & \hbox{ on } (0,T) \times \partial \Omega, 
			\\
			y(0, \cdot) = y_0 & \hbox{ in } \Omega, 
		\end{array}
	\right.
\end{equation}
where 
$$
	y =  \left( \begin{array}{c} y_1 \\ y_2 \end{array} \right), 
	\quad
	D = \left( \begin{array}{cc} d_1 & 0 \\ 0 & d_2 \end{array} \right), \hbox{ with } d_1, d_2 >0, 
$$
and $P = P(x) \in L^\infty(\Omega; S_{2}^+(\R))$, where $ S_{2}^+(\R)$ denotes the set of symmetric positive definite $2\times 2$ matrices with real coefficients. Here, the control 
$$
	u =  \left( \begin{array}{c}u_1 \\ u_2 \end{array} \right), 
$$
acts on the system \eqref{ParaSys-Ex1} on $\mathcal{O}$ through the multiplication by the indicator function $1_\mathcal{O}$ of the subset $\mathcal{O}$.

System \eqref{ParaSys-Ex1} fits into the framework of Theorem \ref{Thm-Main}, by setting 
\begin{equation}
	\label{OpA-Ex1}
	A =  - D \Delta_x + P, \quad \hbox{ in } H = (L^2(\Omega))^2 \hbox{ with domain } \mathscr{D}(A) = (H^2 \cap H^1_0(\Omega))^2,
\end{equation}
and
$$
	B u =  1_{\mathcal{O}} \left( \begin{array}{c} u_1 \\ u_2 \end{array} \right) \hbox{ for } u =  \left( \begin{array}{c} u_1 \\ u_2 \end{array} \right), \quad U = (L^2(\mathcal{O}))^2.
$$
Indeed, the operator $A$ in \eqref{OpA-Ex1} is obviously self-adjoint with compact resolvent. Besides, the following result is a straightforward consequence of the Carleman estimates in \cite{FursikovImanuvilov}:
\begin{proposition}
	\label{Prop-Ex1-Cont}
	System \eqref{ParaSys-Ex1} is null-controllable in arbitrarily small times with control functions $u$ in  $ L^2(0,T; (L^2(\mathcal{O}))^2)$.
\end{proposition}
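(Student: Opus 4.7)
The plan is to use the standard duality: by \cite[Theorem 11.2.1]{TWBook}, null-controllability of \eqref{ParaSys-Ex1} in time $T$ with controls in $L^2(0,T; (L^2(\mathcal{O}))^2)$ is equivalent to the observability estimate
$$
    \norm{z(0,\cdot)}_{(L^2(\Omega))^2}^2 \leq C_T \int_0^T \!\!\int_\mathcal{O} ( |z_1|^2 + |z_2|^2 ) \, dx \, dt
$$
for the solutions $z = (z_1, z_2)$ of the adjoint system
$$
    -\partial_t z - D \Delta z + P z = 0 \hbox{ in } (0,T)\times\Omega,
    \qquad z = 0 \hbox{ on } (0,T)\times\partial\Omega,
    \qquad z(T,\cdot) = z_T \in (L^2(\Omega))^2,
$$
where I have used that $P^T = P$ since $P$ takes values in $S_2^+(\R)$. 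The whole task thus reduces to proving this observability estimate for arbitrarily small $T>0$.

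The core step is to apply the Fursikov--Imanuvilov global Carleman estimate \emph{separately} to each scalar component $z_i$, viewed as a solution of the scalar heat equation $-\partial_t z_i - d_i \Delta z_i = -(Pz)_i$ with homogeneous Dirichlet data and $L^2$ source term. With the usual Carleman weights $\alpha, \xi$ built from $\mathcal{O}$ and a suitable auxiliary function with no critical point outside $\mathcal{O}$, one gets, for $s$ large enough and for $i \in \{1,2\}$,
$$
    \int_0^T \!\!\int_\Omega s^3 \xi^3 e^{-2s\alpha} |z_i|^2 \, dx \, dt
    \leq C \int_0^T \!\!\int_\Omega e^{-2s\alpha} |(Pz)_i|^2 \, dx \, dt + C \int_0^T \!\!\int_\mathcal{O} s^3 \xi^3 e^{-2s\alpha} |z_i|^2 \, dx \, dt.
$$
Summing over $i$ and using $|(Pz)_i|^2 \leq \|P\|_{L^\infty}^2(|z_1|^2 + |z_2|^2)$, the coupling contribution is absorbed into the left-hand side by choosing $s$ sufficiently large: indeed $\xi$ is bounded below by a positive constant on $(0,T)\times \Omega$ (depending on $T$ and on the auxiliary function), so that $s^3 \xi^3 \geq 2C\|P\|_{L^\infty}^2$ on $(0,T)\times \Omega$ for $s$ large. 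A standard dissipation argument on $(0, T/2)$ combined with the resulting weighted interior estimate on a middle time slab then converts this Carleman bound into the desired observability estimate on $\norm{z(0,\cdot)}_{(L^2(\Omega))^2}$.

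The only potential obstacle in such a Carleman-based argument for coupled parabolic systems is whether the coupling terms can actually be absorbed; here it is essentially trivial because the coupling via $P$ is of order zero and $L^\infty$-bounded, because the diffusion matrix $D$ is diagonal with positive entries (so that each scalar component admits a Carleman estimate with the same pair of weights $\alpha, \xi$), and because each component of $z$ is directly observed on the common set $\mathcal{O}$. In particular, no spectral or unique continuation assumption on $P$ is required, which is why this is a \emph{straightforward} consequence of \cite{FursikovImanuvilov}.
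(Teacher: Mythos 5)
Your proposal is correct and is exactly what the paper means when it states that Proposition \ref{Prop-Ex1-Cont} is ``a straightforward consequence of the Carleman estimates in \cite{FursikovImanuvilov}'': duality to the adjoint observability inequality, the Fursikov--Imanuvilov Carleman estimate applied componentwise with a common pair of weights, absorption of the bounded zero-order coupling $Pz$ by taking $s$ large, and the usual energy/dissipation step on a middle time slab. The paper gives no further detail, so there is nothing to compare beyond noting that you have filled in the standard argument correctly.
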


Thus, to apply Theorem \ref{Thm-Main}, a natural example consists in choosing 
\begin{equation}
	\label{Def-B1-B2-Ex1}
	B_1 u_1 =  1_\mathcal{O} \left( \begin{array}{c} u_1 \\ 0\end{array} \right), \quad U_1 = L^2(\mathcal{O}), 
	\quad \hbox{ and } \quad 
	B_2 u_2 =  1_\mathcal{O} \left( \begin{array}{c} 0 \\ u_2 \end{array} \right), \quad U_2 = L^2(\mathcal{O}).
\end{equation}
Theorem \ref{Thm-Main} then readily {implies}: 
\begin{theorem}
	\label{Thm-Ex-1}
	 System \eqref{ParaSys-Ex1} is null-controllable in arbitrary small times with controls $u_1$ and $u_2$ in $L^2(0,T; L^2(\mathcal{O}))$ satisfying the additional switching constraints \eqref{Def-Switching-Controls}.
\end{theorem}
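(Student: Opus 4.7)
The plan is to apply Theorem~\ref{Thm-Main} directly to $A$, $B_1$, $B_2$ defined in \eqref{OpA-Ex1} and \eqref{Def-B1-B2-Ex1}. This reduces to verifying three hypotheses: that $A$ is self-adjoint positive definite with compact resolvent on $H = (L^2(\Omega))^2$; that $U = (L^2(\mathcal{O}))^2$ is isomorphic to $U_1 \times U_2$ compatibly with \eqref{Def-B1-B2-Ex1}; and that system \eqref{ParaSys-Ex1} is null-controllable in arbitrary small times with controls in $L^2(0,T; U)$.

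First, I would check the structural properties of $A = -D \Delta + P$ with domain $\mathscr{D}(A) = (H^2 \cap H^1_0(\Omega))^2$. Self-adjointness on $H$ follows from the facts that $D = \operatorname{diag}(d_1,d_2)$ has positive diagonal entries, so that $-D\Delta$ is self-adjoint on the product domain, and that $P(x)$ is pointwise symmetric. Compactness of the resolvent is an immediate consequence of the compact embedding $H^2 \cap H^1_0(\Omega) \hookrightarrow L^2(\Omega)$. Positive definiteness follows from integration by parts combined with Poincaré's inequality:
\[
\langle A y, y \rangle_H = \int_\Omega \bigl( d_1 |\nabla y_1|^2 + d_2 |\nabla y_2|^2 + P(x) y \cdot y \bigr)\, dx \geq c\, \| y \|_H^2,
\]
using the pointwise positivity of $P(x) \in S_2^+(\R)$ together with the coercivity of the diffusion part.

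Next, the isomorphism requirement \eqref{Link-U1-2-U} is immediate, since $U = (L^2(\mathcal{O}))^2$ is canonically isomorphic to $L^2(\mathcal{O}) \times L^2(\mathcal{O}) = U_1 \times U_2$, and the operators $B_1, B_2$ in \eqref{Def-B1-B2-Ex1} are precisely the canonical components associated with this splitting. The remaining null-controllability requirement is exactly the content of Proposition~\ref{Prop-Ex1-Cont}, which itself follows from the global Carleman estimates of \cite{FursikovImanuvilov} applied component-by-component to the adjoint system of \eqref{ParaSys-Ex1}; no coupling difficulty arises because the control acts on both components simultaneously in $\mathcal{O}$.

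Once these three hypotheses are verified, Theorem~\ref{Thm-Main} produces switching controls $u_1, u_2 \in L^2(0,T; L^2(\mathcal{O}))$ satisfying \eqref{Def-Switching-Controls} that drive the solution of \eqref{ParaSys-Ex1} to zero at time $T$. There is essentially no technical obstacle here, since the proof amounts to a direct application of the abstract framework; the only point calling for some care is the positive definiteness of $A$, which Poincaré's inequality combined with the positivity of $D$ renders automatic regardless of any further assumption on $P$.
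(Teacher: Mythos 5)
Your proposal is correct and follows essentially the same route as the paper: verify that $A = -D\Delta + P$ is self-adjoint, positive definite with compact resolvent, identify $U \cong U_1 \times U_2$ via \eqref{Def-B1-B2-Ex1}, invoke Proposition~\ref{Prop-Ex1-Cont} (Carleman estimates of \cite{FursikovImanuvilov}) for the unconstrained null-controllability, and then apply Theorem~\ref{Thm-Main}. One small caution on your closing remark: positive definiteness of $A$ is \emph{not} automatic ``regardless of any further assumption on $P$'' --- your own displayed inequality uses $P(x)\in S_2^+(\R)$, and for a merely bounded symmetric $P$ one needs the shifting argument mentioned in the paper's remark following the theorem.
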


\begin{remark}
{By a shifting argument, Theorem \ref{Thm-Ex-1} remains true if we only consider $P$ a bounded symmetric matrix. }
\end{remark}

Here, we would like to emphasize that our results are different from the ones in which the controls may act only on one component. Indeed, in such case, it is clear that more conditions are needed, since when $P = 0$ and controlling on only one component, the second component will be free of control. 

Of course, when $P = 0$, it is easy to check that one can control system \eqref{ParaSys-Ex1} with controls having a switching structure, since one can control the first component $y_1$ to $0$ at time $T/2$ by keeping the control $u_2 = 0$ in $(0,T/2)$ and then control the second component $y_2$ to $0$ on $(T/2,T)$ by keeping the control $u_1 = 0$ in $(T/2, T)$. However, when $P \neq 0$, this strategy does not seem to be applicable directly.

On the other hand, when one wants to control a system through one component only, it is clear that the coupling terms should play an important role, see for instance \cite{DuprezLissy}.

Therefore, our results really comes in between the questions of controllability of parabolic systems when the controls act on all the components of the state and when the controls may act only on one (or some) component of the state.

\subsection{Distributed controls of $3$-d Stokes equations.}
\label{Subsec-distrib-3-d-Stokes}
Let $\Omega$ be a smooth bounded domain of $\R^3$ and let us consider the following Stokes equation: 
\begin{equation}
	\label{Eq-Stokes-3-d}
	\left\{
		\begin{array}{ll}
			\partial_t y - \Delta y + \nabla p = 1_\mathcal{O} u & \hbox{ in } (0,T) \times \Omega, 
			\\
			\div y  = 0 & \hbox{ in } (0,T) \times \Omega, 
			\\
			y = 0 & \hbox{ on } (0,T) \times \partial \Omega, 
			\\
			y(0, \cdot) = y_0 & \hbox{ in } \Omega.
		\end{array}
	\right.
\end{equation}
Here, $y = y(t,x) \in \R^3$ denotes the velocity field of an incompressible fluid, $p$ is the pressure, and the control $u$ acts through the non-empty open subset $\mathcal{O}$ of $\Omega$.

This example fits the setting of Theorem \ref{Thm-Main-Gal} by choosing the state space 
\begin{equation}
	\label{Def-V-0-n}
	H = V^0_n(\Omega) = \{y \in L^2(\Omega; \R^3), \, \div y = 0 \hbox{ in } \Omega \hbox{ and } y\cdot {\bf n}_x = 0 \hbox{ on } \partial \Omega\},
\end{equation}
the operator $A$ as
\begin{equation}
	\label{Stokes-Op}
	A = - \mathbb{P}  \Delta, \hbox{ with } \mathscr{D}(A) = \{ y \in H^2 \cap H^1_0(\Omega; \R^3), \, \div y = 0 \hbox{ in } \Omega\} \hbox{ in } H, 
\end{equation}
where ${\bf n}_x$ is the outward normal to $x \in \partial \Omega$, $\mathbb{P}$ is the orthogonal projection on $V^0_n(\Omega)$ in $L^2(\Omega; \R^3)$, 
and the control operator 
$$
	B u = 1_\mathcal{O} \left( \begin{array}{c} u_1 \\ u_2 \\ u_3 \end{array} \right), \quad \hbox{ with } U = (L^2(\mathcal{O}))^3.
$$
It is then natural to define the operators $B_1$, $B_2$ and $B_3$ as follows: 
\begin{multline}
	\label{Choice-B-Stokes-3d}
	B_1 u_1 = 1_\mathcal{O} \left( \begin{array}{c} u_1 \\ 0 \\ 0 \end{array} \right), 
	\quad
	B_2 u_2 = 1_\mathcal{O} \left( \begin{array}{c} 0 \\ u_2 \\ 0 \end{array} \right), 
	\quad
	B_3 u_3 = 1_\mathcal{O} \left( \begin{array}{c} 0 \\ 0 \\ u_3 \end{array} \right), 
	\\
	\hbox{ with } U_1 = U_2 = U_3 = L^2(\mathcal{O}).
\end{multline}
Indeed, we have the following results:
\begin{itemize}
	\item The operator $A$ is self-adjoint on $V^0_n(\Omega)$, see \emph{e.g.} \cite[Lemma IV.5.4]{Boyer-Fabrie-Book}; 
	\item	The Stokes problem \eqref{Eq-Stokes-3-d} is null-controllable in arbitrary small times, see \cite{Imanuvilov2001}; 
\end{itemize}
We can therefore readily apply Theorem \ref{Thm-Main-Gal}: 
\begin{theorem}
	\label{Thm-Stokes-3D}
	 Given any $y_0 \in V^0_n(\Omega)$, {there exist control functions $u_1$, $u_2$ and $u_3$ in $L^2(0,T; L^2(\mathcal{O}))$} such that the controlled trajectory $y$ of \eqref{Eq-Stokes-3-d} satisfies $y(T ) = 0$ in $\Omega$ and with control functions $u_1$, $u_2$ and $u_3$ satisfying condition \eqref{Def-Switching-3}, \emph{i.e.} such that almost everywhere in $(0,T)$, at most one of the controls $u_1(t)$, $u_2(t)$, $u_3(t)$ is non-zero.
\end{theorem}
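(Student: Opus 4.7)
The plan is to verify that the abstract framework of Theorem \ref{Thm-Main-Gal} applies to the setting \eqref{Eq-Stokes-3-d}--\eqref{Choice-B-Stokes-3d}, so that the desired conclusion follows as an immediate corollary with $n = 3$. First I would recast \eqref{Eq-Stokes-3-d} in the abstract form \eqref{Main-Eq-u-1-2-3} by applying the Leray projector $\mathbb{P}$ onto $V^0_n(\Omega)$; this eliminates the pressure $\nabla p$ and yields $y' + A y = \sum_{i=1}^3 B_i u_i$ in the state space $H = V^0_n(\Omega)$, where $A$ is the Stokes operator \eqref{Stokes-Op} and each $B_i u_i = \mathbb{P}(1_\mathcal{O} u_i e_i)$ with $e_i$ the $i$-th canonical basis vector of $\R^3$.

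Next I would check that $A$ fits the first alternative in the hypotheses of Theorem \ref{Thm-Main-Gal}: self-adjointness of $A$ on $V^0_n(\Omega)$ is \cite[Lemma IV.5.4]{Boyer-Fabrie-Book}, positive definiteness is a direct consequence of the Poincar\'e inequality on $H^1_0(\Omega;\R^3)$, and compactness of the resolvent follows from the compact embedding $\mathscr{D}(A) \hookrightarrow H$ provided by the Rellich--Kondrachov theorem. Null-controllability of \eqref{Eq-Stokes-3-d} in arbitrarily small times with distributed controls in $L^2(0,T; (L^2(\mathcal{O}))^3)$ is the main result of \cite{Imanuvilov2001}, which by the standard HUM duality is equivalent to the observability inequality \eqref{Obs-T=0} for the adjoint Stokes system (and the adjoint system coincides with the direct one thanks to self-adjointness).

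Finally, with the trivial identification $\pi : L^2(\mathcal{O}) \times L^2(\mathcal{O}) \times L^2(\mathcal{O}) \to (L^2(\mathcal{O}))^3$ being the obvious isomorphism and the decomposition \eqref{Choice-B-Stokes-3d} satisfying \eqref{Def-B1-B2-B3}, all the hypotheses of Theorem \ref{Thm-Main-Gal} with $n = 3$ are met, and its conclusion is precisely that of Theorem \ref{Thm-Stokes-3D}. I do not anticipate any substantial obstacle in this plan, since every step reduces to a classical property of the Stokes operator or to an existing reference. The only point worth double-checking is that the projected operators $B_i$ genuinely fit the framework of Theorem \ref{Thm-Main-Gal}; this is immediate here since $\mathbb{P}(1_\mathcal{O} u_i e_i) \in V^0_n(\Omega) = H$, so the unbounded-control generality allowed by Theorem \ref{Thm-Main-Gal} is not even needed in this application.
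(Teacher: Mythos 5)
Your proposal is correct and follows essentially the same route as the paper: recast \eqref{Eq-Stokes-3-d} abstractly via the Leray projector with $A$ the Stokes operator, invoke \cite[Lemma IV.5.4]{Boyer-Fabrie-Book} for self-adjointness and \cite{Imanuvilov2001} for small-time null-controllability, and apply Theorem \ref{Thm-Main-Gal} with $n=3$ and the decomposition \eqref{Choice-B-Stokes-3d}. Your explicit writing of $B_i u_i = \mathbb{P}(1_\mathcal{O} u_i e_i)$ and the remark that these are bounded operators into $H$ is a slightly more careful rendering of what the paper leaves implicit, but it is the same argument.
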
 
It is interesting to consider this case, since the controllability of the Stokes equation \eqref{Eq-Stokes-3-d} with controls having one or two vanishing components has been studied in the literature. In particular, it has been shown in \cite{Coron-Guerrero-09} that, given $\ell \in \{1, 2, 3\}$, system \eqref{Eq-Stokes-3-d} is null-controllable in arbitrary small times with controls $u \in L^2(0,T; (L^2(\mathcal{O}))^3)$ satisfying $u_\ell \equiv 0$. Besides, the result in \cite{Lions-Zuazua-96} shows that system \eqref{Eq-Stokes-3-d} may be not null-controllable (in fact, not even approximate controllable) in some specific geometric settings with controls having two vanishing components.

Note that the result in \cite{Coron-Lissy} about the null-controllability of the $3$-dimensional incompressible Navier-Stokes equation with controls having two vanishing components strongly uses the non-linear term in the Navier-Stokes equation in the spirit of the celebrated \emph{Coron's return method}, and thus does not apply to the linear problem \eqref{Eq-Stokes-3-d}.

\subsection{Boundary control of a system of coupled heat equations.}

This example is closely related to the one in Section \ref{Subsec-Ex-1}. Let us consider $\Omega$ a smooth bounded domain and  the following parabolic system: 
\begin{equation}
	\label{ParaSys-Ex3}
	\left\{
		\begin{array}{ll}
			\ds \partial_t y - D \Delta y + P y = 0 , & \hbox{ in } (0,T) \times \Omega, 
			\\
			y = u 1_\Gamma & \hbox{ on } (0,T) \times \partial \Omega, 
			\\
			y(0, \cdot) = y_0 & \hbox{ in } \Omega, 
		\end{array}
	\right.
\end{equation}
where 
$$
	y =  \left( \begin{array}{c} y_1 \\ y_2 \\ \vdots \\ y_n  \end{array} \right), 
	\quad
	D = \hbox{diag\, } (d_1, \cdots, d_n), \hbox{ with } d_i >0 \hbox{ for all } i \in \{1, \cdots, n\},
$$
and $P = P(x) \in L^\infty(\Omega; S_{n}^+(\R))$, where $ S_{n}^+(\R)$ denotes the set of symmetric positive definite $n\times n$ matrices with real coefficients. Here, the control 
$$
	u =  \left( \begin{array}{c}u_1  \\ \vdots \\ u_n\end{array} \right), 
$$
acts on the system \eqref{ParaSys-Ex3} on a non-empty open subset $\Gamma$ of the boundary $\partial\Omega$ through the multiplication by the indicator function $1_\Gamma$.

System \eqref{ParaSys-Ex3} fits into the framework of Theorem \ref{Thm-Main-Gal} by setting 
\begin{equation}
	\label{OpA-Ex3}
	A =  - D \Delta_x + P, \quad \hbox{ in } H = (L^2(\Omega))^n \hbox{ with domain } \mathscr{D}(A) = (H^2 \cap H^1_0(\Omega))^n,
\end{equation}
and {the control operator $B$ as follows}:
$$
	B u =  \tilde A \hbox{Dir}_\Gamma(u) \  \hbox{ for } u =  \left( \begin{array}{c} u_1  \\ \vdots \\ u_n \end{array} \right), \quad U = {(L^2(\Gamma))^n}, 
$$
where $ \hbox{Dir}_\Gamma : (L^2(\Gamma))^n \mapsto (L^2(\Omega))^n$ is the Dirichlet operator given by 
$$
	 \hbox{Dir}_\Gamma u = z, \, \hbox{ where $z$ solves }
	\left\{
		\begin{array}{ll}
			\ds  - D \Delta z + P z = 0 , & \hbox{ in }  \Omega, 
			\\
			\ds z = u 1_\Gamma & \hbox{ on } \partial \Omega, 
		\end{array}
	\right. 
$$
and $\tilde A$ denotes the extension of $A$ of domain $(L^2(\Omega))^n$ on $((H^2 \cap H^1_0(\Omega))^n)'$, see \cite[Proposition 3.4.5 and Section 10.7]{TWBook}.

Similarly as in Proposition \ref{Prop-Ex1-Cont}, one can show using classical Carleman estimates (see \cite{FursikovImanuvilov}) that:
\begin{proposition}
	\label{Prop-Ex3-Cont}
	System \eqref{ParaSys-Ex3} is null-controllable in arbitrarily small times with control functions $u = (u_1, \cdots, u_n)$ in  $ L^2(0,T; (L^2(\Gamma))^n)$.
\end{proposition}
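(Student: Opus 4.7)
The plan is to reduce the boundary-controllability of \eqref{ParaSys-Ex3} to the interior-controllability result already used, by the classical domain-extension trick in the spirit of the scalar case treated in \cite{FursikovImanuvilov}. First, I would choose a smooth bounded domain $\tilde\Omega$ such that $\Omega \subset \tilde\Omega$, $\partial\Omega \setminus \Gamma \subset \partial\tilde\Omega$, and $\mathcal{O} := \tilde\Omega \setminus \overline{\Omega}$ is a non-empty open subset of $\tilde\Omega$ glued to $\Omega$ exactly along the piece $\Gamma$. I would then extend $P$ to some $\tilde P \in L^\infty(\tilde\Omega; S_n^+(\R))$ with $\tilde P|_\Omega = P$, and extend $y_0$ by $0$ to $\tilde y_0 \in (L^2(\tilde\Omega))^n$.

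Next, I would apply the $n$-component analogue of Proposition \ref{Prop-Ex1-Cont} to the interior control problem
\begin{equation*}
\partial_t \tilde y - D \Delta \tilde y + \tilde P \tilde y = 1_\mathcal{O} \tilde u \text{ in } (0,T) \times \tilde\Omega, \qquad \tilde y = 0 \text{ on } (0,T) \times \partial\tilde\Omega, \qquad \tilde y(0, \cdot) = \tilde y_0,
\end{equation*}
obtaining a control $\tilde u \in L^2(0,T; (L^2(\mathcal{O}))^n)$ such that $\tilde y(T, \cdot) = 0$. This step is a direct consequence of the global Carleman estimate of \cite{FursikovImanuvilov}: since $D$ is diagonal with positive diagonal entries and $\tilde P$ is bounded, the coupling contributes only zeroth-order terms that can be absorbed by the Carleman weight (upon taking the Carleman parameter large enough), so the scalar estimate applies componentwise to yield an observability inequality for the adjoint of the above system.

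Finally, I would set $y := \tilde y|_{(0,T) \times \Omega}$ and define the boundary control $u$ to be the trace of $\tilde y$ on $(0,T) \times \Gamma$, extended by $0$ on $\partial\Omega \setminus \Gamma$. Since $\tilde u \in L^2(0,T; (L^2(\mathcal{O}))^n)$, standard parabolic regularity gives $\tilde y \in L^2(0,T; (H^1_0(\tilde\Omega))^n) \cap C([0,T]; (L^2(\tilde\Omega))^n)$, so the trace on $\Gamma$ is well defined in $L^2(0,T; (H^{1/2}(\Gamma))^n) \hookrightarrow L^2(0,T; (L^2(\Gamma))^n)$, providing the required regularity of $u$. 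On $\Omega$ the right-hand side of the extended equation vanishes because $\mathcal{O} \cap \Omega = \emptyset$, and the Dirichlet trace on $\partial\Omega$ coincides with $u$ on $\Gamma$ and with $0$ on $\partial\Omega \setminus \Gamma \subset \partial\tilde\Omega$, so $y$ indeed solves \eqref{ParaSys-Ex3} with control $u$ and $y(T, \cdot) = \tilde y(T, \cdot)|_\Omega = 0$. The only genuine work here is the interior Carleman estimate for the coupled system; but since we allow one independent control per equation, this is completely standard and does not involve the delicate spectral or algebraic considerations required when controlling such systems with fewer controls than equations.
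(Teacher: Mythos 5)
Your proof is correct and follows the same route the paper has in mind: for this statement the paper merely cites the classical Carleman estimates of \cite{FursikovImanuvilov}, and it explicitly invokes the very same extension/restriction argument when deriving the analogous boundary-controllability statement for the Stokes system, so your componentwise Carleman estimate for the extended interior problem plus restriction and trace is exactly the intended argument. The only cosmetic point is that to keep $\tilde\Omega$ smooth one should glue the added bump along an open piece $\Gamma_0$ with $\overline{\Gamma_0}\subset\Gamma$ rather than along all of $\Gamma$; the resulting trace control is then supported in $\Gamma_0\subset\Gamma$ and vanishes on the rest of $\partial\Omega$, so the conclusion is unaffected.
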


One can then readily apply Theorem \ref{Thm-Main-Gal}:
\begin{theorem}
	\label{Thm-Ex-3}
	 System \eqref{ParaSys-Ex3} is null-controllable in arbitrary small times with controls $u = (u_1, \cdots, u_n)$ in  $ L^2(0,T; (L^2(\Gamma))^n)$ satisfying the additional switching constraints \eqref{Def-Switching-3}.
\end{theorem}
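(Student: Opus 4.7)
The plan is to recast \eqref{ParaSys-Ex3} into the abstract framework of Theorem \ref{Thm-Main-Gal} using the operator $A$, the state space $H$, the control operator $B$, and the decomposition already set up around \eqref{OpA-Ex3}, and then apply Theorem \ref{Thm-Main-Gal} directly.

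First I would verify that the operator $A = -D\Delta + P$ with domain $(H^2 \cap H_0^1(\Omega))^n$ is self-adjoint, positive definite, with compact resolvent on $H = (L^2(\Omega))^n$ (equipped with a suitably weighted but equivalent inner product if needed to absorb the distinct diffusivities $d_i$). This follows from the coercivity and symmetry of the bilinear form associated to $D$ and $P$ on $(H_0^1(\Omega))^n$, coming from the positive definiteness of $D$ and the assumption $P \in L^\infty(\Omega; S_n^+(\R))$, combined with elliptic regularity and the compact embedding $H_0^1(\Omega) \hookrightarrow L^2(\Omega)$.

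Second, I would verify that the boundary control operator $B = \tilde A \operatorname{Dir}_\Gamma$ belongs to $\mathscr{L}(U, \mathscr{D}(A^*)')$ with $U = (L^2(\Gamma))^n$, which is the standard abstract reformulation of Dirichlet boundary control (see \cite[Proposition 3.4.5 and Section 10.7]{TWBook}), and that the natural decomposition with $U_i = L^2(\Gamma)$ and $B_i u_i = \tilde A \operatorname{Dir}_\Gamma(u_i e_i)$, where $e_i$ denotes the $i$-th vector of the canonical basis of $\R^n$, satisfies \eqref{Def-B1-B2-B3}. Combined with Proposition \ref{Prop-Ex3-Cont}, which provides null-controllability in arbitrarily small times for \eqref{ParaSys-Ex3} with $L^2(0,T; (L^2(\Gamma))^n)$ controls, this places us exactly in the scope of Theorem \ref{Thm-Main-Gal}.

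Applying Theorem \ref{Thm-Main-Gal} then produces control functions $u_1, \ldots, u_n \in L^2(0,T; L^2(\Gamma))$ such that the corresponding solution of \eqref{ParaSys-Ex3} satisfies $y(T) = 0$ while the switching condition \eqref{Def-Switching-3} holds, which is precisely the statement of Theorem \ref{Thm-Ex-3}. The main point requiring some care is the abstract reformulation of the boundary control: one has to identify the extension $\tilde A : H \to \mathscr{D}(A^*)'$ and the Dirichlet lifting $\operatorname{Dir}_\Gamma$ and check that $B$ faithfully encodes the boundary condition $y = u 1_\Gamma$ on $(0,T)\times\partial\Omega$ in the sense required by \eqref{Main-Eq-u}, but this is classical and uses exclusively the self-adjointness of $A$ already established in the first step. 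Once this translation is completed, the proof reduces entirely to an application of Theorem \ref{Thm-Main-Gal} and presents no further difficulty beyond what already appeared in the distributed-control example of Section \ref{Subsec-Ex-1}.
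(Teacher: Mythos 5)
Your proposal is correct and follows essentially the same route as the paper: cast \eqref{ParaSys-Ex3} into the abstract framework via $A=-D\Delta+P$ on $(L^2(\Omega))^n$, the unbounded control operator $B=\tilde A\,\mathrm{Dir}_\Gamma\in\mathscr{L}((L^2(\Gamma))^n,\mathscr{D}(A^*)')$ with the componentwise decomposition $B_iu_i$, invoke Proposition \ref{Prop-Ex3-Cont} for null-controllability, and apply Theorem \ref{Thm-Main-Gal}. (A minor remark: no weighted inner product is needed, since $D$ is diagonal with positive entries and $P(x)$ is symmetric, so $A$ is already self-adjoint for the standard inner product.)
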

Again, we emphasize that our results really complements the ones in which the controls act only on one component of the system, in which the situation is much more intricate since controllability results will depend on delicate coupling conditions, see for instance \cite{Ammar-Khodja-et-al-2016} and references therein. 

\subsection{Boundary control of $3$-d Stokes equations.}

Again, one can also consider Stokes equations, but this time controlled from the boundary. Using \cite{Imanuvilov2001} (see also \cite{FerCarGueImaPuel}), in a smooth bounded domain $\Omega \subset \R^3$, the $3$-d Stokes equations is null-controllable in any time $T$ through any non-empty open subset of its boundary. To be more precise, we let $\Omega$ be a smooth bounded domain of $\R^3$ and $\Gamma$ a non-empty open subset of $\partial \Omega$, and we consider the following Stokes equation: 
\begin{equation}
	\label{Eq-Stokes-3-d-Bound}
	\left\{
		\begin{array}{ll}
			\partial_t y - \Delta y + \nabla p = 0 & \hbox{ in } (0,T) \times \Omega, 
			\\
			\div y  = 0 & \hbox{ in } (0,T) \times \Omega, 
			\\
			y = 1_\Gamma(x) u & \hbox{ on } (0,T) \times \partial \Omega, 
			\\
			y(0, \cdot) = y_0 & \hbox{ in } \Omega,
		\end{array}
	\right.
\end{equation}
where $1_\Gamma$ is the indicator function of the set $\Gamma$, and $u$ is assumed to belong to $L^2(0,T; L^2(\Gamma; \R^3))$ and satisfy 
\begin{equation}
	\label{Incompress}
	\forall t \in (0,T), \quad \int_\Gamma u(t,x ) \cdot {\bf n}_x \, d\sigma = 0 
\end{equation}
where ${\bf n}_x$ is the outward normal to $\partial \Omega$ at $x \in \partial \Omega$. {Condition \eqref{Incompress} can be seen as a compatibility condition} with the divergence free condition $\div y = 0$ and can be obtained immediately by integrating it in $\Omega$.

Properly speaking, \cite{Imanuvilov2001} does not deal with boundary controls, but the following result can be easily obtained from \cite{Imanuvilov2001} using the classical extension/restriction argument to get controllability results with controls on the boundary:
\begin{theorem}[\cite{Imanuvilov2001}]
	{System \eqref{Eq-Stokes-3-d-Bound}} is null controllable in any time $T$. To be more precise, for all $T>0$, for any $y_0 \in V^0_n(\Omega)$, there exists a control function $u \in L^2(0,T; L^2(\Gamma; \R^3))$ satisfying \eqref{Incompress} such that the controlled {trajectory $y$ of \eqref{Eq-Stokes-3-d-Bound}} satisfies $y(T) = 0$ in $\Omega$.
\end{theorem}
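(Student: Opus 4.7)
The plan is to apply the classical extension/restriction argument to reduce the boundary controllability problem on $\Omega$ to the distributed controllability result of \cite{Imanuvilov2001} on a slightly larger domain.

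First, I would construct an auxiliary smooth bounded domain $\tilde\Omega \subset \R^3$ with $\Omega \subset \tilde\Omega$ such that $\partial\Omega \setminus \Gamma \subset \partial\tilde\Omega$ while $\Gamma$ lies in the interior of $\tilde\Omega$. Concretely, one glues a thin open set $\omega_\Gamma \subset \R^3 \setminus \bar\Omega$ to the outside of $\Gamma$, smoothing the junctions so that $\partial\tilde\Omega$ is $C^\infty$. I would then pick a non-empty open subset $\tilde{\mathcal{O}} \subset \omega_\Gamma$, disjoint from $\bar\Omega$, to serve as the distributed control region.

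Second, I would extend the initial datum by zero: $\tilde y_0 := y_0$ in $\Omega$ and $\tilde y_0 := 0$ in $\tilde\Omega \setminus \Omega$. Since $y_0 \in V^0_n(\Omega)$ satisfies the normal-trace condition $y_0 \cdot {\bf n}_x = 0$ on $\partial\Omega$ (and in particular on $\Gamma$), this extension is divergence-free on all of $\tilde\Omega$ in the distributional sense and satisfies the correct normal-trace condition on $\partial\tilde\Omega$, so that $\tilde y_0 \in V^0_n(\tilde\Omega)$.

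Third, I would invoke the distributed Stokes null-controllability of \cite{Imanuvilov2001} on $\tilde\Omega$: there exists $\tilde u \in L^2(0,T; (L^2(\tilde{\mathcal{O}}))^3)$ and an associated controlled trajectory $(\tilde y, \tilde p)$ of the Stokes system on $\tilde\Omega$ with initial datum $\tilde y_0$ and terminal condition $\tilde y(T,\cdot) = 0$ in $\tilde\Omega$. By the standard parabolic regularity for Stokes, $\tilde y$ lies in $C([0,T]; V^0_n(\tilde\Omega)) \cap L^2(0,T; H^1_0(\tilde\Omega;\R^3))$.

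Finally, I would set $y := \tilde y|_{(0,T)\times\Omega}$ and $p := \tilde p|_{(0,T)\times\Omega}$. Since $\tilde{\mathcal{O}} \cap \Omega = \emptyset$, the pair $(y,p)$ solves the homogeneous Stokes system on $\Omega$ with initial datum $y_0$ and terminal value $0$. On $\partial\Omega \setminus \Gamma \subset \partial\tilde\Omega$, the Dirichlet condition for $\tilde y$ yields $y = 0$; on $\Gamma$ we then define $u := \tilde y|_\Gamma$. Since $\Gamma$ lies in the interior of $\tilde\Omega$ and $\tilde y \in L^2(0,T; H^1(\tilde\Omega;\R^3))$, interior trace estimates give $u \in L^2(0,T; H^{1/2}(\Gamma;\R^3)) \hookrightarrow L^2(0,T; L^2(\Gamma; \R^3))$. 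The compatibility condition \eqref{Incompress} follows by integrating $\div y(t,\cdot) = 0$ over $\Omega$ for a.e. $t$ and using $y(t,\cdot) = 0$ on $\partial\Omega \setminus \Gamma$. The most delicate point is the divergence-freeness of $\tilde y_0$ across the artificial interface $\Gamma$, which is precisely what the $V^0_n$ normal-trace condition on $y_0$ guarantees; the rest is routine.
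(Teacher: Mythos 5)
Your proposal is correct and is essentially the paper's own argument: the paper merely states that the result ``can be easily obtained from \cite{Imanuvilov2001} using the classical extension/restriction argument,'' and your write-up is a faithful, detailed execution of exactly that argument (domain extension past $\Gamma$, zero-extension of $y_0$ in $V^0_n$, distributed control in $\tilde{\mathcal O}\subset\tilde\Omega\setminus\bar\Omega$, restriction and trace on $\Gamma$). The verification of the compatibility condition \eqref{Incompress} by integrating $\div y=0$ is also the right justification.
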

%
%
Because of condition \eqref{Incompress}, it is natural to decompose the space $\{ u \in L^2(\Gamma; \R^3): \ \int_\Gamma u(x ) \cdot {\bf n}_x \, d\sigma = 0 \}$ using tangential and normal components of $u$. Therefore, we choose a family of triplets $(e_1(x), e_2(x), {\bf n}_x)$ indexed by $x \in \Gamma$ such that for all $x \in \Gamma$, $(e_1(x), e_2(x), {\bf n}_x)$ is an orthonormal basis of $\R^3$, and we define $U_1 = U_2 = L^2(\Gamma; \R)$ and $U_3 = \{ u_3 \in L^2(\Gamma; \R) \hbox{ with } \int_\Gamma u_3 (x) d \sigma = 0\}$, also denoted by $ L^2_0(\Gamma; \R)$, and the isomorphism $\pi$ in \eqref{Link-U1-2-3-U} is then given by 
\begin{equation}
	\label{isomorphisms-Stokes}
	\pi : (u_1, u_2, u_3) \in U_1 \times U_2 \times U_3 \mapsto \left( x \mapsto (u_1(x) e_1(x) + u_2 (x) e_2(x) + u_3 (x) {\bf n}_x)\right).
\end{equation}
Now, as before, see e.g. \cite{Raymond-2007}, to properly define the operator $B$ in this case, we need to introduce the Dirichlet operator $D_\Gamma$ defined by 
$$
	D_\Gamma u = z, \hbox{ where $z$ solves }
	 \left\{
		\begin{array}{ll}
			-\Delta z + \nabla p = 0 &\hbox{ in } \Omega, 
			\\
			\hbox{div\,} z = 0 &\hbox{ in } \Omega, 
			\\
			z = 1_\Gamma u & \hbox{ on } \partial \Omega, 
		\end{array}
	 \right.
$$
and the operator $B$ is defined by 
$$
	B u = \tilde A \mathbb{P} D_\Gamma u, 
$$
where $\tilde A$ denotes the extension of the Stokes operator (defined in \eqref{Def-V-0-n}--\eqref{Stokes-Op}) from $V^0_n(\Omega)$ to $\mathscr{D}(A)'$ and $\mathbb{P}$ denotes the Leray projection, that is the orthogonal projection on $V^0_n(\Omega)$ in $L^2(\Omega; \R^3)$, . The full system \eqref{Eq-Stokes-3-d-Bound} can then be written as 
\begin{equation}
	\label{Eq-Stokes-Abstract}
	\left\{
		\begin{array}{ll}
			\mathbb{P}y' + \tilde A \mathbb{P}y = B u, \quad & t \in (0,T), 
			\\ 
			\mathbb{P}y(0) = \mathbb{P}y_0, 
			\\
			(I - \mathbb{P}) y = (I - \mathbb{P}) D_\Gamma u, \quad &t \in (0,T).
		\end{array} 
	\right.
\end{equation}
Accordingly, the quantities $\mathbb{P} y$ and $( I - \mathbb{P}) y$ should be handled separately. In particular, see \cite[Theorem 2.3 and Theorem 3.1]{Raymond-2007} for $u \in L^2(0,T; L^2(\Gamma; \R^3))$ satisfying \eqref{Incompress}, the solution $y$ of \eqref{Eq-Stokes-Abstract} with initial datum $\mathbb{P} y_0 \in V^0_n(\Omega)$ satisfies $\mathbb{P} y \in L^2(0,T; V^0_n(\Omega)) \cap_{\varepsilon>0} L^2(0,T; V^{1/2- \varepsilon}(\Omega)) \cap H^{1/4}(0,T; V^0( \Omega) )\cap C^0([0,T]; V^{-1}(\Omega))$ and $(I- \mathbb{P}) y \in L^2(0,T; V^{1/2}(\Omega))$. Here, $V^0_n(\Omega)$ is the space defined in \eqref{Def-V-0-n}, and the other spaces are 
\begin{align*}
	& V^{s}(\Omega) = \{ y \in H^s(\Omega; \R^3), \, \text{ div \,} y = 0 \text{ in } \Omega, \text{ with } \langle y \cdot n , 1 \rangle_{H^{-1/2}(\partial \Omega), H^{1/2}(\partial \Omega)} = 0 \}, \qquad (s \geq 0),
	\\
	& V^1_0(\Omega) = \{ y \in H^1_0(\Omega; \R^3), \, \text{ div \,} y = 0 \text{ in } \Omega \} , 
\end{align*}
and $V^{-1}(\Omega)$ is the dual of $V^1_0(\Omega)$ with $V^0_n(\Omega)$ as pivot space. 

Theorem \ref{Thm-Main-Gal} then yields the following result: 
\begin{theorem}
	\label{Thm-Stokes-3d-Switch}
	Let $\Omega$ be a smooth bounded domain of $\R^3$, $\Gamma$ a non-empty open subset of $\partial \Omega$. Given a family of orthonormal triplets $(e_1(x), e_2(x), {\bf n}_x)$ for $x \in \Gamma$ which defines the control operators $B_1$, $B_2$ and $B_3$ according to \eqref{Def-B1-B2-B3} through the isomorphism $\pi $ in \eqref{isomorphisms-Stokes}, the control system \eqref{Eq-Stokes-3-d-Bound} is null-controllable in arbitrary small times with controls $(u_1, u_2, u_3) \in L^2(0,T; L^2(\Gamma; \R)^2 \times  L^2_0(\Gamma; \R))$ satisfying the switching condition \eqref{Def-Switching-3} in the following sense: for any $T>0$, for any $y_0 \in V^0_n(\Omega)$, {there exist control functions $u_1$, $u_2$ in  $L^2(0,T; L^2(\Gamma; \R))$, and  $u_3 \in L^2(0,T; L^2_0(\Gamma; \R))$} satisfying the switching condition \eqref{Def-Switching-3} such that the solution $ y$ of \eqref{Eq-Stokes-Abstract} satisfies $\mathbb{P} y(T) = 0$. 
\end{theorem}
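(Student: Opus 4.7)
The plan is to apply Theorem~\ref{Thm-Main-Gal} directly with $n=3$, after casting \eqref{Eq-Stokes-3-d-Bound} in the abstract form \eqref{Eq-Stokes-Abstract}. I would first take $H = V^0_n(\Omega)$ and $A = -\mathbb{P}\Delta$ as in \eqref{Stokes-Op}; this is a self-adjoint positive definite operator with compact resolvent on $H$ (see \cite[Lemma~IV.5.4]{Boyer-Fabrie-Book}), so the first alternative in the hypotheses of Theorem~\ref{Thm-Main-Gal} is met and $A^* = A$.

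Next, I would realize the boundary control structure in the abstract framework. For each $x \in \Gamma$, $(e_1(x), e_2(x), \mathbf{n}_x)$ is an orthonormal basis of $\R^3$, and the map $\pi$ in \eqref{isomorphisms-Stokes} provides a linear isomorphism from $U_1 \times U_2 \times U_3 = L^2(\Gamma;\R)^2 \times L^2_0(\Gamma;\R)$ onto
\[
U := \left\{ u \in L^2(\Gamma;\R^3) : \int_\Gamma u \cdot \mathbf{n}_x \, d\sigma = 0 \right\},
\]
since $\int_\Gamma \pi(u_1,u_2,u_3)\cdot \mathbf{n}_x\,d\sigma = \int_\Gamma u_3\,d\sigma = 0$, with inverse obtained by decomposing any $u \in U$ pointwise in the local frame $(e_1,e_2,\mathbf{n})$. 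The zero-mean restriction on $u_3$ is precisely what encodes the compatibility condition \eqref{Incompress} with the divergence-free constraint. I would then set $Bu = \tilde A\, \mathbb{P}\, D_\Gamma u$ and define $B_i \in \mathscr{L}(U_i, \mathscr{D}(A^*)')$ for $i \in \{1,2,3\}$ through the relation \eqref{Def-B1-B2-B3}; the regularity of $D_\Gamma$ recalled above (see \cite{Raymond-2007}) ensures that this is exactly the kind of unbounded control operator permitted by Theorem~\ref{Thm-Main-Gal}.

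The remaining hypothesis of Theorem~\ref{Thm-Main-Gal}, namely null-controllability of the abstract system \eqref{Main-Eq-u} in arbitrary small times in this setup, follows from Imanuvilov's interior-controllability result \cite{Imanuvilov2001} combined with the classical extension/restriction argument transferring interior controllability to boundary controllability from $\Gamma$; this is exactly the content of the theorem stated just before \eqref{Incompress}. With all hypotheses verified, Theorem~\ref{Thm-Main-Gal} produces control functions $(u_1,u_2,u_3) \in L^2(0,T; U_1 \times U_2 \times U_3)$ satisfying the switching condition \eqref{Def-Switching-3} and such that the solution of \eqref{Eq-Stokes-Abstract} satisfies $\mathbb{P} y(T) = 0$.

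The main obstacle, in my view, is the careful identification of the boundary control space with $U_1 \times U_2 \times U_3$ through the local orthonormal frame in a way compatible with the incompressibility constraint: this is precisely what forces the normal component $u_3$ to live in $L^2_0(\Gamma;\R)$ rather than $L^2(\Gamma;\R)$, and what makes the statement of the theorem slightly more intricate than in the distributed case of Section~\ref{Subsec-distrib-3-d-Stokes}. Once this algebraic setup is in place and the unboundedness of $B$ is absorbed into the $\mathscr{D}(A^*)'$ framework already built into Theorem~\ref{Thm-Main-Gal}, the rest is a direct application of that theorem.
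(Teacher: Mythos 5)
Your proposal is correct and follows essentially the same route as the paper: the paper also obtains Theorem \ref{Thm-Stokes-3d-Switch} as a direct application of Theorem \ref{Thm-Main-Gal}, after setting $H = V^0_n(\Omega)$, $A = -\mathbb{P}\Delta$, $Bu = \tilde A\,\mathbb{P}\,D_\Gamma u$, identifying the admissible boundary control space with $U_1\times U_2\times U_3 = L^2(\Gamma;\R)^2\times L^2_0(\Gamma;\R)$ via the local frame isomorphism \eqref{isomorphisms-Stokes}, and invoking Imanuvilov's result together with the extension/restriction argument for the small-time null-controllability hypothesis. Your identification of the incompressibility constraint \eqref{Incompress} as the reason $u_3$ must lie in $L^2_0(\Gamma;\R)$ matches the paper's discussion exactly.
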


\begin{remark}
	Although Theorem \ref{Thm-Stokes-3d-Switch} states only the control of $\mathbb{P} y$ at time $T$, extending the controls $(u_1, u_2, u_3)$ by $0$ for $t \geq T$, one easily checks that $\mathbb{P}y$ and $(I - \mathbb{P} ) y$ vanishes for $t \geq T$. The difficulty is that $(I - \mathbb{P}) y$ does not a priori make sense at time $T$ since it only belongs to $L^2(0,T; V^{1/2}(\Omega))$.
\end{remark}

It is to be noted that there are, up to our knowledge, almost no result regarding the controllability of Stokes system with controls acting only on normal or tangential components only. We are only aware {of \cite{Fursikov-1995} when considering tangential controls on the whole boundary} and of  the results by \cite{ChowdhuryMitraRenardy-2017} for the Stokes equation in a channel when the control is localized on the whole boundary of one side of the channel.

\section{Extensions}
\label{Sec-Extensions}

Theorem \ref{Thm-Main-Gal} focuses on the case of operators $A$ which are either positive self-adjoint with compact resolvent or are matrices. It is thus natural to also consider the case of general operators $A$ which generates an analytic semi-group and are possibly non self-adjoint. The goal of this section is precisely to discuss this case. As we will see, our arguments will require the introduction of several spectral assumptions which are hard to check in practice. 

\begin{theorem}
	\label{Thm-Ext}
	Let $A$ be an operator on the Hilbert space $H$ having compact resolvent and such that $-A$ generates an analytic semigroup. 
	
	Assume that the Hilbert space $H$ can be decomposed as 
	\begin{equation}
		\label{Total-Space}
		H  = \oplus_{k \in \N} H_k, \quad \text{ where } H_k \text{ are finite-dimensional vector spaces} 
	\end{equation}
	such that for all $k \in \N$, 
	\begin{multline}
		\label{A-k-Diag-Nilp}
		A^*(H_k) \subset H_k, \text{ and } 
		A^*|_{H_k} = A_k^*, 
		\\
		\text{ where $A_k^*$ is of the form $ \lambda_k I + N_k$, with $\lambda_k \in \C$ and $N_k$ nilpotent}.
	\end{multline} 
	Also assume for simplicity that $\Re(\lambda_0) \leq \Re(\lambda_1) \leq \cdots \leq \Re(\lambda_k) \leq \cdots \to \infty$. 
	
	Furthermore, denoting $\mathbb{P}_k$ the projection on $H_k$ parallel to $\oplus_{ j \neq k} H_j$, we assume that there exists $t_0 >0$ large enough so that 
	\begin{equation}
		\label{Hyp-Cvgce-Projections-Spectrales}
		\forall t \geq t_0, \quad e^{- t A^*} = \sum_k e^{- t A_k^*} \mathbb{P}_k, 
	\end{equation}
	\emph{i.e.} the right hand side is norm convergent for $t >  t_0$.
	
	Let $B \in \mathscr{L}(U,\mathscr{D}(A^*)')$, where $U$ is an Hilbert space, let $n \in \N$ with $n \geq 2$, and assume that $U$ is isomorphic to $U_1 \times \cdots \times U_n$ for some Hilbert spaces $U_i$, $i \in \{1, \cdots, n\}$, and define $B_i$ for $i \in \{1, \cdots, n\}$ as in \eqref{Def-B1-B2-B3}.

	We assume that system \eqref{Main-Eq-u} is null-controllable in arbitrary small times.

	Then the system \eqref{Main-Eq-u-1-2-3}	is null-controllable in arbitrary small times with switching controls, \emph{i.e.} satisfying \eqref{Def-Switching-3}. To be more precise, given any $T>0$ and any $y_0 \in H$, there exist $n$ control functions $u_i \in L^2(0,T; U_i)$, $i \in \{1, \cdots, n\}$ such that the solution $y$ of \eqref{Main-Eq-u-1-2-3} satisfies \eqref{Null-Cont-Req} while the control functions satisfy the switching condition \eqref{Def-Switching-3}.
\end{theorem}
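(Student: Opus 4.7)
The plan is to transport the argument of Theorems \ref{Thm-Main} and \ref{Thm-Main-Gal} into the present spectral setting, using \eqref{Total-Space}--\eqref{Hyp-Cvgce-Projections-Spectrales} as a substitute for self-adjointness or finite dimensionality. Given $y_0 \in H$ and analytic weights $\alpha_i$ of the form \eqref{Example-Alpha-3-controls}, I form the functional $J$ analogous to \eqref{Def-J-H-3-Controls}, extend it to the completion $X$ of $\mathscr{D}(A^*)$ under the observability norm, obtain a minimizer $Z_T \in X$, and read off switching controls from the Euler--Lagrange equation exactly as at the end of Section \ref{Sec-Thm-Gal}. The whole matter thus reduces to showing the dichotomy: for every pair $i \neq j$, either the set $I_{i,j} = \{t \in (0,T),\ \alpha_i(t)\|B_i^* Z(t)\|_{U_i}^2 = \alpha_j(t)\|B_j^* Z(t)\|_{U_j}^2\}$ has measure zero, or $B_i^* Z \equiv 0 \equiv B_j^* Z$ on $(0,T)$.

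To establish this, I fix a sequence $T_n \nearrow T$. Observability on $(T_n, T)$ gives $Z \in L^\infty(0, T_n; H)$, and, since $-A^*$ generates an analytic semigroup, $Z$ extends analytically to $(-\infty, T_n)$. If $I_{i,j} \cap (0, T_n)$ has positive measure, analyticity yields $\alpha_i(t)\|B_i^* Z(t)\|_{U_i}^2 = \alpha_j(t)\|B_j^* Z(t)\|_{U_j}^2$ throughout $(-\infty, T_n)$. Assumption \eqref{Hyp-Cvgce-Projections-Spectrales} combined with \eqref{A-k-Diag-Nilp} then gives, for every $t \leq T_n - t_0$,
$$
Z(t) = \sum_{k \in \N} e^{\lambda_k(t - T_n)} \sum_{\ell = 0}^{m_k} (T_n - t)^\ell w_{k,\ell}, \qquad w_{k,\ell} \in H_k,
$$
where $m_k$ is the nilpotency index of $N_k$. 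This is the infinite-dimensional analogue of \eqref{Expansion-Z-t}.

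From here I would run the scheme of Lemma \ref{Lemma-H-Finite-Dim-3}. Define $k_0$ as the smallest index for which some $w_{k_0,\ell}$ satisfies $\|B_i^* w_{k_0,\ell}\|_{U_i} + \|B_j^* w_{k_0,\ell}\|_{U_j} \neq 0$, then the maximal order $\ell_1$, and the set $D$ of indices with $\Re(\lambda_k) = \Re(\lambda_{k_0})$ realizing this dominant order. The set $D$ is finite because the non-decreasing sequence $\Re(\lambda_k) \to \infty$ attains each value only finitely often and each $H_k$ is finite dimensional. Writing $Z = e^{\Re(\lambda_{k_0})(t - T_n)}(T_n - t)^{\ell_1} Z_d + Z_r$ with $Z_d(t) = \sum_{k \in D} w_{k,\ell_1} e^{i \Im(\lambda_k)(t - T_n)}$, the remainder satisfies an estimate analogous to \eqref{Estimate-B-Z-r} thanks to the gap $\Re(\lambda_{k_0 + 1}) > \Re(\lambda_{k_0})$ (or a smaller polynomial order when $\ell_1 \geq 1$), so that
$$
\bigl| \alpha_i(t) \|B_i^* Z_d(t)\|_{U_i}^2 - \alpha_j(t) \|B_j^* Z_d(t)\|_{U_j}^2 \bigr| \longrightarrow 0 \quad \text{as } t \to -\infty.
$$
The left-hand side is a finite combination $\sum_m a_m e^{i\mu_m t}$ whose frequencies lie in a finite set involving $0$, $\pm \omega_i$, $\pm \omega_j$, and the differences $\Im(\lambda_k) - \Im(\lambda_{k_1})$ for $k, k_1 \in D$. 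Choosing the $\omega_i$ outside the countable (hence measure-zero) analogue of $W$ in \eqref{Def-W}, built from all differences $\Im(\lambda_k) - \Im(\lambda_{k_1})$ with $\Re(\lambda_k) = \Re(\lambda_{k_1})$ and their halves, Lemma \ref{Lem-a-j=0} applied to the coefficients of the constant term and of $e^{i\omega_j t}$ forces $B_i^* w_{k,\ell_1} = B_j^* w_{k,\ell_1} = 0$ for every $k \in D$, contradicting the definition of $D$. Hence $B_i^* Z$ and $B_j^* Z$ vanish identically on $(-\infty, T_n)$, and passing to the limit $n \to \infty$ gives the dichotomy on $(0, T)$.

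The chief obstacle is the legitimacy of the termwise spectral expansion: \eqref{Hyp-Cvgce-Projections-Spectrales} only grants norm convergence for $T_n - t > t_0$, so the asymptotic analysis as $t \to -\infty$ must be performed on $(-\infty, T_n - t_0)$ rather than on all of $(-\infty, T_n)$; this is harmless since both sides of the identity are analytic on $(-\infty, T_n)$ and hence the conclusion on $(-\infty, T_n - t_0)$ propagates. A secondary subtlety is that the exceptional frequency set becomes countably infinite rather than finite, but still has Lebesgue measure zero, so admissible frequencies $(\omega_1, \ldots, \omega_n)$ can always be selected.
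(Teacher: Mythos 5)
Your overall strategy is the paper's: reduce to the dichotomy for the sets $I_{i,j}$, extend $Z$ analytically to $(-\infty,T_n)$, expand it spectrally for $t \leq T_n - t_0$ using \eqref{Hyp-Cvgce-Projections-Spectrales}, isolate the dominant block indexed by $D$, and conclude with Lemma \ref{Lem-a-j=0} after removing the $0$ and $\omega$ resonances. The two subtleties you flag at the end (working only on $(-\infty, T_n - t_0)$, and the exceptional frequency set becoming countable) are real and are handled in the paper exactly as you propose.

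There is, however, one genuine gap, and it sits precisely at the point where this theorem differs from Lemma \ref{Lemma-H-Finite-Dim}: your claim that the remainder satisfies ``an estimate analogous to \eqref{Estimate-B-Z-r} thanks to the gap $\Re(\lambda_{k_0+1}) > \Re(\lambda_{k_0})$.'' In finite dimensions that estimate is a termwise bound on a finite sum. Here the high-frequency part of the remainder,
\[
Z_r(t) = \sum_{k \,:\, \Re(\lambda_k) > \Re(\lambda_{k_0})} e^{A_k^*(t-T_n)}\mathbb{P}_k Z_n,
\]
is an infinite series, and termwise bounds do not sum: the projections $\mathbb{P}_k$ need not be uniformly bounded (the paper's two examples exhibit $\|\mathbb{P}_k\|$ growing like $\exp(c\,\Re(\lambda_k))$), and hypothesis \eqref{Hyp-Cvgce-Projections-Spectrales} only gives norm convergence of the series at each fixed $t$, not the decay rate $e^{\mu t}$ with $\mu > \Re(\lambda_{k_0})$ as $t \to -\infty$ that your argument needs. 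The paper closes this by a different mechanism: $Z_r$ solves the adjoint equation for the restriction $A_r^* = A^*|_{\oplus_{\Re(\lambda_k)>\Re(\lambda_{k_0})}H_k}$, which still generates an analytic semigroup whose spectral abscissa is $\inf\{\Re(\lambda_k): \Re(\lambda_k) > \Re(\lambda_{k_0})\} > \Re(\lambda_{k_0})$, and for analytic semigroups the growth bound equals the spectral bound (\cite[Theorem 4.3]{Pazy}); this yields \eqref{Decay-Z-r}, i.e. $\|Z_r(t)\|_H \leq C e^{\mu t}$ for some $\mu > \Re(\lambda_{k_0})$. Without this (or an equivalent) argument, the asymptotic isolation of $Z_d$ as $t \to -\infty$, on which the whole contradiction rests, is not justified.
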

Before going further and giving the proof of {Theorem \ref{Thm-Ext}}, let us emphasize that the assumptions on $A^*$ may be delicate to prove for general operators $A$ generating an analytic semigroup.

Of course, each $H_k$ corresponds to the generalized eigenspaces corresponding to the eigenvalues $\lambda_k$, and the projections $\mathbb{P}_k$ corresponds to the spectral projections. However, condition \eqref{Hyp-Cvgce-Projections-Spectrales} is difficult to check in practice, see e.g. \cite{Gohberg-Krein-1969} for an introduction to spectral theory for non self-adjoint operators.

To better illustrate that fact, we present two examples of interest. 

The first one is borrowed from \cite{Benabdallah-Boyer-Morancey-2020} and we deeply thank Franck Boyer for having pointed it to us. 

Let us take $A_0$ a positive self-adjoint operator with compact resolvent defined on a Hilbert space $H_0$ with domain $\mathscr{D}(A_0)$, which we will assume for simplicity to have only simple eigenvalues. Then, for $f \in \mathscr{C}^\infty(\R_+^*; \R_+^*)$ bounded at infinity, define 
\begin{equation}
\label{Ex-Boyer}
	\widehat{A} = \left( \begin{array}{cc} A_0 & Id \\ 0 & A_0 + f(A_0) \end{array} \right), \hbox{ in } H = (H_0)^2, \quad \hbox{ with } \mathscr{D}(\widehat{A}) = (\mathscr{D}(A_0))^2.
\end{equation}
It is easy to check that such $\widehat{A}$ generates an analytic semigroup in $H$, since it is a bounded perturbation of the operator $\text{Diag\,} (A_0, A_0)$. Besides, its spectrum can be expressed easily in terms of those of $A_0$. If $(\lambda_{k,0})_{k \in \N}$ is the set of eigenvalues of $A_0$, corresponding to a family of normalized eigenvectors $(\varphi_{k,0})_{k \in\N}$, then  it is easy to check that the eigenvalues of $\widehat{A}$ are given by the family $(\lambda_{k,1}, \lambda_{k,2})_{k \in \N}$ with $\lambda_{k,1} = \lambda_{k,0}$ and $\lambda_{k,2} = \lambda_{k, 0} + f(\lambda_{k,0})$. The corresponding eigenvectors are given for $k \in\N$ by 
\[
	\varphi_{k,1} =  \left( \begin{array}{c}1 \\ 0 \end{array} \right)  \varphi_{k,0} , 
	\qquad
	\varphi_{k,2} =  \frac{1}{\sqrt{1+ f(\lambda_{k,0})^2} }\left( \begin{array}{c} 1 \\ f(\lambda_{k,0})  \end{array} \right)  \varphi_{k,0} . 
\]
It is then easy to check that 
\begin{align*}
	&\mathbb{P}_{k,1} \left( \begin{array}{c}z_1 \\ z_2 \end{array} \right) = \varphi_{k,1} 
	\left\langle
		\left( \begin{array}{c} 1\\  - \frac{1}{f(\lambda_k)} \end{array}  \right) \varphi_{k,0},\left( \begin{array}{c}z_1 \\ z_2 \end{array} \right)
	\right\rangle_{H}, 
	\\ 
	& \mathbb{P}_{k,2} \left( \begin{array}{c}z_1 \\ z_2 \end{array} \right) = \varphi_{k,2} 
	\left\langle
		\left( \begin{array}{c} 0\\ \frac{\sqrt{1+f(\lambda_k)^2}}{f(\lambda_k)}\end{array}  \right) \varphi_{k,0},\left( \begin{array}{c}z_1 \\ z_2 \end{array} \right)
	\right\rangle_{H}
	.
\end{align*}
When $f$ goes to zero at infinity, the norms of these projections behave like $1/f(\lambda_k)$. In particular, if for $T_0 >0$, there exists $C$ such that $f(s) \leq C e^{ - T_0s}$ for $s$ large enough, we see that the right hand side of \eqref{Hyp-Cvgce-Projections-Spectrales} is not norm convergent for $t \in (0,T_0)$. Of course, this also means that when considering $f(s) = \exp(-s^2)$, condition \eqref{Hyp-Cvgce-Projections-Spectrales} is not satisfied whatever $t_0 >0$ is.

This example shows that even for rather gentle perturbations of self-adjoint operators, condition \eqref{Hyp-Cvgce-Projections-Spectrales} should be analyzed with caution. 

We also present another example in this direction, based on the works \cite{Davies-2000, Davies-et-al-04}  discussing the operator $A_\alpha$ defined for complex number $\alpha \in \C \setminus \{0\}$ with $\text{Arg\,} (\alpha) < \pi /4$ on $L^2(\R)$ by 
\[
	A_\alpha y = -  \alpha^{-2} y'' + \alpha^2 x^2 y.
\]
In fact, to be perfectly rigorous, the operator $ A_\alpha$ has to be defined as the closed densely defined operator associated to the quadratic form 
\[
	\int_\R \left(\alpha^{-2} |y' (x)|^2 + \alpha^2 x^2 y(x)^2 \right) dx, 
\]
originally defined on $\mathscr{C}^\infty_c(\R)$. 

According to \cite{Davies-2000}, the eigenvalues of the operator $A_\alpha$ does not depend on $\alpha$ for $\alpha \in \C \setminus \{0\}$ with $|\text{Arg\,} (\alpha) | < \pi /4$ and thus coincides with the usual ones for the harmonic operator (which are $2 \N + 1$), but except if $\alpha \in \R_+^*$, the spectrum of $A_\alpha$ is wild (\cite[Theorem 9]{Davies-2000}), meaning that, denoting by $\mathbb{P}_k$ the spectral projector on the $k$-th eigenvector,  $\| \mathbb{P}_k\|$ cannot be bounded by a polynomial in $k$. 

In fact, the situation is even worse and, for $\alpha \notin \R$, the formula 
\[
	e^{ - t \alpha^2 A_\alpha} = \sum_{k \in \N} e^{- t \alpha^2 \lambda_k} \mathbb{P}_k, 
\]
holds only for $t$ large enough, see \cite[Corollary 4]{Davies-et-al-04}, due to the fact that $\norm{\mathbb{P}_k}$ behaves like $\exp(c \Re( \lambda_k))$ for some strictly positive $c$ as $k \to \infty$.

To sum up, we see that condition \eqref{Hyp-Cvgce-Projections-Spectrales} is rather delicate to deal with. Although it is automatically satisfied in finite dimensional contexts or when $A$ is self-adjoint, when considering general operators $A$ generating an analytic semigroup, condition \eqref{Hyp-Cvgce-Projections-Spectrales} should be carefully analyzed.

\begin{proof}
	The proof of Theorem \ref{Thm-Ext} strongly follows the ones of Theorem \ref{Thm-Main} and \ref{Thm-Main-Gal}. 

	For sake of simplicity, we will only focus on the case $n = 2$ and $B \in \mathscr{L}(U,H)$, similarly as in Theorem \ref{Thm-Main}, since the general case $n \geq 3$ and {$B \in \mathscr{L}(U, \mathscr{D}(A^*)')$} can be handled similarly as in Section \ref{Sec-Thm-Gal} by minor adaptations of the case $n = 2$. 

	In fact, it is easy to check that the only point which needs further analysis is the counterpart of Lemma \ref{Lemma-A-SelfAdj} and Lemma \ref{Lemma-H-Finite-Dim}. 	
	
	We thus take $X$ as in \eqref{Def-X-Space}, and let $Z_T \in X$ be a minimizer of the functional $J$ in \eqref{Def-J-H}, and we study the set $I$ defined in \eqref{Def-I-Switching-Times}.
	\begin{lemma}
		\label{Lem-Ext}
		Assume that $A$ is an operator on the Hilbert space $H$ having compact resolvent and such that $-A$ generates an analytic semigroup. Also assume that the Hilbert space $H$ can be decomposed as in \eqref{Total-Space} such that $A^*$ satisfies \eqref{A-k-Diag-Nilp} for all $k \in \N$, where the corresponding eigenvalues $(\lambda_k)_{k \in \N}$ are ordered  
such that $\Re(\lambda_0) \leq \Re(\lambda_1) \leq \cdots \leq \Re(\lambda_k) \leq \cdots \to \infty$. Furthermore assume that, denoting $\mathbb{P}_k$ the projection on $H_k$ parallel to $\oplus_{ j \neq k} H_j$, there exists $t_0 >0$ large enough such that \eqref{Hyp-Cvgce-Projections-Spectrales} holds.

		Define the set $W$ as in \eqref{Def-W}.

		Let $B \in \mathscr{L}(U, H)$ and assume that system \eqref{Main-Eq-u} is null-controllable in arbitrary small times.

		Then, for $\alpha$ as in \eqref{Example-Alpha} with $\omega \in \R \setminus W$, the set $I$ is necessarily of zero measure, except in the trivial case $ \| B_1^* Z \|_{L^2(0,T;U_1)} = \| B_2^* Z \|_{L^2(0,T; U_2)}  =0$.
	\end{lemma}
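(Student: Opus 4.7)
The plan is to combine the approach of Lemma \ref{Lemma-A-SelfAdj} (to deal with the infinite-dimensional setting) with that of Lemma \ref{Lemma-H-Finite-Dim} (to handle complex eigenvalues and Jordan blocks). First I would follow the scheme of Lemma \ref{Lemma-A-SelfAdj}: choose a strictly increasing sequence $T_n \to T$, set $I_n = I \cap (0, T_n)$, and use the observability property \eqref{Obs-T=0} applied on $(T_n, T)$, together with the continuous extension of $Z$ to $X$, to show that for any minimizer $Z_T \in X$ the function $Z$ is a continuous $H$-valued function on $[0, T_n]$. Since $-A^*$ generates an analytic semigroup, $Z$ extends uniquely as an analytic function on $(-\infty, T_n]$ solving $-Z' + A^* Z = 0$. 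Analyticity of $Z$ and of $\alpha$ then implies that, as soon as $I_n$ has positive measure, one has $\| B_1^* Z(t) \|_{U_1}^2 = \alpha(t) \| B_2^* Z(t) \|_{U_2}^2$ for \emph{every} $t \in (-\infty, T_n)$. The goal is to show that this identity forces $B_1^* Z$ and $B_2^* Z$ to vanish identically on $(-\infty, T_n)$.

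Next I would use the spectral decomposition. Writing $Z_n := Z(T_n) \in H$, hypothesis \eqref{Hyp-Cvgce-Projections-Spectrales} gives, for $t \leq T_n - t_0$,
\[
Z(t) = \sum_k e^{\lambda_k (t - T_n)}\, e^{(t - T_n) N_k}\, \mathbb{P}_k Z_n,
\]
with norm convergence, each $e^{(t - T_n) N_k}$ being a polynomial in $(T_n - t)$ of degree at most $\dim H_k - 1$. Mimicking Lemma \ref{Lemma-H-Finite-Dim}, I would introduce
\[
k_0 = \inf\left\{k \in \N :\, \exists \ell \in \{0,\dots,\dim H_k - 1\},\ \| B_1^* N_k^\ell \mathbb{P}_k Z_n \|_{U_1} + \| B_2^* N_k^\ell \mathbb{P}_k Z_n\|_{U_2} \neq 0 \right\},
\]
and argue by contradiction, assuming $k_0 < \infty$. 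Since $\Re(\lambda_k) \to \infty$ and the $\lambda_k$ are ordered by non-decreasing real part, the set $D = \{k \in \N :\, \Re(\lambda_k) = \Re(\lambda_{k_0})\}$ is \emph{finite}. Let $\ell_1$ be the largest $\ell$ such that $B_i^* N_k^\ell \mathbb{P}_k Z_n \neq 0$ for some $k \in D$ and $i \in \{1,2\}$, and define $Z_d$ and $Z_r$ exactly as in \eqref{Def-Z-d-Z-r} with vectors $w_{k,\ell_1}$ proportional to $N_k^{\ell_1}\mathbb{P}_k Z_n$.

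The key technical step is the bound on $Z_r$. The contribution to $Z_r$ coming from the finitely many indices with $\Re(\lambda_k) = \Re(\lambda_{k_0})$ and $\ell < \ell_1$ is polynomial in $(T_n - t)$ of degree at most $\ell_1 - 1$, times $e^{\Re(\lambda_{k_0})(t - T_n)}$. For the remaining infinite tail, indexed by $k$ with $\Re(\lambda_k) > \Re(\lambda_{k_0})$, I would set $\mu_+ = \min\{\Re(\lambda_k) :\, \Re(\lambda_k) > \Re(\lambda_{k_0})\}$, which exists since the real parts are ordered and tend to infinity, and exploit the norm convergence at time $t_0$ to write, for $s = t - T_n \leq -t_0$,
\[
\Big\| \sum_{\Re(\lambda_k) > \Re(\lambda_{k_0})} e^{\lambda_k s}\, e^{s N_k}\, \mathbb{P}_k Z_n \Big\|_H \leq e^{(\mu_+ - \Re(\lambda_{k_0}))(s+t_0)}\, e^{\Re(\lambda_{k_0}) s}\, C(Z_n),
\]
with $C(Z_n)$ controlling the spectral sum at time $t_0$ provided by \eqref{Hyp-Cvgce-Projections-Spectrales}. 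Combined with the polynomial estimate, this yields the analogue of \eqref{Estimate-B-Z-r}, and hence the bound \eqref{Est-Z-d} on $\| B_1^* Z_d \|_{U_1}^2 - \alpha(t) \| B_2^* Z_d \|_{U_2}^2$ as $t \to -\infty$.

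The final step copies the end of the proof of Lemma \ref{Lemma-H-Finite-Dim}: $Z_d$ is a finite trigonometric sum with frequencies $\{\Im(\lambda_k) : k \in D\}$, so $\| B_1^* Z_d(t) \|_{U_1}^2 - \alpha(t) \| B_2^* Z_d(t) \|_{U_2}^2$ expands as $\sum_j a_j e^{i \mu_j t}$ with $\{\mu_j\}$ contained in $\{0, \pm \omega\} \cup \bigl(\{\Im(\lambda_k) - \Im(\lambda_{k_1}) : k, k_1 \in D\} + \{0, \pm \omega\}\bigr)$. The choice $\omega \notin W$ guarantees that the frequencies $0$ and $\omega$ each appear exactly once in this list, so Lemma \ref{Lem-a-j=0} yields $\sum_{k \in D} \| B_1^* w_{k,\ell_1} \|_{U_1}^2 = \sum_{k \in D} \| B_2^* w_{k,\ell_1} \|_{U_2}^2 = 0$, contradicting the definition of $k_0$ and $\ell_1$. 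Therefore $k_0 = \infty$, $B_1^* Z \equiv B_2^* Z \equiv 0$ on $(-\infty, T_n)$, and passing to the limit $n \to \infty$ finishes the proof. The main obstacle is precisely the infinite-tail estimate of the third paragraph, which has no counterpart in Lemma \ref{Lemma-H-Finite-Dim} and relies crucially on \eqref{Hyp-Cvgce-Projections-Spectrales} to neutralize the potential unboundedness of the spectral projections $\mathbb{P}_k$ illustrated in the examples preceding the statement.
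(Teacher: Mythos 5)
Your overall architecture coincides with the paper's: truncate to $(0,T_n)$, use small-time observability to place $Z(T_n)$ in $H$, extend analytically to $(-\infty,T_n)$, expand via \eqref{Hyp-Cvgce-Projections-Spectrales}, introduce $k_0$, $\ell_1$, $D$, and close with Lemma \ref{Lem-a-j=0} and the condition $\omega\notin W$. The endgame is fine. The problem is exactly where you locate it: the tail estimate in your third paragraph, and as written it does not hold up.

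Your claimed bound
$\bigl\| \sum_{\Re(\lambda_k) > \Re(\lambda_{k_0})} e^{\lambda_k s} e^{s N_k} \mathbb{P}_k Z_n \bigr\|_H \leq e^{(\mu_+ - \Re(\lambda_{k_0}))(s+t_0)} e^{\Re(\lambda_{k_0}) s} C(Z_n)$
amounts to $C e^{\mu_+ s}$, and the way you propose to get it --- factoring $e^{sA_k^*}=e^{(s+t_0)A_k^*}e^{-t_0A_k^*}$ and ``pulling out'' the slowest exponential --- needs two things that the hypotheses do not give you. First, operator-norm convergence of $\sum_k e^{-t_0 A_k^*}\mathbb{P}_k$ does not yield absolute summability of $\|e^{-t_0A_k^*}\mathbb{P}_k Z_n\|_H$, and since the $H_k$ are not orthogonal (this is precisely the non-self-adjoint difficulty the surrounding examples illustrate), you cannot estimate the norm of the sum by summing term-by-term bounds. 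Second, even termwise, $\|e^{(s+t_0)A_k^*}\|_{\mathscr{L}(H_k)}$ is $e^{\Re(\lambda_k)(s+t_0)}$ times a polynomial in $|s+t_0|$ of degree $\dim H_k-1$ coming from the nilpotent part, and these degrees and the norms $\|N_k\|$ need not be uniformly controlled; in particular the rate $e^{\mu_+ s}$ is already too strong for the block realizing $\mu_+$ if it carries a nontrivial Jordan structure. The paper sidesteps all of this: it observes that $Z_r$ solves the adjoint equation on the invariant subspace $\oplus_{\Re(\lambda_k)>\Re(\lambda_{k_0})}H_k$, that the restriction of $A^*$ there still generates an analytic semigroup whose spectral abscissa is $\mu_+=\inf\{\Re(\lambda_k):\Re(\lambda_k)>\Re(\lambda_{k_0})\}$, and invokes \cite[Theorem 4.3]{Pazy} to get $\|Z_r(t)\|_H\leq Ce^{\mu t}$ for \emph{some} $\mu\in(\Re(\lambda_{k_0}),\mu_+)$ --- which is all that is needed, since only $\mu>\Re(\lambda_{k_0})$ enters the comparison with $Z_d$. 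Replacing your third paragraph by this semigroup-restriction argument (and weakening the claimed rate to any $\mu<\mu_+$) repairs the proof; the rest of your write-up then matches the paper's.
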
	

	Once Lemma \ref{Lem-Ext} will be proved (see afterwards), the end of the proof of Theorem \ref{Thm-Ext} will follow line to line the one of Theorem \ref{Thm-Main}, by showing that the Euler Lagrange equation satisfied by $Z_T$ is given by \eqref{Euler-Lag-Eq} when $Z_T \neq 0$, entailing that the controls $u_1$ and $u_2$ given by \eqref{Def-Controls} are of switching forms and indeed control the equation \eqref{Main-Eq-u-1-2}. As before, the case $Z_T = 0$ corresponds to the case $y_0 = 0$, and then taking the controls $u_1$ and $u_2$ to be identically zero solves the problem.
\end{proof}
	
\begin{proof}[Proof of Lemma \ref{Lem-Ext}]
	In order to prove that the set $I$ is of zero measure except when $ \| B_1^* Z \|_{L^2(0,T;U_1)} = \| B_2^* Z \|_{L^2(0,T; U_2)}  =0$, we consider a strictly positive and strictly increasing sequence $T_n$ going to $T$ as $n \to \infty$ and we show that for all $n \in \N$, the set $I_n = I \cap (0, T_n)$ is of zero measure except in the trivial case in which both $B_1^* Z$ and $B_2^* Z$ vanish identically on $(0,T_n)$. 
	
	As in the proof of Lemma \ref{Lemma-A-SelfAdj}, the small time null-controllability implies that since $Z_T \in X$, the trajectory $Z|_{(0,T_n)}$ is well defined and in fact solves the equation \eqref{Eq-Z-n} with some initial datum $Z_n \in H$. 
	
	Accordingly, since $-A^*$ generates an analytic semigroup, the function $t \mapsto Z(t)$ is in fact analytic on $(0,T_n)$ with values in $H$, and can be extended analytically to $(-\infty, T_n)$. 
	
	We now assume that $I_n$ is not of zero measure. According to the analyticity properties above, this implies that the identity \eqref{Infinite-Time-Identity} holds. 
	
	To conclude as in the proof of Lemma \ref{Lemma-A-SelfAdj} or Lemma \ref{Lemma-H-Finite-Dim}, we would like to write formula \eqref{Exp-Z(t)}. This cannot be done for all $t< T_n$ as before, but according to \eqref{Hyp-Cvgce-Projections-Spectrales}, it is still true for $t < T_n -T_0$:
	\begin{equation}
	\label{Exp-Z(t)-Mod}
		\forall t < T_n - T_0, \quad Z(t) = \sum_{k \in \N} e^{A_k^* {(t - T_n})} \mathbb{P}_k Z_n.
	\end{equation} 

	Each $H_k$ is a finite-dimensional vector space. Therefore, writing the Jordan decomposition of $A^*|_{H_k}$, for each $k \in \N$, denoting by $m_k$ the size of the maximal Jordan block corresponding to $\lambda_k$,	%
	\[
		e^{A_k^* {(t - T_n)}} = e^{\lambda_k(t-T_n)} \sum_{\ell \in \{0, \cdots, m_k\} } \frac{(t-T_n)^\ell}{\ell !} N_k^\ell.
	\]

	We then follow the proof of Lemma \ref{Lemma-H-Finite-Dim}, introducing 
	$$
		k_0 = \inf \left\{ k \in \N : \exists \ell \in \{0, \cdots, m_k\} \hbox{ such that }    \| B_1^* N_k^\ell \mathbb{P}_{k} Z_n \|_{U_1}+ \| B_2^* N_k^\ell \mathbb{P}_{k} Z_n \|_{U_2} \neq 0 \right\}.
	$$
	Our goal is to show that $k_0$ is necessarily infinite. Indeed, if $k_0$ is infinite, then for all $k$, $B_1^* e^{A_k^*(t-T_n)} \mathbb{P}_k Z_n$ and $B_2^* e^{A_k^*(t-T_n)} \mathbb{P}_k Z_n$ identically vanish, so that using the formula \eqref{Exp-Z(t)-Mod}, we see that $B_1^* Z$ and $B_2^* Z$ identically vanish on $(-\infty, T_n - T_0)$, and by analyticity on $(0, T_n)$ as well.
	
	We prove that $k_0$ is necessarily infinite by contradiction, assuming that $k_0$ is finite. 
	
	Next, we define $\ell_1$ by
	\begin{align*}
		\ell_1 = \sup \left\{\ell: \exists k \hbox{ with } \Re(\lambda_{k}) = \Re(\lambda_{k_0}) \  \hbox{ and }  \| B_1^* N_k^\ell \mathbb{P}_{k} Z_n \|_{U_1}+ \| B_2^* N_k^\ell \mathbb{P}_{k} Z_n \|_{U_2} \neq 0 \right\}, 
	\end{align*}
	and the set 
	$$
		D = \left \{ k :  \Re(\lambda_{k}) = \Re(\lambda_{k_0}) \hbox{ and } \| B_1^* N_k^{\ell_1} \mathbb{P}_{k} Z_n \|_{U_1}+ \| B_2^* N_k^{\ell_1} \mathbb{P}_{k} Z_n  \|_{U_2} \neq 0 \right \}. 
	$$

	According to the above definition, we can decompose $Z$ as 
	\begin{align*}
		Z_d(t) & =  e^{\Re(\lambda_{k_0})t} \frac{(T - t)^{\ell_1} }{\ell_1 !}\sum_{k \in D} N_k^{\ell_1} \mathbb{P}_k Z_n e^{i \Im(\lambda_k) (t-T)},  \qquad  &(t \in (-\infty, T_n)),
		\\
		Z_{d, 2}(t) & =  \sum_{k \text{ with } \Re(\lambda_k) = \Re(\lambda_{k_0}) } e^{\lambda_k(t - T_n)} \left( \sum_{\ell \in \{0, \cdots, \ell_1 - 1 \} } \frac{(T-t)^\ell}{\ell !} N_k^\ell \mathbb{P}_k Z_n  \right) , \qquad &(t \in (-\infty, T_n)),
		\\
		Z_{d,3}(t) & =   \sum_{k \text{ with } \Re(\lambda_k) = \Re(\lambda_{k_0}) } e^{\lambda_k(t - T_n)} \left( \sum_{\ell \geq \ell_1 + 1 } \frac{(T-t)^\ell}{\ell !} N_k^\ell\mathbb{P}_k Z_n  \right) , \qquad &(t \in (-\infty, T_n)),
		\\
		Z_0(t) & = \sum_{k \text{ with } \Re(\lambda_k) < \Re(\lambda_{k_0} )}
			e^{A_k^* (t-T_n)} \mathbb{P}_k Z_n, \qquad &(t \in (-\infty, T_n)), 
		\\
		Z_r(t) & = \sum_{ k \text{ with } \Re( \lambda_k) > \Re(\lambda_{k_0})} e^{A_k^* (t-T_n)} \mathbb{P}_k Z_n, \qquad & (t \in (-\infty, T_n - T_0)). 
	\end{align*}
	By definition of $k_0$ and $\ell_1$, we easily see that
	\begin{equation}
		\label{Zeros-Obs}
		\forall t \in (- \infty, T_n),
		\quad \norm{B_1^* Z_{d,3}(t)}_{U_1} + \norm{B_1^* Z_0(t)}_{U_1} + \norm{B_2^* Z_{d,3}(t)}_{U_2} + \norm{B_2^* Z_0(t)}_{U_2} = 0.
	\end{equation}
	
	It is also easy to check, since the sum defining $Z_{d,2}$ is finite, that there exists a constant $C$ such that $Z_{d,2}$ satisfies
	\begin{equation}
		\label{Smaller-Obs-1}
		\forall t \leq {T_n-1}, \quad \norm{B_1^* Z_{d,2}(t)}_{U_1} + \norm{B_2^* Z_{d,2}(t)}_{U_2} \leq {e^{\Re(\lambda_{k_0})t}} C (T_n-t)^{\ell-1}.
	\end{equation}

	We claim that there exist constants $C$ and $\mu > \Re(\lambda_{k_0})$ such that 
	\begin{equation}
		\label{Decay-Z-r}
		{\forall t \leq T_n - T_0-1}, \quad\norm{Z_r(t) }_H \leq C e^{\mu t }.
	\end{equation}
	Indeed, denoting $A_r^* = A^*|_{\oplus_{k \text{ with } \Re(\lambda_k) > \Re(\lambda_{k_0})} H_k}$, $Z_r$ solves 
	\[
		- Z_r' + A_r^* Z_r = 0, \quad t \in (- \infty, T_n - T_0), \qquad
		 Z_r|_{t = T_n - T_0} = \sum_{ k \text{ with } \Re( \lambda_k) > \Re(\lambda_{k_0})} e^{- A_k^* T_0} \mathbb{P}_k Z_n. 
	\]
	Since $A^*$ generates an analytic semigroup on $H$, it is easy to check that $A_r^* = A^*|_{\oplus_{k \text{ with } \Re(\lambda_k) > \Re(\lambda_{k_0})} H_k}$ also generates an analytic semigroup on ${\oplus_{k \text{ with } \Re(\lambda_k) > \Re(\lambda_{k_0})} H_k}$ and that its spectral abcissa is given by  $\inf \{ \Re(\lambda_k), \text{ with } \Re(\lambda_k) > \Re(\lambda_{k_0})\}$. According to \cite[Theorem 4.3]{Pazy}, $Z_r$ thus decays exponentially at any rate smaller than \[ \inf \{ \Re(\lambda_k), \text{ with } \Re(\lambda_k) > \Re(\lambda_{k_0})\}.\] Since this quantity is strictly larger than $\Re(\lambda_{k_0})$, we have proved \eqref{Decay-Z-r}. 

	Estimate \eqref{Decay-Z-r} in turns imply that 
	\begin{equation}
		\label{Smaller-Obs-2}
		\forall t \leq T_n - T_0, \quad \norm{B_1^* Z_{r}(t)}_{U_1} + \norm{B_2^* Z_{r}(t)}_{U_2} \leq C e^{\mu t}, 
	\end{equation}
	for some $\mu > {\Re(\lambda_{k_0})}$. 

	Using then the identity \eqref{Infinite-Time-Identity}, and the decay estimates \eqref{Zeros-Obs}, \eqref{Smaller-Obs-1} and \eqref{Smaller-Obs-2}, we easily obtain the counterpart of \eqref{Est-Z-d}, that is the existence of positive constants $C_1, C_2$ such that for all {$t \leq T_n - T_0-1$}, 
	\begin{multline}
		\left|
		\norm{B_1^* \left(\sum_{k \in D} N_k^{\ell_1} \mathbb{P}_k Z_n e^{i \Im(\lambda_k) (t-T)}\right)}_{U_1}^2 
		- \alpha(t) 
		\norm{B_2^* \left(\sum_{k \in D} N_k^{\ell_1} \mathbb{P}_k Z_n e^{i \Im(\lambda_k) (t-T)}\right)}_{U_2}^2 
		\right|
		\\
		\leq \frac{C}{T_n - t}.
	\end{multline}
	As in the proof of Lemma \ref{Lemma-H-Finite-Dim}, we then easily get that, if $\alpha$ is as in \eqref{Example-Alpha} with $\omega \notin W$, for all $k \in D$, 
	\[
	\norm{B_1^*  N_k^{\ell_1} \mathbb{P}_k Z_n }_{U_1} + \norm{B_2^*  N_k^{\ell_1} \mathbb{P}_k Z_n }_{U_2} = 0.
	\]
	This contradicts the definition of $k_0$ { when $k_0<\infty$,} and concludes the proof of Lemma \ref{Lem-Ext}.
\end{proof}	
	%
%
%
\section{Further comments and open problems}\label{Sec-Further-Com-and-Op-Pbs}
\subsection{Further comments}\label{Subsec-Further}

\paragraph{Approximate controllability.} In this article, we focused on the null-controllability property, but several other notions can be used and developed similarly. For instance, we could consider the approximate controllability property at time $T$, which reads as follows for system \eqref{Main-Eq-u}: For any $y_0 \in H$ and $\varepsilon>0$, there exists $u \in L^2(0,T)$ such that the solution $y$ of \eqref{Main-Eq-u} satisfies $\| y(T) \|_{H} \leq \varepsilon$. 

It is classical, see for instance \cite{Lions-1992}, that this is equivalent to the following unique continuation property for the adjoint equation: if $z_T \in H$ is such that the solution $z$ of \eqref{Adj-Eq} satisfies $B^*z = 0$ in $L^2(0,T;U)$, then $z_T = 0$. 

In this context, following the same strategy as before, we can prove the following counterpart of Theorem \ref{Thm-Main}:
\begin{theorem}
	\label{Thm-Main-App}
	Let us assume one of the two conditions: 
	\begin{itemize}
		\item $A: \mathscr{D}(A) \subset H \to H$ is a self-adjoint positive definite operator with compact resolvent, $H$ being a Hilbert space; 
		\item $H$ is a finite dimensional vector space.
	\end{itemize}
	Let $B \in \mathscr{L}(U,H)$, where $U$ is an Hilbert space, and assume that $U$ is isomorphic to $U_1 \times U_2$ for some Hilbert spaces $U_1$ and $U_2$, and define $B_1$ and $B_2$ as in \eqref{Def-B1-B2}.

	Assume that system \eqref{Main-Eq-u} is approximately controllable at time $T$. 

	Then system \eqref{Main-Eq-u-1-2} is approximately controllable at time $T$ with switching controls, \emph{i.e.} satisfying \eqref{Def-Switching-Controls}. To be more precise, given any $\varepsilon>0$ and any $y_0 \in H$, there exist control functions $u_1 \in L^2(0,T; U_1)$ and $u_2 \in L^2(0,T; U_2)$ such that the solution $y$ of \eqref{Main-Eq-u-1-2} satisfies $\| y(T) \|_{H} \leq \varepsilon$ while the control functions satisfy the switching condition \eqref{Def-Switching-Controls}.
\end{theorem}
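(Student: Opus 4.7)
The plan is to adapt the minimization strategy of Section \ref{Sec-Proof-Main} by replacing the functional $J$ of \eqref{Def-J-H}, which is tailored to null controllability, by the classical penalized Hilbert Uniqueness Method functional associated to approximate controllability (see \cite{Lions-1992}). Namely, for $y_0 \in H$, $T>0$ and $\varepsilon>0$ fixed, I would consider, for $z_T \in H$,
\begin{equation*}
J_\varepsilon(z_T) = \frac{1}{2}\int_0^T \max\{\| B_1^* z(t)\|_{U_1}^2,\,\alpha(t)\| B_2^* z(t)\|_{U_2}^2\}\,dt + \varepsilon \| z_T\|_H + \langle y_0, z(0)\rangle_H,
\end{equation*}
where $z$ is the solution of \eqref{Adj-Eq} and $\alpha$ is as in \eqref{Example-Alpha}. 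Unlike in the null-controllability setting, I would work directly in $H$ (without introducing a completion space), since it is the linear term $\varepsilon\| z_T\|_H$ that supplies the needed coercivity.

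The first step is to show that the functional $J_\varepsilon$, which is clearly convex and continuous on $H$, admits a minimizer. I would verify coercivity by the standard normalization argument: if $\|z_T^n\|_H \to \infty$, setting $\hat z_T^n = z_T^n/\|z_T^n\|_H$ and distinguishing whether $\liminf_n \int_0^T\max\{\|B_1^*\hat z^n\|_{U_1}^2,\alpha\|B_2^*\hat z^n\|_{U_2}^2\}\,dt$ is positive or zero. In the first case, $J_\varepsilon(z_T^n) \to +\infty$ directly. In the second, a weakly convergent subsequence of $\hat z_T^n$ produces a weak limit $\hat z_T^\infty \in H$ with $B^* \hat z^\infty = 0$ in $L^2(0,T;U)$; the unique continuation property, equivalent to the approximate controllability of \eqref{Main-Eq-u}, then forces $\hat z_T^\infty = 0$, which implies that $\langle y_0, \hat z^n(0)\rangle_H \to 0$ and hence that $J_\varepsilon(z_T^n)/\|z_T^n\|_H \to \varepsilon > 0$. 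Coercivity and continuity yield a minimizer $Z_T \in H$.

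The next step, which is really the heart of the argument, is to prove that the set $I$ defined in \eqref{Def-I-Switching-Times} (relative to $Z$, the solution of \eqref{Adj-Eq} with terminal datum $Z_T$) has zero Lebesgue measure unless $Z_T = 0$. As emphasized in Remark \ref{Rem-Lemma-and-UC}, Lemma \ref{Lemma-A-SelfAdj} and Lemma \ref{Lemma-H-Finite-Dim} only exploit the analyticity of the semigroup generated by $-A^*$ together with the spectral structure of $A$; they do not use the small-time null-controllability assumption except to ensure that the minimizer has a well-defined trajectory on some interior interval $(0,T_n)$. Since here the minimizer $Z_T$ already lies in $H$, this technicality disappears and the proofs of these lemmas apply verbatim. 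They yield that either the measure of $I$ is zero, or $B_1^* Z$ and $B_2^* Z$ both vanish identically on $(0,T)$; in the latter case, unique continuation again forces $Z_T = 0$.

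Finally, if $Z_T \neq 0$ the Euler–Lagrange equation at $Z_T$ reads, for every $z_T \in H$,
\begin{equation*}
0 = \int_{I_1}\langle B_1^* Z, B_1^* z\rangle_{U_1}dt + \int_{I_2}\alpha(t)\langle B_2^* Z, B_2^* z\rangle_{U_2}dt + \varepsilon\frac{\langle Z_T, z_T\rangle_H}{\|Z_T\|_H} + \langle y_0, z(0)\rangle_H,
\end{equation*}
with $I_1, I_2$ as in Section \ref{Sec-Proof-Main}. Defining $u_1$ and $u_2$ by \eqref{Def-Controls}, the standard duality identity between \eqref{Main-Eq-u-1-2} and \eqref{Adj-Eq} gives $\langle y(T),z_T\rangle_H = -\varepsilon\langle Z_T, z_T\rangle_H/\|Z_T\|_H$ for all $z_T \in H$, so $y(T) = -\varepsilon Z_T/\|Z_T\|_H$ has norm exactly $\varepsilon$, while $u_1,u_2$ satisfy the switching condition \eqref{Def-Switching-Controls} by construction. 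If $Z_T = 0$ is a minimizer, the subdifferential condition at $0$ reduces to $\|e^{-TA}y_0\|_H \leq \varepsilon$, so the uncontrolled trajectory (taking $u_1 = u_2 = 0$) already satisfies the approximate controllability target. I expect the main obstacle to lie in the coercivity argument for $J_\varepsilon$ above, which relies on the delicate but classical interplay between unique continuation, lower semicontinuity, and weak convergence of the adjoint trajectories; once coercivity is secured, the rest of the proof is a direct transcription of the null-controllability argument with the quadratic $\|\cdot\|_{obs}^2$ term replaced by the linear $\varepsilon$-penalization.
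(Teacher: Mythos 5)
Your proposal is correct and takes essentially the same route as the paper: the paper minimizes exactly the same penalized functional $J_\varepsilon$ from \eqref{Def-J-eps} and explicitly leaves the details to the reader. The details you supply (coercivity of $J_\varepsilon$ via the normalization/unique-continuation argument, the observation that Lemmas \ref{Lemma-A-SelfAdj} and \ref{Lemma-H-Finite-Dim} apply verbatim since $Z_T\in H$, the Euler--Lagrange identity yielding $y(T)=-\varepsilon Z_T/\|Z_T\|_H$, and the subdifferential condition at $Z_T=0$) are all sound.
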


The proof of Theorem \ref{Thm-Main-App} can be done exactly similarly as the one of Theorem \ref{Thm-Main}, by minimizing instead of $J$ in \eqref{Def-J-H}, the functional $J_{\varepsilon}$ given by 
\begin{equation}
	\label{Def-J-eps}
	J_\varepsilon(z_T) = \frac{1}{2} \int_0^T \max \{ \| B_1^* z(t)\|_{U_1}^2, \alpha(t) \| B_2^* z(t)\|_{U_2}^2 \} \, dt + \varepsilon \| z_T\|_H +  \langle y_0, z(0) \rangle_H, 
\end{equation}
where $z$ is the solution of the adjoint problem \eqref{Adj-Eq}, and $\alpha = \alpha(t)$ is as in \eqref{Example-Alpha} for a suitable choice of $\omega \in \R^*$.

Details of the proof are left to the reader.

Similarly, counterparts of Theorem \ref{Thm-Main-Gal} and Theorem \ref{Thm-Ext} can also be proved in the context of approximate controllability, by penalizing the functional under consideration by the additional term $\varepsilon \| z_T\|_H$ as in \eqref{Def-J-eps}, the rest of the proof being completely similar. Precise statements and proofs are left to the reader.

\paragraph{Handling source terms.} In the proof of Theorem \ref{Thm-Main}, Theorem \ref{Thm-Main-Gal} and Theorem \ref{Thm-Ext}, we assume that system \eqref{Main-Eq-u} is null-controllable in arbitrary small times. As we said earlier, this is equivalent to say that for all $T >0$, any solution $z$ of \eqref{Adj-Eq} with initial datum $z_T \in H$ satisfies \eqref{Obs-T=0}. It is then easy to check that this property implies that for all $z_T \in H$, the solution $z$ of \eqref{Adj-Eq} satisfies 
\[
	\frac{1}{T} \int_0^T \frac{1}{C_{T-t}^2} \| z(t) \|_H^2 \, dt 
	\leq 
	\sup_{(0,T)}\left\{ \frac{1}{C_{T-t}^2} \| z(t) \|_H^2 \right\} \leq \| B^* z\|_{L^2(0,T; U)}^2, 
\]
thus entailing the existence of a positive function $\rho_T \in L^1_{loc}([0,T))$ such that 
\begin{equation}
	\label{Integrated-Obs}
	\int_0^T \rho_T(t)^2 \|z(t)\|_H^2 \, dt \leq \|B^* z\|_{L^2(0,T; U)}^2. 
\end{equation}
{Besides, easy considerations allow to show that $\rho_T$ can be chosen as a strictly positive function which may degenerate to zero only as $t \to T$. }

This allows to handle source terms in the control problems corresponding to \eqref{Main-Eq-u}. For simplicity, as before, we only focus on the counterpart of Theorem \ref{Thm-Main}, since the counterparts of Theorems \ref{Thm-Main-Gal} and \ref{Thm-Ext} can be done similarly. 

\begin{theorem}
	\label{Thm-Main-source}
	Let us assume $A: \mathscr{D}(A) \subset H \to H$ is a self-adjoint positive definite operator with compact resolvent, $H$ being a Hilbert space.

	Let $B \in \mathscr{L}(U,H)$, where $U$ is an Hilbert space, and assume that $U$ is isomorphic to $U_1 \times U_2$ for some Hilbert spaces $U_1$ and $U_2$, and define $B_1$ and $B_2$ as in \eqref{Def-B1-B2}.

	Assume that system \eqref{Main-Eq-u} is null-controllable in arbitrary small times and satisfies the observability inequality \eqref{Integrated-Obs} for some functions $(\rho_T)_{T>0}$ {a.e.} strictly positive with $\rho_T \in L^1_{loc}(0,T)$.

	Then given any $T>0$, any $y_0 \in H$ and $f \in L^2(0,T; H)$ satisfying
	\begin{equation}
		\label{Integrability-Cond}
		\int_0^T \frac{1}{\rho_T(t)^2} \| f(t) \|_H^2 \, dt < \infty, 
	\end{equation}
	there exist control functions $u_1 \in L^2(0,T; U_1)$ and $u_2 \in L^2(0,T; U_2)$ such that the solution $y$ of 
	\begin{equation}
		\label{Main-Eq-u-1-2-with-f}
		y' + A y = B_1 u_1+ B_2 u_2 + f, \quad {t \in (0,T)},  \qquad y(0) = y_0,  
	\end{equation}
	satisfies \eqref{Null-Cont-Req} while the control functions satisfy the switching condition \eqref{Def-Switching-Controls}.
\end{theorem}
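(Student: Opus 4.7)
The strategy is to mimic the proof of Theorem \ref{Thm-Main}, replacing the functional $J$ of \eqref{Def-J-H} by its natural modification that accounts for the source term $f$ via duality. Concretely, for $z_T \in H$ and $z$ solving the adjoint equation \eqref{Adj-Eq}, we consider
\begin{equation*}
    J_f(z_T) = \frac{1}{2} \int_0^T \max \{ \| B_1^* z(t)\|_{U_1}^2, \alpha(t) \| B_2^* z(t)\|_{U_2}^2 \} \, dt + \langle y_0, z(0) \rangle_H + \int_0^T \langle f(t), z(t) \rangle_H \, dt,
\end{equation*}
with $\alpha$ as in \eqref{Example-Alpha} and $\omega \in \R \setminus \{0\}$. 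The first task is to extend $J_f$ by continuity to the space $X$ defined in \eqref{Def-X-Space}. The only new term compared to \eqref{Def-J-H} is the one involving $f$, which we bound by Cauchy--Schwarz combined with the integrated observability inequality \eqref{Integrated-Obs}:
\begin{equation*}
    \left| \int_0^T \langle f(t), z(t) \rangle_H \, dt \right|
    \leq
    \left( \int_0^T \frac{\|f(t)\|_H^2}{\rho_T(t)^2} \, dt \right)^{1/2}
    \left( \int_0^T \rho_T(t)^2 \|z(t)\|_H^2 \, dt \right)^{1/2}
    \leq C_{f,T} \, \| z_T\|_{obs},
\end{equation*}
where $C_{f,T}$ is finite by assumption \eqref{Integrability-Cond}. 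This shows that $J_f$ extends to a continuous, convex functional on $X$; moreover the quadratic part still dominates the lower-order terms, so $J_f$ remains coercive on $X$ and therefore admits a minimizer $Z_T \in X$.

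The next step is to exploit the minimizer. Since the adjoint equation and the switching set $I$ defined in \eqref{Def-I-Switching-Times} are unchanged by the presence of $f$, Lemma \ref{Lemma-A-SelfAdj} applies verbatim: either $B_1^* Z \equiv 0$ and $B_2^* Z \equiv 0$, or $I$ has zero measure. Assuming the non-degenerate case, define $I_1$ and $I_2$ exactly as in the proof of Theorem \ref{Thm-Main} and the controls $u_1$, $u_2$ by \eqref{Def-Controls}. Writing the Euler--Lagrange equation for $J_f$ gives the modified identity
\begin{equation*}
    0
    =
    \int_{I_1} \langle B_1^* Z,  B_1^* z \rangle_{U_1} \, dt
    +
    \int_{I_2} \alpha \langle B_2^* Z,  B_2^* z \rangle_{U_2} \, dt
    +
    \langle y_0, z(0) \rangle_H
    +
    \int_0^T \langle f(t), z(t) \rangle_H \, dt,
\end{equation*}
valid for every $z_T \in H$. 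Combining this with the duality identity $\langle y(T), z_T \rangle_H = \langle y_0, z(0) \rangle_H + \int_0^T \langle B_1 u_1 + B_2 u_2 + f, z \rangle \, dt$ for the solution $y$ of \eqref{Main-Eq-u-1-2-with-f} yields $\langle y(T), z_T \rangle_H = 0$ for every $z_T \in H$, whence $y(T) = 0$; by construction $u_1, u_2$ satisfy the switching constraint \eqref{Def-Switching-Controls}.

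It remains to handle the degenerate case where the minimizer forces $B_1^* Z$ and $B_2^* Z$ to vanish identically (in particular the case $Z_T = 0$). Testing $J_f$ at $\lambda z_T$ for small $\lambda$ and using optimality shows that $\langle y_0, z(0) \rangle_H + \int_0^T \langle f, z \rangle_H \, dt = 0$ for every $z_T \in X$, which through the same duality identity with $u_1 = u_2 = 0$ gives $\langle y(T), z_T \rangle_H = 0$ for all $z_T \in H$, hence $y(T) = 0$; the trivial controls obviously satisfy \eqref{Def-Switching-Controls}. The main delicate point is the first one, namely verifying that the extension of $J_f$ to $X$ is well defined: this requires a careful interpretation of $\int_0^T \langle f, z \rangle_H \, dt$ for $z$ reconstructed from $z_T \in X$, using that small-time null-controllability forces $z(t) \in H$ for $t < T$ together with the weighted observability \eqref{Integrated-Obs}. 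Once this point is settled, Lemmas \ref{Lemma-A-SelfAdj} transfers directly and no new analyticity or spectral work is needed.
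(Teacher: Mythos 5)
Your proposal is correct and follows essentially the same route as the paper: the paper also minimizes the functional $J_s$ of \eqref{Def-J-source-term} (identical to your $J_f$), uses exactly the Cauchy--Schwarz estimate with \eqref{Integrated-Obs} and \eqref{Integrability-Cond} to show the source term is continuous on $X$ and does not destroy coercivity, and then reuses Lemma \ref{Lemma-A-SelfAdj} and the Euler--Lagrange/duality argument of Theorem \ref{Thm-Main} verbatim. Nothing to add.
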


Again, the proof of Theorem \ref{Thm-Main-source} can be easily adapted from the proof of Theorem \ref{Thm-Main} by minimizing, instead of the functional $J$ in \eqref{Def-J-H}, the functional $J_s$ defined for $z_T \in H$ by 
\begin{equation}
	\label{Def-J-source-term}
	J_s(z_T) = \frac{1}{2} \int_0^T \max \{ \| B_1^* z(t)\|_{U_1}^2, \alpha(t) \| B_2^* z(t)\|_{U_2}^2 \} \, dt  + \int_0^T \langle f(t), z(t) \rangle_H \, dt+  \langle y_0, z(0) \rangle_H, 
\end{equation}
where $z$ is the solution of the adjoint problem \eqref{Adj-Eq}, and $\alpha = \alpha(t)$ is as in \eqref{Example-Alpha} for a suitable choice of $\omega \in \R^*$.

The condition \eqref{Integrability-Cond} is there to guarantee that the term 
\[
	  \int_0^T \langle f(t), z(t) \rangle_H \, dt, 
\]
is well-defined in the space $X$ in \eqref{Def-X-Space}, and to preserve the coercivity of the functional $J_s$. Again, the rest of the proof of Theorem \ref{Thm-Main-source} is a verbatim copy of the one of Theorem \ref{Thm-Main} and is left to the reader.

The interest of Theorem \ref{Thm-Main-source} is that it allows to handle source terms and therefore paths the way to prove local null-controllability results with switching controls for semi-linear equations in the presence of superlinear non-linearities.

To do so, one should add suitable weights in the design of the controls. These weights can depend only in time, as in the work \cite{LiuTakahashiTucsnak} based on the knowledge of the cost of controllability in small times, or to more general weights depending on time and space variables as it occurs naturally when using Carleman estimates, see e.g. \cite{FernandezCaraGuerrero-2006, FursikovImanuvilov}.

\subsection{Open problems}\label{Subsec-Op-Pbs}
\paragraph{Time-dependent coefficients.} One of the important restrictions of our approach is that it is based on spectral decompositions of the space, and seems therefore to be strongly limited to operators which are independent of time. It is natural to discuss this property more closely. In fact, looking at our proof, it seems that the only relevant assumption should be an analytic dependence of the operators with respect to the time $t$. However, so far, this problem seems to be out of reach. 

\paragraph{Positive time of controllability.} Our arguments are limited to the case of analytic semigroups which are null-controllable in arbitrary small times, but several results have shown in the last years that there are analytic semigroups which are null-controllable only after some strictly positive critical time. This is the case for instance for the $1$-d heat equation controlled from one well-chosen point, see \cite{Dolecki-1973}, or when considering Grushin operators (see \cite{Beauchard-Darde-Erv-2017} and references therein). 

Our proofs fail to handle these cases, since we do no know how to prove that for $Z_T \in X$ (defined in \eqref{Def-X-Space}), {the function $t \mapsto B^* Z(t)$ (also $t\mapsto B_1^* Z(t)$, $t\mapsto B_2^* Z(t)$) is analytic} in time on strict subintervals of $(0,T)$, which is an essential element of our analysis in the study of the set $I$ in \eqref{Def-I-Switching-Times}.

%

%
%
\bibliographystyle{plain}

%
%
\end{document}